\newtheorem{theorem}{Theorem}[section]
\newtheorem{lemma}[theorem]{Lemma}
\theoremstyle{definition}
\theoremstyle{remark}
\newtheorem{remark}[theorem]{Remark}
\numberwithin{equation}{section}
\newcommand{\Bk}{\color{black}}
\newcommand{\bfX}{\bm X}
\newcommand{\vertiii}[1]{{\left\vert\kern-0.25ex\left\vert\kern-0.25ex\left\vert #1 
    \right\vert\kern-0.25ex\right\vert\kern-0.25ex\right\vert}}%
  \newcolumntype{R}{>{\raggedleft\arraybackslash}X}
  \newcolumntype{L}{>{\raggedright\arraybackslash}X}
  \newcolumntype{C}{>{\centering\arraybackslash}X}
\newcommand{\DG}{\mathrm{DG}}
\newcommand{\mesh}{\mathcal{E}}
\newcommand{\aelip}{a_{\mathrm{ellip}}}
\newcommand{\err}{\bm{\tilde{e}}_h^n}
\begin{document}

\title[Discontinuous Galerkin pressure correction scheme ]{A discontinuous Galerkin pressure correction scheme for the incompressible Navier--Stokes equations: Stability and convergence} 
\author{Rami Masri}
\address{Department of Computational and Applied Mathematics, Rice University, Houston, Texas 77005}
\curraddr{Department of Computational and Applied Mathematics, Rice University, Houston, Texas 77005}
\email{rami.masri@rice.edu}
\author{Chen Liu}
\address{Department of Mathematics, Purdue University, West Lafayette, Indiana 47907}
\curraddr{Department of Mathematics, Purdue University, West Lafayette, Indiana 47907}
\email{liu3373@purdue.edu}
\thanks{}

\author{Beatrice Riviere}
\address{Department of Computational and Applied Mathematics, Rice University, Houston, Texas 77005}
\curraddr{Department of Computational and Applied Mathematics, Rice University, Houston, Texas 77005}
\email{riviere@rice.edu}
\thanks{The third author is partially supported by NSF-DMS 1913291, NSF-DMS 2111459.}

\subjclass[2020]{Primary 65M12, 65M15, 65M60; Secondary 35Q30, 76D05. }

\date{\today}

\keywords{Discontinuous Galerkin, pressure correction, incompressible Navier--Stokes equations, numerical error analysis}

\begin{abstract}
A discontinuous Galerkin pressure correction numerical method for solving the incompressible Navier--Stokes equations is formulated and analyzed. We prove unconditional stability of the proposed scheme. Convergence of the discrete velocity is established by deriving a priori error estimates. Numerical results verify the convergence rates.
\end{abstract}
\maketitle
\section{Introduction}

The numerical simulation of the incompressible Navier-Stokes equations presents a challenging computational task primarily because of two  reasons: (a) the coupling of the velocity and pressure by the incompressibility constraint and (b) the nonlinearity of the convection term \cite{girault2005splitting,guermond2006overview}. The development of splitting schemes aims to overcome these difficulties by decoupling the nonlinearity in the  convection term from the pressure term. For an overview of such methods, we refer to the works of Glowinski \cite{Glowinski2003} and of Guermond, Minev, and Shen \cite{guermond2006overview}. In this paper, we will focus on pressure correction schemes. The basic idea of a non-incremental pressure correction scheme in time was first proposed by Chorin and Temam \cite{chorin1968numerical,temam1969approximation}. This scheme was subsequently modified by several mathematicians leading to two major variations: (1) the incremental scheme where a previous value of the pressure gradient is added \cite{goda1979multistep,van1986second} and (2) the rotational scheme where the non-physical boundary condition for the pressure is corrected by using the rotational form of the Laplacian \cite{timmermans1996approximate}. 

The main contribution of our work is the theoretical analysis of a discontinuous Galerkin (dG) discretization of the pressure correction approach. We derive stability and a priori error bounds on a family of regular meshes. The discrete velocities are approximated by discontinuous piecewise polynomials of degree $k_1$ and the discrete potential and pressure by polynomials of degree $k_2$. Stability of the solutions is obtained under the constraint $k_1-1\leq k_2 \leq k_1+1$ whereas the convergence of the scheme is obtained for the case $k_2 = k_1-1$ because of approximation properties. The proofs are technical and rely on several tools including special lift operators.

The semi-discrete error analysis of pressure correction schemes has been extensively studied, see for example the work by Shen and Guermond \cite{shen1996error,guermond2004error}. The use of finite element approximations in conjunction with pressure correction schemes is also well studied and a priori error estimates are established. Without being exhaustive, we refer to the work by Guermond and Quartapelle \cite{guermond1998approximation,guermond1998stability}, and to the work by Nochetto and Pyo \cite{nochetto2005gauge} for the analysis of finite element methods.

More recently, the combination of dG spatial approximations with pressure correction formulation in time to solve the incompressible Navier-Stokes equations has been the subject of several computational papers. These dG methods have multiple important features including local mass conservation and high order convergence.  In what follows, we mention a non-exhaustive list of such publications. Botti and Di Pietro employ a dG approximation to the velocity and a continuous Galerkin approximation to the pressure \cite{botti2011pressure}. 
 Liu et al. formulate an interior penalty dG method with the pressure correction approach for the computational estimation of rock permeability \cite{liu2019interior}. Piatkowski et al.  use a modified upwind scheme based on Vijayasundaram numerical flux, postprocess the projected velocity so that it is discretely divergence free, and perform numerical experiments of large scale 3D problems \cite{piatkowski2018stable}. We also mention the work by Fehn et al. where the stability of pressure correction and velocity correction dG methods is numerically investigated for small time step sizes, and the robustness of the methods is demonstrated for laminar high Reynolds number flow problems \cite{fehn2017stability}. 

Error analysis of dG methods for the \textit{steady state} Navier-Stokes equations has been established, see the work by Girault, Riviere and Wheeler \cite{girault2005discontinuous}. The authors also prove a priori error estimates for the time dependent Navier-Stokes equations where an operator splitting scheme is combined with dG methods in \cite{girault2005splitting}. To the best of our knowledge, the a  priori error analysis for a pressure correction dG approximation of the incompressible Navier-Stokes equations is missing from the literature.  

The outline of this paper is as follows. In Section \ref{sec:model_problem}, the model problem and the splitting scheme in time are presented. Section \ref{sec:dg_forms} introduces the dG forms and summarizes their properties. We present the fully discrete scheme and show existence and uniqueness of solutions in Section \ref{sec:numerical_scheme}. Unconditional stability is established in Section \ref{sec:stability}, and convergence of the discrete velocity is shown in Section~\ref{sec:convergence_velocity}. Conclusions follow. 

\section{Model problem and time discretization}\label{sec:model_problem}
Consider the incompressible time dependent Navier-Stokes equations with homogenous Dirichlet boundary condition for the velocity and a zero average constraint for the pressure. 
\begin{alignat}{2}
\partial_t \bm{u}  - \mu \Delta \bm{u}+ \bm{u}\cdot \nabla \bm{u}+ \nabla p &= \bm{f},  && \quad\mathrm{ in} \,\, \Omega \times (0,T], \label{eq:first_eq_NS}\\ 
\nabla \cdot \bm{u} & = 0, &&\quad \mathrm{in} \,\, \Omega \times (0,T], \\  
\bm{u} & = \bm{u}^0,  &&\quad \mathrm{in} \,\, \Omega \times \{0\}, \\ 
\bm{u} & = \bm{0},&& \quad \mathrm{on} \,\, \partial \Omega \times (0,T], 
\label{eq:dirichlet_bc}   \\ 
\int_{\Omega} p & =0, && \quad \forall t \in (0,T]. \label{eq:zero_avg}
\end{alignat}
Here, $\bm{u}$ is the fluid velocity, $p$ is the pressure, $\bm{f}$ is the external force, $\Omega$ is an open bounded polyhedral domain in $\mathbbm{R}^d$ where $d \in \{2,3\}$, and $\mu$ is the positive constant viscosity. Let $\tau$ denote the time step size and consider a uniform partition of the time interval $(0,T]$ into $N_T$ subintervals. Throughout the paper, we use the notation $\bm{g}^n = \bm{g}(t^n)$ and $q^n = q(t^n)$ for  given functions $\bm{g}$ and $q$ evaluated at $t^n=n\tau$. We now formulate a variation of the consistent splitting scheme that splits the operators and introduces additional velocity $\bm{v}^n$ and potential $\phi^n$ \cite{guermond2006overview,liu2019interior}. Initially, set $p^0 = 0$. For $n=1,\ldots, N_T$, given $\bm{u}^{n-1}$ and $p^{n-1}$, compute an intermediate velocity $\bm{v}^{n}$ such that 
\begin{alignat}{2}
    \bm{v}^{n}  - \tau \mu \Delta \bm{v}^{n} +\tau \bm{u}^{n-1}\cdot \nabla \bm{v}^{n} +\tau \nabla p^{n-1} &= \bm{u}^{n-1} + \tau \bm{f}^{n}, & & \quad \mathrm{ in} \, \Omega,  \label{eq:time_interm_vel} \\ 
    \bm{v}^n & = \bm{0}, & & \quad \mathrm{ on} \, \partial \Omega.
\end{alignat}
Given $\bm{v}^n$, compute $\phi^n$ such that, 
\begin{alignat}{2}
-\Delta \phi^n &= -\frac{1}{\tau}\nabla \cdot \bm{v}^n, && \quad \mathrm{ in} \, \Omega, \\ 
\nabla \phi^n \cdot \bm{n}& = 0, && \quad \mathrm{ on} \, \partial \Omega, \\
\int_\Omega \phi^n &=0.&&\Bk
\end{alignat} 
Given $(\bm{v}^n, p^{n-1}, \phi^n)$, update the pressure $p^n$ and the velocity $\bm{u}^n$ as follows. 
\begin{align}
p^n &= p^{n-1} + \phi^n - \delta \mu \nabla \cdot \bm{v}^n, \label{eq:time_update_press}\\ 
\bm{u}^n & = \bm{v}^n - \tau \nabla \phi^n. \label{eq:time_updat_vel} 
\end{align}
In \eqref{eq:time_update_press}, $\delta$ is a positive parameter that can be chosen in the interval $]0,1/(4d)]$,
as stated in the stability and convergence results proved in this work. The vector $\bm{n}$ is the unit outward normal vector
to $\partial\Omega$. 
\section{Notation and discontinuous Galerkin forms} \label{sec:dg_forms}
For a given non-negative integer $m$ and real number $r \geq 1$, define the Sobolev space on a domain $\mathcal{O} \subset \mathbbm{R}^d$: 
\begin{equation}
    W^{m,r}(\mathcal{O}) = \{ w \in L^r(\mathcal{O}):\quad \forall |\bm{\alpha}| \leq m, \quad D^{\bm{\alpha}} w \in L^r(\mathcal{O}) \}, 
\end{equation} 
where $\bm{\alpha}$ is a multi-index and $D^{\bm{\alpha}} w$ is the corresponding weak partial derivative of $w$. The usual Sobolev norms and semi-norms are denoted by  $ \| \cdot \|_{W^{m,r}(\mathcal{O})} $ and $| \cdot |_{W^{m,r}(\mathcal{O})}$ respectively. If $r=2$, we denote $H^m(\mathcal{O}) = W^{m,2}(\mathcal{O})$,  $\| \cdot \|_{H^m(\mathcal{O})} = \|\cdot\|_{W^{m,2}(\mathcal{O})}$ and $|\cdot |_{H^m(\mathcal{O})} = |\cdot |_{W^{m,2}(\mathcal{O})}$. 
The $L^2$ inner-product over $\Omega$ is denoted by $(\cdot,\cdot)$ and the resulting $L^2$ norm by $\Vert \cdot\Vert$.
Let $\mesh_h = \{E_k\}$ denote a family of regular partitions of the domain  $\Omega$ \cite{ciarlet2002finite}. That is, there exists a constant $\rho$ independent of $h$ such that
\begin{equation} \frac{h_E}{\rho_E} \leq \rho, \quad \forall E \in \mesh_h, \label{eq:mesh_regular}\end{equation}
where $h_E   = \mathrm{diam}(E)$ and $\rho_E$ is the maximum diameter of a ball inscribed in $E$. 
Let $h$ denote the maximum diameter of the elements in $\mesh_h$. 
We define the following broken Sobolev spaces.
\begin{alignat}{3}
    \bm{X} &= \{\bm{v}  \in L^2(\Omega)^d: && \quad \forall E \in \mesh_h,&& \quad \bm{v} \vert_E \in W^{2,2d/(d+1)}(E)^d  \}, \\ 
    M & = \{ q \in L^2(\Omega): && \quad \forall E \in \mesh_h, && \quad q\vert_E \in W^{1,2d/(d+1)} (E)\}.
\end{alignat}  
With this choice of spaces, we have that the trace of $\bm{v} \in \bm{X}$ and of each component of its gradient belongs to $L^2(\partial E)$ for all $E \in \mesh_h$. Similarly, the trace of $q \in M$ belongs to $L^2(\partial E)$ for all $E \in \mesh_h$. To see that this holds, we refer to Theorem 5.36 in \cite{adams2003sobolev}. In addition, we have that if $\Omega$ is a Lipschitz polygon and  $\bm{f} \in L^2(0,T; L^{2d/(d+1)}(\Omega)^d) $, then $(\bm{u},p) \in L^2(0,T; W^{2,2d/(d+1)}(\Omega)^d)  \times L^2(0,T;W^{1,2d/(d+1)}(\Omega))$ \cite{grisvard24elliptic,girault2005discontinuous,girault2012finite}. 

Denote by $\Gamma_h$ the set of all interior faces of the subdivision $\mesh_h$.  For an interior face $e \in \Gamma_h$, we associate a normal $\bm{n}_e$ and we denote 
by $E_e^1$ and $E_e^2$ the two elements that share $e$, such that $\bm{n}_e$ points from $E_e^1$ to $E_e^2$. 
Define the average and jump for a function $\bm{\theta} \in \bfX$ as such, 
\begin{align}
\{ \bm{\theta} \}  = \frac12 (\bm{\theta}|_{E_e^1} + \bm{\theta}|_{E_e^2}), \quad 
[\bm{\theta}]   = \bm{\theta}|_{E_e^1} - \bm{\theta}|_{E_e^2}, \quad \forall e = \partial E_e^1 \cap \partial E_e^2. 
\end{align}
For a boundary face, $e \in \partial \Omega$, the vector $ \bm{n}_e$ is chosen as the unit outward vector to $\partial \Omega$. The definition of the average and jump in this case are extended as such, 
\begin{equation} 
\{ \bm{\theta} \} = [\bm{\theta}] = \bm{\theta}|_{E_e}, \quad \forall e = \partial E_e \cap \partial \Omega. 
\end{equation}
Similar definitions are used for scalar valued functions, $q \in M$. We now introduce the dG formulations for the spatial discretization of \eqref{eq:time_interm_vel}-\eqref{eq:time_updat_vel}. For the convection term in \eqref{eq:time_interm_vel}, we use the same discretization form, $a_\mathcal{C}$,  as in \cite{girault2005discontinuous}.  To define this form, 
 we use the following notation: the vector $\bm{n}_E$ denotes the outward normal to $E$, the trace of a function $\bm{v}$
on the boundary of $E$ coming from the interior (resp. exterior) of $E$ is denoted by $\bm{v}^\mathrm{int}$ (resp. $\bm{v}^\mathrm{ext}$). By convention, $\bm{v}^\mathrm{ext}|_e = {\bf 0}$ if $e$ is a boundary face $(e\subset\partial\Omega)$. 
We also introduce the notation for the inflow boundary of $E$ with respect to a function $\bm{z}\in\bfX$: 
\begin{equation}
    \partial E_{-}^{\bm{z}} = \{ \bm{x} \in \partial E: \{\bm{z}  (\bm{x}) \}\cdot \bm{n}_E < 0 \}.
    \label{eq:inflow_boundary}
\end{equation}
With this notation, we define for $\bm{z}, \bm{w}, \bm{v}, \bm{\theta} \in \bfX$, 
\begin{multline}
a_\mathcal{C}(\bm{z};\bm{w}, \bm{v}, \bm{\theta}) = \sum_{E \in \mesh_h} \left( \int_E (\bm{w} \cdot \nabla \bm{v})\cdot \bm{\theta}  + \frac12 \int_E (\nabla \cdot \bm{w}) \, \bm{v} \cdot \bm{\theta} \right) \\ 
- \frac{1}{2} \sum_{e\in\Gamma_h \cup \partial \Omega} \int_{e} [\bm{w}] \cdot \bm{n}_e \{ \bm{v} \cdot \bm{\theta} \} + \sum_{E \in \mesh_h} \int_{\partial E_{-}^{\bm{z}}} | \{\bm{w}\} \cdot \bm{n}_E| (\bm{v}^{\mathrm{int}} - \bm{v}^{\mathrm{ext}}) \cdot  \bm{\theta}^{\mathrm{int}}.
\end{multline}
We recall the positivity property satisfied by $a_\mathcal{C}$ (see (1.18) in \cite{girault2005discontinuous}):
\begin{equation}\label{eq:cpositivity}
a_\mathcal{C}(\bm{w};\bm{w};\bm{v},\bm{v}) \geq 0, \quad \forall \bm{w}, \bm{v} \in \bfX.
\end{equation}
In the analysis, it will be useful to separate the upwind terms from the form $a_\mathcal{C}(\bm{w}; \bm{w},\bm{v},\bm{\theta})$. To this end, for $\bm{z}, \bm{w}, \bm{v}, \bm{\theta} \in \bm{X}$, we define:
\begin{multline}
\mathcal{C}(\bm{w}, \bm{v}, \bm{\theta}) =\sum_{E \in \mesh_h} \left( \int_E (\bm{w} \cdot \nabla \bm{v})\cdot \bm{\theta}  + \frac12 \int_E (\nabla \cdot \bm{w}) \, \bm{v} \cdot \bm{\theta} \right) 
\\ - \frac{1}{2} \sum_{e\in\Gamma_h \cup \partial \Omega} \int_{e} [\bm{w}] \cdot \bm{n}_e \{ \bm{v} \cdot \bm{\theta} \} .
\end{multline}
We also define the upwind terms
\begin{equation}
    \mathcal{U}(\bm{z}; \bm{w}, \bm{v}, \bm{\theta}) = \sum_{E \in \mesh_h } \int_{\partial E_{-}^{\bm{z}}} \{ \bm{w}\} \cdot \bm{n}_E(\bm{v}^{\mathrm{int}} - \bm{v}^{\mathrm{ext}})\cdot \bm{\theta}^{\mathrm{int}}. \label{eq:split_ac1}
\end{equation}
It follows that: 
\begin{equation}
 a_{\mathcal{C}}(\bm{w}; \bm{w}, \bm{v}, \bm{\theta}) = \mathcal{C}(\bm{w},\bm{v},\bm{\theta}) - \mathcal{U}(\bm{w};\bm{w},\bm{v},\bm{\theta}).   \label{eq:split_ac2}
\end{equation}
The discretization for the elliptic operator, $-\Delta \bm{v}$, is given as follows \cite{riviere2008discontinuous}. For $\bm{v}, \bm{\theta} \in \bfX$, 
\begin{multline}
a_{\epsilon} (\bm{v}, \bm{\theta}) = \sum_{E \in \mesh_h} \int_E \nabla \bm{v} : \nabla \bm{\theta} 
- \sum_{e\in\Gamma_h \cup \partial \Omega} \int_e \{\nabla \bm{v}\}\bm{n}_e \cdot [\bm{\theta}]  \\ 
+ \epsilon \sum_{e \in \Gamma_h \cup \partial \Omega} \int_e \{\nabla \bm{\theta} \}\bm{n}_e\cdot [\bm{v}] 
+  \sum_{e\in \Gamma_h \cup \partial \Omega} \frac{\sigma}{h_e} \int_{e} [\bm{v}]\cdot [\bm{\theta}].
\label{eq:eliptic_form_v}
\end{multline} 
In the above form, $h_e = |e|^{1/(d-1)}$, $\epsilon \in \{ -1,0,1\}$,  $\sigma > 0$ is a user specified penalty parameter.
The discretization for the term $ - \nabla p$ is given as follows. For $\bm{\theta} \in \bfX$ and $q \in M$, define  
\begin{equation}
b(\bm{\theta}, q) = \sum_{E \in \mesh_h }\int_E (\nabla \cdot \bm{\theta}) \, q  - \sum_{e \in \Gamma_h \cup \partial \Omega} \int_e \{q\}[\bm{\theta}]\cdot \bm{n}_e.  \label{eq:form_b} 
\end{equation}
To approximate $\bm{u}$ and $p$, we introduce discrete function spaces $\bm{X}_h \subset \bm{X}$ and  $M_{h0}\subset M_h \subset M $. For any integers $k_1\geq 1, k_2\geq 0$,
\begin{alignat}{2}
\bfX_h &= \{ \bm{v}_h \in (L^2(\Omega))^d:\quad &&\forall E \in \mesh_h,\quad \bm{v}_h \vert_{E} \in (\mathbbm{P}_{k_1}(E))^d\}, \\
M_h &= \{ q_h \in L^2(\Omega):\quad  && \forall E \in \mesh_h, 
\quad q_h \vert_E \in \mathbbm{P}_{k_2}(E)\}, \\
M_{h0}&=\{ q_h \in M_h: \quad && \int_\Omega q_h = 0\}. 
\end{alignat}
In the above, for $k \in \mathbbm{N}$, $\mathbbm{P}_k(E)$ denotes the space of polynomials of degree at most $k$. 
We will assume that (for reasons that will be evident in the analysis)
$$k_1 -1 \leq k_2 \leq k_1+1.  $$  
To discretize the elliptic operator $-\Delta \phi$, we define for $\phi_h, q_h\in M_h$,
\begin{multline}
    \aelip(\phi_h,q_h) = \sum_{E \in \mesh_h} \int_E \nabla \phi_h \cdot \nabla q_h - \sum_{e \in \Gamma_h} \int_e \{\nabla \phi_h \}\cdot \bm{n}_e [q_h] \\ 
   -  \sum_{e\in \Gamma_h} \int_e \{ \nabla q_h\}\cdot \bm{n}_e [\phi_h] +  \sum_{e\in \Gamma_h} \frac{\tilde{\sigma}}{h_e} \int_e [\phi_h] [q_h]. 
\end{multline}
Here, $\tilde{\sigma}>0$ is a penalty parameter. For $\bm{\theta}\in \bfX$, define the energy norm as follows: 
\begin{equation}
    \| \bm{\theta}\|^2_{\DG} = \sum_{E \in \mesh_h}  \|\nabla \bm{\theta}  \|^2_{L^2(E)} + \sum_{e \in \Gamma_h \cup \partial \Omega } \frac{\sigma}{h_e} \|[\bm{\theta}]\|^2_{L^2(e)}.
  \end{equation}
  For $q \in M$, the energy semi-norm is defined as such: 
  \begin{equation}
    \vert q_h \vert^2_{\DG} = \sum_{E \in \mesh_h}  \|\nabla q \|^2_{L^2(E)} 
+ \sum_{e \in \Gamma_h } \frac{\tilde{\sigma}}{h_e} \|[q]\|^2_{L^2(e)}.
  \end{equation}
 Clearly, $\vert \cdot \vert_{\DG}$ is a norm for the space $M_{h0}$. 
We recall the following coercivity properties.
\begin{alignat}{2}
    a_\epsilon (\bm{\theta}_h, \bm{\theta}_h) &\geq \kappa \|\bm{\theta}_h \|_{\DG}^2, && \quad  \forall \bm{\theta}_h \in \bm{X}_h, \label{eq:coercivity_a_epsilon} \\
    \aelip(q_h, q_h ) &\geq \frac12 | q_h |^2_{\DG}, &&  \quad \forall q_h \in M_h.  \label{eq:coercivity_a_ellip} 
\end{alignat}
It is shown that \eqref{eq:coercivity_a_epsilon} holds with $\kappa = 1$ if $\epsilon = 1$ and with $\kappa= 1/2$ if $\epsilon \in \{ -1, 0 \}$ and $\sigma$ is large enough \cite{riviere2008discontinuous}. Property \eqref{eq:coercivity_a_ellip} holds for $\tilde{\sigma}$ large enough \cite{riviere2008discontinuous}.  In what follows, we will assume that \eqref{eq:coercivity_a_ellip} and \eqref{eq:coercivity_a_epsilon} are satisfied. Further, we recall that $a_{\epsilon}$ is continuous on $\bm{X}_h \times \bm{X}_h$ with respect to  $\| \cdot \|_{\DG}$ and  $\aelip$ is continuous on $M_h \times M_h$ with respect to  $|\cdot |_{\DG}$ \cite{riviere2008discontinuous}. There exists a constant $C$ independent of $h$, $\bm{\theta}_h$, $\bm{z}_h$, $q_h$, and $\xi_h$ such that   
\begin{alignat}{2}
a_\epsilon(\bm{\theta}_h, \bm{z}_h) &\leq C \| \bm{\theta}_h \|_{\DG} \, \|\bm{z}_h\|_{\DG}, && \quad \forall \bm{\theta}_h, \bm{z}_h \in \bm{X}_h, \label{eq:continuity_a_epsilon}\\ 
\aelip(q_h, \xi_h) & \leq C | q_h |_{\DG} \, |\xi_h |_{\DG},  && \quad \forall q_h, \xi_h \in M_h  \label{eq:continuity_aellip}. 
\end{alignat}
We now state an equivalent expression for the form $b$.   
\begin{lemma}
We have the following equivalent form for $b(\bm{\theta}, q)$, 
\begin{equation}
    b(\bm{\theta}, q ) = -\sum_{E \in \mesh_h} \int_E \bm{\theta} \cdot \nabla q  + \sum_{e\in \Gamma_h} \int_e \{ \bm{\theta} \}\cdot \bm{n}_e [q], \quad \forall (\bm{\theta},q) \in \bm{X} \times M.  \label{eq:equiv_form_b}
\end{equation} 
\end{lemma}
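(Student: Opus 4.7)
The plan is to apply elementwise integration by parts to the first term in the definition of $b(\bm{\theta},q)$ and then carefully rewrite the resulting sum of face contributions using jump-average algebra. Since $\bm{\theta}\in \bm{X}$ and $q \in M$ have their traces on $\partial E$ in $L^2(\partial E)$, integration by parts is legitimate on each element.

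First, I would write, for each $E \in \mesh_h$,
\begin{equation*}
\int_E (\nabla\cdot \bm{\theta})\, q = -\int_E \bm{\theta}\cdot \nabla q + \int_{\partial E} (\bm{\theta}\cdot \bm{n}_E)\, q,
\end{equation*}
and sum over $E$. The sum of boundary integrals is then reorganized as a sum over faces. For an interior face $e = \partial E_e^1 \cap \partial E_e^2$ with $\bm{n}_e$ pointing from $E_e^1$ to $E_e^2$, the two contributions combine into $\int_e \bigl((\bm{\theta}|_{E_e^1}\cdot \bm{n}_e) q|_{E_e^1} - (\bm{\theta}|_{E_e^2}\cdot \bm{n}_e) q|_{E_e^2}\bigr)$. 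The key algebraic identity, verified by direct expansion, is
\begin{equation*}
a_1 b_1 - a_2 b_2 = \{a\}[b] + [a]\{b\}, \qquad \{x\}=\tfrac12(x_1+x_2),\ [x]=x_1-x_2,
\end{equation*}
applied with $a_i = \bm{\theta}|_{E_e^i}\cdot \bm{n}_e$ and $b_i = q|_{E_e^i}$. Since $\bm{n}_e$ is constant across $e$, this yields
\begin{equation*}
\int_e \{\bm{\theta}\}\cdot \bm{n}_e\, [q] + \int_e \{q\}\,[\bm{\theta}]\cdot \bm{n}_e
\end{equation*}
on each interior face.

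For a boundary face $e \subset \partial \Omega$, there is a single element $E_e$ and $\bm{n}_E = \bm{n}_e$; by the extension convention, $\{\bm{\theta}\}=[\bm{\theta}]=\bm{\theta}|_{E_e}$ and $\{q\}=[q]=q|_{E_e}$, so $\int_{\partial E \cap \partial\Omega}(\bm{\theta}\cdot \bm{n}_E)\, q = \int_e \{q\}[\bm{\theta}]\cdot \bm{n}_e$, and equivalently equals $\int_e \{\bm{\theta}\}\cdot \bm{n}_e [q]$. Assembling all face contributions gives
\begin{equation*}
\sum_{E}\int_E (\nabla\cdot\bm{\theta})\,q = -\sum_{E}\int_E \bm{\theta}\cdot\nabla q + \sum_{e \in \Gamma_h}\int_e \{\bm{\theta}\}\cdot \bm{n}_e\,[q] + \sum_{e\in \Gamma_h \cup \partial\Omega}\int_e \{q\}[\bm{\theta}]\cdot\bm{n}_e.
\end{equation*}
Subtracting the face sum that appears in the definition of $b$ leaves exactly the claimed identity \eqref{eq:equiv_form_b}.

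There is no real obstacle here; the only subtle point is bookkeeping on boundary faces, making sure that the conventions $\{\cdot\}=[\cdot] = (\cdot)|_{E_e}$ and $\bm{n}_e=\bm{n}_E$ are applied consistently so that the boundary contribution matches the $\partial\Omega$ piece of the face sum in the original definition of $b$, allowing its cancellation. Once this is handled, the identity follows purely from the magic formula for jumps and averages.
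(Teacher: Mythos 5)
Your proof is correct and follows essentially the same route as the paper: elementwise Green's theorem on the volume term followed by the jump--average identity $a_1b_1 - a_2b_2 = \{a\}[b]+[a]\{b\}$ on interior faces, with the boundary-face contribution assigned to the $\{q\}[\bm{\theta}]\cdot\bm{n}_e$ piece so that it cancels against the face sum in the definition of $b$. No issues.
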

\begin{proof}
The result is obtained by applying Green's theorem to the first term in \eqref{eq:form_b} and by using the identity
\begin{eqnarray}
    \sum_{e \in \Gamma_h} \int_{e} [\bm{\theta}\cdot \bm{n}_e q ] = \sum_{e\in \Gamma_h} \int_{e} \{\bm{\theta} \}\cdot \bm{n}_e [q] + \sum_{e \in \Gamma_h} \int_e \{q\} [\bm{\theta}]\cdot \bm{n}_e .
\end{eqnarray}
\end{proof} 
We will also make use of lift operators, which are useful tools in the theoretical analysis of dG methods.  
Given $e \in \Gamma_h \cup \partial \Omega$, we introduce a new lift operator $r_e: (L^2(e))^d \rightarrow M_h$ as follows 
\begin{equation}
\int_{\Omega} r_e(\bm{\zeta}) q_h  = \int_e \{q_h\} \bm{\zeta} \cdot \bm{n}_e, \quad \forall q_h \in M_h.   \label{eq:def_re}
\end{equation}
We next recall the lift operator $\bm{g}_e$ introduced in \cite{di2010discrete}: given an interior face $e\in \Gamma_h$, the operator  $\bm{g}_e: L^2(e) \rightarrow \bfX_h$ satisfies  
\begin{equation}
\int_{\Omega} \bm{g}_e(\zeta) \cdot \bm{\theta}_h = \int_e \{ \bm{\theta}_h \} \cdot \bm{n}_e \zeta, \quad \forall \bm{\theta}_h \in \bfX_h. \label{eq:def_ge}
\end{equation}
With these definitions, we  construct two operators, $R_h: \bfX_h\rightarrow M_h$ and $\bm{G}_h: M_h\rightarrow \bfX_h$ 
\begin{alignat}{2}
    R_h([\bm{\theta}_h]) &= \sum_{e\in \Gamma_h \cup \partial \Omega} r_e([\bm{\theta}_h]),&& \quad \bm{\theta}_h \in \bfX_h,   \label{eq:lift_op_1}\\ 
    \bm{G}_h([\beta_h]) &= \sum_{e\in \Gamma_h} \bm{g}_e([\beta_h]),&& \quad \beta_h \in M_h. \label{eq:lift_op_2} 
\end{alignat}
It is easy to check that $R_h$ and $\bm{G}_h$ are linear operators. We next show boundedness of the lift operators. 
\begin{lemma}\label{lemma:properties_lift_operators}
    There exist constants $M_{k_2}, \tilde{M}_{k_1} > 0$ independent of $h$ but depending on the polynomial degrees $k_2$ and $k_1$ 
respectively, such that the following bounds hold: 
       \begin{align}
       \|R_h([\bm{\theta}_h])\|&\leq M_{k_2} \left(\sum_{e \in \Gamma_h \cup \partial \Omega} h_e^{-1}\|[\bm{\theta}_h]\|_{L^2(e)}^2\right)^{1/2}, \quad \forall \bm{\theta}_h \in \bfX_h, \label{eq:lift_prop_r} \\ 
       \|\bm{G}_h([q_h])\| &\leq \tilde{M}_{k_1} \left(\sum_{e \in \Gamma_h} h_e^{-1}\|[q_h]\|_{L^2(e)}^2\right)^{1/2}, \quad\forall q_h\in M_h. \label{eq:lift_prop_g}
       \end{align}
   \end{lemma}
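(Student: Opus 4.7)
The plan is to establish each bound by a standard duality argument combined with a discrete trace inequality. I will carry out the argument for $R_h$ in detail; the argument for $\bm{G}_h$ is entirely analogous, with the roles of scalar and vector spaces swapped and with summation over interior faces only.

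First, since $R_h([\bm{\theta}_h]) \in M_h$, I test the defining relation \eqref{eq:def_re}--\eqref{eq:lift_op_1} against $q_h := R_h([\bm{\theta}_h])$ itself to obtain
\begin{equation*}
\|R_h([\bm{\theta}_h])\|^2 \;=\; \sum_{e \in \Gamma_h \cup \partial\Omega} \int_e \{R_h([\bm{\theta}_h])\} \, [\bm{\theta}_h]\cdot \bm{n}_e.
\end{equation*}
Next, I apply Cauchy--Schwarz face by face, introducing a factor of $h_e^{1/2} h_e^{-1/2}$ in order to match the weighted norm in the statement of the lemma:
\begin{equation*}
\|R_h([\bm{\theta}_h])\|^2 \;\le\; \Bigl(\sum_{e} h_e \|\{R_h([\bm{\theta}_h])\}\|_{L^2(e)}^2\Bigr)^{1/2} \Bigl(\sum_{e} h_e^{-1} \|[\bm{\theta}_h]\|_{L^2(e)}^2\Bigr)^{1/2}.
\end{equation*}

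The key ingredient is now a discrete trace inequality for polynomials of degree $k_2$ on a shape-regular mesh: there exists a constant $C_{k_2}$ depending on $k_2$ and on the mesh regularity constant $\rho$, but not on $h$, such that for every $E \in \mesh_h$ and every face $e \subset \partial E$,
\begin{equation*}
\|q_h\|_{L^2(e)}^2 \;\le\; C_{k_2} \, h_e^{-1} \|q_h\|_{L^2(E)}^2, \qquad \forall q_h \in \mathbbm{P}_{k_2}(E).
\end{equation*}
Applying this to $R_h([\bm{\theta}_h])$ on each of the (at most two) elements adjacent to $e$, using the definition of the average, and summing over all faces, shape regularity ensures that each element $E$ appears only a bounded number of times (once per face), so
\begin{equation*}
\sum_{e \in \Gamma_h \cup \partial\Omega} h_e \|\{R_h([\bm{\theta}_h])\}\|_{L^2(e)}^2 \;\le\; C_{k_2}' \sum_{E \in \mesh_h} \|R_h([\bm{\theta}_h])\|_{L^2(E)}^2 \;=\; C_{k_2}' \|R_h([\bm{\theta}_h])\|^2.
\end{equation*}
Substituting back and dividing by $\|R_h([\bm{\theta}_h])\|$ yields \eqref{eq:lift_prop_r} with $M_{k_2} = (C_{k_2}')^{1/2}$.

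The argument for $\bm{G}_h$ proceeds identically: test \eqref{eq:def_ge}--\eqref{eq:lift_op_2} against $\bm{\theta}_h := \bm{G}_h([q_h])$, apply face-wise Cauchy--Schwarz, and invoke the discrete trace inequality on the components of $\bm{G}_h([q_h]) \in (\mathbbm{P}_{k_1}(E))^d$, summing only over interior faces $\Gamma_h$. The main (minor) obstacle is simply bookkeeping: one must verify that shape regularity justifies turning the face-sum of element-restricted $L^2$-norms back into a global $L^2$-norm with a constant independent of $h$, and keep track of the polynomial-degree dependence through the trace constant $C_{k_2}$ (respectively $C_{k_1}$). No other nontrivial step is required.
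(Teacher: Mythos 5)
Your proposal is correct and follows essentially the same route as the paper: square the $L^2$ norm, use the defining relation of the lift to convert it into a sum of face integrals of $\{R_h([\bm{\theta}_h])\}[\bm{\theta}_h]\cdot\bm{n}_e$, apply weighted Cauchy--Schwarz, and absorb the average terms via a local discrete trace inequality together with shape regularity. The only cosmetic difference is that the paper weights the Cauchy--Schwarz step by $h_{E_e^i}^{\pm 1/2}$ and then invokes $h_E \geq h_e$, whereas you weight directly by $h_e^{\pm 1/2}$; both are equivalent on a regular mesh.
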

   \begin{proof}
We will show \eqref{eq:lift_prop_r}. We use the definitions of $R_h$ \eqref{eq:lift_op_1} and of $r_e$ \eqref{eq:def_re}. We have 
\begin{align*}
& \|R_h([\bm{\theta}_h]) \|^2 = \int_{\Omega} R_h([\bm{\theta}_h])R_h([\bm{\theta}_h])  
= \sum_{e\in\Gamma_h \cup \partial \Omega} \int_{\Omega} r_e([\bm{\theta}_h]) R_h([\bm{\theta}_h]) \\ 
 & = \sum_{e\in \Gamma_h } \int_{e} \frac{1}{2} \left(R_h([\bm{\theta}_h])\vert_{E_e^1} + R_h([\bm{\theta}_h])\vert_{E_e^2}\right)[\bm{\theta}_h]\cdot \bm{n}_e + \sum_{e\in \partial \Omega} \int_e R_h([\bm{\theta}_h])\vert_{E_e} [\bm{\theta}_h]\cdot \bm{n}_e.  
           \end{align*}
           Cauchy-Schwarz's inequality yields
           \begin{align*}
            & \|R_h([\bm{\theta}_h]) \|^2 \leq \frac{1}{2}\left( \sum_{e\in \Gamma_h} h_{E_e^1} \|R_h([\bm{\theta}_h]) \vert_{E_e^1}\|^2_{L^2(e)} \right)^{1/2} \left(\sum_{e\in \Gamma_h} h_{E_e^1}^{-1} \|[\bm{\theta}_h ]\cdot \bm{n}_e\|^2_{L^2(e)}\right)^{1/2} \\ & + \frac{1}{2}\left( \sum_{e\in \Gamma_h} h_{E_e^2} \|R_h([\bm{\theta}_h]) \vert_{E_e^2}\|^2_{L^2(e)} \right)^{1/2} \left(\sum_{e\in \Gamma_h} h_{E_e^2}^{-1} \|[\bm{\theta}_h ]\cdot \bm{n}_e\|^2_{L^2(e)}\right)^{1/2}  \\ 
            &+ \left( \sum_{e\in \partial \Omega} h_{E_e} \|R_h([\bm{\theta}_h]) \vert_{E_e}\|^2_{L^2(e)} \right)^{1/2} \left(\sum_{e\in \partial \Omega} h_{E_e}^{-1} \|[\bm{\theta}_h] \cdot \bm{n}_e\|^2_{L^2(e)}\right)^{1/2}.
           \end{align*}
           With a local trace inequality and the fact that $h_{E} \geq h_e$ for any element $E$ having a face $e$, we obtain the result  with $M_{k_2}$ depending on a trace constant. The proof of \eqref{eq:lift_prop_g} follows a similar argument and is omitted for brevity.
   \end{proof}
Further, with the definitions of the lift operators \eqref{eq:lift_op_1} and \eqref{eq:lift_op_2}, we have the following equivalent forms to \eqref{eq:form_b} and \eqref{eq:equiv_form_b} respectively.
\begin{align}
    b(\bm{\theta}_h, q_h) & = ( \nabla_h   \cdot \bm{\theta}_h, q_h ) - (R_h([\bm{\theta}_h]), q_h), 
\quad \forall \bm{\theta}_h\in\bfX_h, \, \forall q_h \in M_h, \label{eq:def_b_lift} \\ 
    b(\bm{\theta}_h, q_h) & = -(\nabla_h q_h, \bm{\theta}_h) + (\bm{G}_h([q_h]), \bm{\theta}_h),
\quad \forall \bm{\theta}_h\in\bfX_h, \, \forall q_h \in M_h, \label{eq:def_b_lift_2}
 \end{align}
where $\nabla_h$ and $\nabla_h \cdot$ denote the broken gradient and divergence operators respectively.
\section{Numerical scheme}\label{sec:numerical_scheme}
We start with setting $p_h^0 = \phi_h^0 = 0$. We let $\bm{u}_h^0$ be the local $L^2$ projection of $\bm{u}^0$ onto $\bm{X}_h$. 
\begin{equation}
\int_{E} (\bm{u}_h^0 - \bm{u}^0) \cdot \bm{\theta}_h = 0, \quad \forall \bm{\theta}_h \in  (\mathbbm{P}_{k_1}(E))^d, \quad\forall E\in\mathcal{E}_h.
\end{equation} 
For $n =1, \ldots, N_T$, given $(\bm{u}_h^{n-1}, p_h^{n-1}) \in \bfX_h \times  M_h$ compute $\bm{v}_h^n \in \bfX_h$ such that for all $\bm{\theta}_h \in \bfX_h$, 
\begin{multline}
    (\bm{v}_h^n, \bm{\theta}_h) + \tau a_{\mathcal{C}}(\bm{u}_h^{n-1};\bm{u}_h^{n-1}, \bm{v}_h^n, \bm{\theta}_h) + \tau \mu a_\epsilon(\bm{v}_h^n, \bm{\theta}_h ) = (\bm{u}_h^{n-1}, \bm{\theta}_h) \\ + \tau b(\bm{\theta}_h,p_h^{n-1}) + \tau (\bm{f}^{n}, \bm{\theta}_h). \label{eq:intermidiate_velocity}
\end{multline}
Next, compute $\phi_h^n \in  M_{h0} $ such that for all $q_h \in M_{h0}$,
\begin{equation}
a_{\mathrm{ellip}}(\phi_h^n, q_h) = - \frac{1}{\tau} b(\bm{v}_h^n, q_h). \label{eq:pressure_correction}
\end{equation} 
Finally, compute $p_h^n \in M_h$ and  $\bm{u}_h^n \in \bfX_h$ such that for all $q_h \in M_h$ and for all  $\bm{\theta}_h \in \bfX_h$, 
\begin{align}
    (p_h^n, q_h) & = (p_h^{n-1}, q_h) + (\phi^n_h, q_h) - \delta \mu \, b(\bm{v}_h^n, q_h), \label{eq:update_pressure_1} \\ 
    (\bm{u}_h^n, \bm{\theta}_h) & = (\bm{v}_h^n, \bm{\theta}_h) + \tau b(\bm{\theta}_h, \phi_h^n).\label{eq:update_velocity_1}
    \end{align}
    Using \eqref{eq:def_b_lift} and \eqref{eq:def_b_lift_2},  steps \eqref{eq:update_pressure_1} and \eqref{eq:update_velocity_1} can be written as follows. For all $q_h \in M_h$ and $\bm{\theta}_h \in \bm{X}_h$, 
\begin{align}
(p_h^n, q_h) & = (p_h^{n-1}, q_h) + (\phi^n_h, q_h) - \delta \mu (\nabla_h \cdot \bm{v}_h^n - R_h([\bm{v}_h^n]), q_h), \label{eq:update_pressure} \\ 
(\bm{u}_h^n, \bm{\theta}_h) & = (\bm{v}_h^n, \bm{\theta}_h) - \tau(\nabla_h \phi_h^n- \bm{G}_h([\phi^n_h]), \bm{\theta}_h).\label{eq:update_velocity}
\end{align}
    \begin{lemma}\label{prep:pressure_in_Mh0} 
        Let $n \geq 0$ and $p_h^n \in M_h$ be defined by \eqref{eq:update_pressure}, then $p_h^n \in M_{h0}$. 
    \end{lemma}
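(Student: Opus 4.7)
The plan is to prove this by induction on $n$, using the defining update equation \eqref{eq:update_pressure_1} tested against the constant function $q_h = 1$, which lies in $M_h$ since $\mathbbm{P}_{k_2}(E)$ contains constants for $k_2 \geq 0$.

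For the base case, $p_h^0 = 0$ by definition, so trivially $\int_\Omega p_h^0 = 0$.

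For the inductive step, assume $p_h^{n-1} \in M_{h0}$, i.e., $\int_\Omega p_h^{n-1} = 0$. Taking $q_h = 1$ in \eqref{eq:update_pressure_1} gives
\[
\int_\Omega p_h^n = \int_\Omega p_h^{n-1} + \int_\Omega \phi_h^n - \delta\mu \, b(\bm{v}_h^n, 1).
\]
The first term vanishes by the inductive hypothesis, and the second vanishes because $\phi_h^n \in M_{h0}$ by construction in \eqref{eq:pressure_correction}. The only nontrivial step is to verify that $b(\bm{v}_h^n, 1) = 0$. For this I would invoke the equivalent form \eqref{eq:equiv_form_b}: since $\nabla 1 = 0$ on each element and $[1] = 0$ on every interior face, both sums in \eqref{eq:equiv_form_b} are zero, hence $b(\bm{v}_h^n, 1) = 0$.

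There is no real obstacle here; the result is essentially a consistency check that the pressure update preserves the zero-mean constraint. The only point requiring any attention is selecting the right representation of the form $b$: evaluating the original definition \eqref{eq:form_b} directly would require cancelling the divergence integrals against the face averages of $\bm{v}_h^n$ on $\partial\Omega$, whereas the equivalent form \eqref{eq:equiv_form_b} makes the vanishing immediate.
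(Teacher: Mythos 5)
Your proof is correct and follows essentially the same route as the paper: induction on $n$, testing \eqref{eq:update_pressure_1} with $q_h=1$, and using the gradient-based representation of $b$ to see that $b(\bm{v}_h^n,1)=0$ (the paper invokes \eqref{eq:def_b_lift_2}, you invoke the equivalent form \eqref{eq:equiv_form_b}; these are the same identity up to the lift-operator notation). No gaps.
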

\begin{proof}
We present a proof by induction on $n$. For $n=0$, the statement trivially holds. Assume $p_h^{n-1} \in M_{h0}$. Let $q_h = 1$ in \eqref{eq:update_pressure_1} and use \eqref{eq:def_b_lift_2}. We obtain:
\begin{equation}
    \int_{\Omega} p_h^n = \int_\Omega p_h^{n-1} + \int_{\Omega} \phi_h^n. 
 \end{equation} 
We conclude the result by using the induction hypothesis and the fact that $\phi_h^n \in M_{h0}$.
\end{proof}
\begin{lemma}
    Given $(\bm{v}_h^{n-1},\bm{u}_h^{n-1}, p_h^{n-1}) \in \bm{X}_h \times \bm{X}_h \times M_{h0}$, there exists a unique solution $(\bm{v}_h^n, \bm{u}_h^{n}, p_h^n) \in \bm{X}_h \times \bm{X}_h \times M_{h0}$ to the dG scheme given by \eqref{eq:intermidiate_velocity} - \eqref{eq:update_velocity}. 
\end{lemma}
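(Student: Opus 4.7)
The scheme decouples into three sequential problems, and since $\bm{X}_h$ and $M_h$ are finite-dimensional, for each linear subproblem it suffices to verify uniqueness (equivalently, coercivity of the associated bilinear form) to obtain existence as well.

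For the intermediate velocity step \eqref{eq:intermidiate_velocity}, the left-hand side is linear in $\bm{v}_h^n$ since $\bm{u}_h^{n-1}$ is given. I would define the bilinear form
\begin{equation*}
\mathcal{A}(\bm{v}_h,\bm{\theta}_h) = (\bm{v}_h,\bm{\theta}_h) + \tau a_{\mathcal{C}}(\bm{u}_h^{n-1};\bm{u}_h^{n-1},\bm{v}_h,\bm{\theta}_h) + \tau\mu a_\epsilon(\bm{v}_h,\bm{\theta}_h),
\end{equation*}
and test with $\bm{\theta}_h=\bm{v}_h$. The positivity property \eqref{eq:cpositivity} controls the convection term, and coercivity \eqref{eq:coercivity_a_epsilon} of $a_\epsilon$ yields
\begin{equation*}
\mathcal{A}(\bm{v}_h,\bm{v}_h) \geq \|\bm{v}_h\|^2 + \tau\mu\kappa\|\bm{v}_h\|_{\DG}^2.
\end{equation*}
Hence $\mathcal{A}(\bm{v}_h,\bm{v}_h)=0$ forces $\bm{v}_h=\bm{0}$, and the square linear system for $\bm{v}_h^n$ is invertible.

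For the potential step \eqref{eq:pressure_correction}, the right-hand side $-\tau^{-1}b(\bm{v}_h^n,\cdot)$ is a well-defined linear functional on $M_{h0}$ once $\bm{v}_h^n$ is known. Coercivity \eqref{eq:coercivity_a_ellip} of $\aelip$ combined with the fact, noted in the paper, that $|\cdot|_{\DG}$ is a genuine norm on $M_{h0}$ (the constant mode being excluded by the zero-mean constraint), gives unique solvability of \eqref{eq:pressure_correction} for $\phi_h^n\in M_{h0}$.

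For the update step, \eqref{eq:update_pressure_1} and \eqref{eq:update_velocity_1} are $L^2$-projection identities: since $(\cdot,\cdot)$ is positive definite on $M_h$ and on $\bm{X}_h$, the equations uniquely determine $p_h^n\in M_h$ and $\bm{u}_h^n\in\bm{X}_h$. Lemma \ref{prep:pressure_in_Mh0} already shows $p_h^n\in M_{h0}$, completing the chain. There is no serious obstacle here; the only place one has to be slightly careful is combining \eqref{eq:cpositivity} with the inertial and viscous terms to see coercivity of $\mathcal{A}$, which is why I would isolate that argument first.
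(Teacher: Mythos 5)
Your proposal is correct and follows essentially the same route as the paper: uniqueness (hence existence, by finite dimensionality) of $\bm{v}_h^n$ via the positivity \eqref{eq:cpositivity} and coercivity \eqref{eq:coercivity_a_epsilon}, unique solvability of \eqref{eq:pressure_correction} from the coercivity of $\aelip$ and the fact that $\vert\cdot\vert_{\DG}$ is a norm on $M_{h0}$, and the update steps as trivially invertible $L^2$-type identities together with Lemma \ref{prep:pressure_in_Mh0}. Phrasing the velocity step as coercivity of the bilinear form rather than as the vanishing of the difference of two solutions is only a cosmetic difference.
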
 \begin{proof}
\textit{Existence of the intermediate velocity} $\bm{v}_h^n$: Since this is a linear problem in finite dimension, it suffices to show uniqueness of the solution. Suppose there exist two solutions $\bm{v}_h^n$ and $\bm{\tilde{v}}_h^n$ to \eqref{eq:intermidiate_velocity} and let 
$\bm{\chi}^n_h$ denote the difference between the two solutions, 
\begin{equation}
    \bm{\chi}^n_h = \bm{v}_h^n - \bm{\tilde{v}}_h^n.  \nonumber
\end{equation}
Then,  recalling that $a_{\mathcal{C}}$ is linear in the third argument, and choosing $\bm{\theta}_h = \bm{\chi}_h^n$, we obtain 
\begin{equation}
(\bm{\chi}_h^n , \bm{\chi}_h^n)  + \tau a_\mathcal{C}(\bm{u}_h^{n-1}; \bm{u}_h^{n-1}, \bm{\chi}_h^n, \bm{\chi}_h^n) + \tau \mu a_\epsilon(\bm{\chi}_h^n, \bm{\chi}_h^n) = 0.  \nonumber
\end{equation} 
With the positivity property of $a_\mathcal{C}$ \eqref{eq:cpositivity} and the coercivity of $a_\epsilon$ \eqref{eq:coercivity_a_epsilon}, we conclude that  \begin{equation}
    \| \bm{\chi}_h^n \|^2 + \kappa \tau \mu  \| \bm{\chi}_h^n \|^2_{\DG} \leq 0.
\end{equation}
This implies that $\bm{\chi}_h^n=\bm{0}$. Thus, the solution to \eqref{eq:intermidiate_velocity} is unique. 

\textit{Existence of} $\bm{u}_h^n$ \textit{and} $p_h^n$: The existence of $\phi_h^n \in M_{h0}$ follows by similar arguments, the coercivity property of $a_{\mathrm{ellip}}(\cdot, \cdot)$, and the fact that $| \cdot |_{\DG}$ is a norm for the space $M_{h0}$. The existence of $p_h^n \in M_{h0}$ follows directly from \eqref{eq:update_pressure} and Lemma \ref{prep:pressure_in_Mh0}. The existence of  $\bm{u}_h^n \in \bm{X}_h$ follows from \eqref{eq:update_velocity}. 
\end{proof}
\section{Stability}\label{sec:stability}
In this section, we present a stability analysis of the proposed scheme. 
We proceed by noting the following identity. 
\begin{lemma}\label{prep:weak_divergence}
   Let $(\bm{u}^n_h, \bm{v}^n_h, \phi_h^n) \in \bm{X}_h \times \bm{X}_h \times M_{h0}$ be defined by algorithm \eqref{eq:intermidiate_velocity}-\eqref{eq:update_velocity}. For all $q_h \in M_h$ and $n \geq 1$, the following holds.
   \begin{multline}
    b(\bm{u}_h^n, q_h) = b(\bm{v}_h^n, q_h) + \tau a_{\mathrm{ellip}}(\phi_h^n, q_h) \\ - \tau \sum_{e\in \Gamma_h} \frac{\tilde{\sigma}}{h_e}  \int_{e} [\phi^n_h][q_h] + \tau (\bm{G}_h([\phi^n_h]), \bm{G}_h([q_h])).\label{eq:preposition} 
   \end{multline}
   In addition, we have 
    \begin{align}    
        b(\bm{u}_h^n, q_h) &= - \tau \sum_{e\in \Gamma_h} \frac{\tilde{\sigma}}{h_e} \int_{e} [\phi^n_h][q_h] + \tau (\bm{G}_h([\phi^n_h]), \bm{G}_h([q_h])). \label{eq:preposition_1}
    \end{align}
\end{lemma}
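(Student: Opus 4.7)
The plan is to prove \eqref{eq:preposition} first by turning $b(\bm{u}_h^n, q_h)$ into an $L^2$ pairing and applying the velocity update \eqref{eq:update_velocity_1}, and then to obtain \eqref{eq:preposition_1} by invoking the pressure correction equation \eqref{eq:pressure_correction}, after checking that it extends from $M_{h0}$ to all of $M_h$.

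For \eqref{eq:preposition}, I would first apply \eqref{eq:def_b_lift_2} to write
\begin{equation*}
b(\bm{u}_h^n, q_h) = (\bm{u}_h^n, -\nabla_h q_h + \bm{G}_h([q_h])).
\end{equation*}
The crucial observation is that $\bm{\theta}_h := -\nabla_h q_h + \bm{G}_h([q_h])$ belongs to $\bm{X}_h$: the broken gradient of a $\mathbb{P}_{k_2}$ polynomial has degree at most $k_2 - 1 \leq k_1$ under the assumption $k_2 \leq k_1 + 1$, and $\bm{G}_h([q_h]) \in \bm{X}_h$ by construction. Thus $\bm{\theta}_h$ is an admissible test function in \eqref{eq:update_velocity_1}, yielding
\begin{equation*}
b(\bm{u}_h^n, q_h) = (\bm{v}_h^n, \bm{\theta}_h) + \tau \, b(\bm{\theta}_h, \phi_h^n) = b(\bm{v}_h^n, q_h) + \tau \, b\bigl(-\nabla_h q_h + \bm{G}_h([q_h]),\, \phi_h^n\bigr),
\end{equation*}
where the first summand is rewritten using \eqref{eq:def_b_lift_2} in reverse.

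Next I would identify the remainder term with the target expression. Applying \eqref{eq:def_b_lift_2} once more with $\phi_h^n$ in the second slot and expanding,
\begin{equation*}
b(\bm{\theta}_h, \phi_h^n) = (\nabla_h \phi_h^n, \nabla_h q_h) - (\nabla_h \phi_h^n, \bm{G}_h([q_h])) - (\bm{G}_h([\phi_h^n]), \nabla_h q_h) + (\bm{G}_h([\phi_h^n]), \bm{G}_h([q_h])).
\end{equation*}
On the other hand, expanding $a_{\mathrm{ellip}}(\phi_h^n, q_h)$ from its definition and using the defining property of $\bm{G}_h$ (applied with $\bm{\psi}_h = \nabla_h \phi_h^n$ and with $\bm{\psi}_h = \nabla_h q_h$, both of which lie in $\bm{X}_h$) to convert the consistency and symmetry face integrals into $L^2$ products with $\bm{G}_h([q_h])$ and $\bm{G}_h([\phi_h^n])$ respectively, I obtain
\begin{equation*}
a_{\mathrm{ellip}}(\phi_h^n, q_h) - \sum_{e\in\Gamma_h}\frac{\tilde{\sigma}}{h_e}\int_e [\phi_h^n][q_h] = (\nabla_h \phi_h^n, \nabla_h q_h) - (\bm{G}_h([q_h]), \nabla_h \phi_h^n) - (\bm{G}_h([\phi_h^n]), \nabla_h q_h).
\end{equation*}
Subtracting these two displays matches $b(\bm{\theta}_h, \phi_h^n)$ term-by-term with the claimed RHS of \eqref{eq:preposition}, completing its proof.

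For \eqref{eq:preposition_1} I would first extend the pressure correction equation from $M_{h0}$ to $M_h$. Using the equivalent form \eqref{eq:equiv_form_b} one sees that $b(\bm{v}_h^n, c) = 0$ for any constant $c$, and similarly $a_{\mathrm{ellip}}(\phi_h^n, c) = 0$ since all broken gradients and jumps of $c$ vanish. Thus decomposing $q_h \in M_h$ as $q_h = q_h^0 + \bar{c}$ with $q_h^0 \in M_{h0}$, linearity gives $\tau a_{\mathrm{ellip}}(\phi_h^n, q_h) + b(\bm{v}_h^n, q_h) = 0$ for all $q_h \in M_h$, which substituted into \eqref{eq:preposition} yields \eqref{eq:preposition_1}.

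The main obstacle is the bookkeeping in the middle step: correctly matching the four terms in the expansion of $b(-\nabla_h q_h + \bm{G}_h([q_h]), \phi_h^n)$ with the (consistency, symmetry, and broken volume) pieces of $a_{\mathrm{ellip}}$, while keeping track of whether $\bm{G}_h$ acts on $[q_h]$ or $[\phi_h^n]$. The rest reduces to straightforward linearity.
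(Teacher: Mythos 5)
Your proposal is correct and follows essentially the same route as the paper: the paper tests \eqref{eq:update_velocity} with $\nabla_h q_h$ and $\bm{G}_h([q_h])$ separately and subtracts, which produces exactly the combined test function $-\nabla_h q_h + \bm{G}_h([q_h])$ you use directly, and the identity you derive relating $b(\bm{\theta}_h,\phi_h^n)$ to $a_{\mathrm{ellip}}$, the penalty sum, and $(\bm{G}_h([\phi_h^n]),\bm{G}_h([q_h]))$ is precisely the paper's key observation, as is the mean-zero decomposition used for the second claim. Your two added touches — checking $\nabla_h q_h\in\bm{X}_h$ via $k_2\leq k_1+1$ and justifying the $a_{\mathrm{ellip}}$ identity through the defining property of $\bm{g}_e$ — are details the paper leaves implicit, and they are correct.
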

\begin{proof}
    Let $q_h \in M_{h}$ be arbitrary but fixed. Take $\bm{\theta}_h = \nabla_h q_h$ in \eqref{eq:update_velocity}. We have, 
    \begin{align}
    (\bm{u}_h^n, \nabla_h q_h )   &= (\bm{v}_h^n, \nabla_h q_h ) - \tau  (\nabla_h \phi_h^n-\bm{G}_h([\phi^n_h]), \nabla_h q_h). \label{eq:attempt2_1}  
    \end{align}
    Next, take $\bm{\theta}_h = \bm{G}_h([q_h])$ in \eqref{eq:update_velocity}.  We have, 
    \begin{align}
    (\bm{u}_h^n, \bm{G}_h([q_h]))   &= (\bm{v}_h^n, \bm{G}_h([q_h])) - \tau (\nabla_h \phi_h^n - \bm{G}_h([\phi_h^n]), \bm{G}_h([q_h])). \label{eq:attempt2_2}
    \end{align}
    Subtracting \eqref{eq:attempt2_1} from \eqref{eq:attempt2_2} and using \eqref{eq:def_b_lift_2}, we obtain 
    \begin{align}
    b(\bm{u}_h^n , q_h) & = b(\bm{v}_h^n , q_h )  + \tau (\nabla_h \phi_h^n - \bm{G}_h([\phi_h^n]), \nabla_h q_h - \bm{G}_h([q_h])). \label{eq:attempt2_3}
    \end{align}
Observe that 
\begin{multline}
  (\nabla_h \phi_h^n - \bm{G}_h([\phi_h^n]), \nabla_h q_h - \bm{G}_h([q_h])) =   a_{\mathrm{ellip}}(\phi_h^n, q_h) \\ - \sum_{e\in\Gamma_h} \frac{\tilde{\sigma}}{h_e} \int_e [\phi^n_h][q_h] +  (\bm{G}_h([\phi^n_h]), \bm{G}_h([q_h])). \label{eq:attempt2_4}
\end{multline}
Substituting \eqref{eq:attempt2_4} in \eqref{eq:attempt2_3} implies \eqref{eq:preposition}.
 To see that \eqref{eq:preposition_1} holds, define $\beta_h \in M_{h0}$ as $\beta_h = q_h - \langle q_h\rangle$ with $ \langle q_h \rangle = (1/|\Omega|)\int_{\Omega} q_h$. Since $b(\bm{v}_h^n, \langle q_h \rangle ) = \aelip(\phi_h^n , \langle q_h \rangle ) = 0$, we obtain 
 \begin{equation} \nonumber
    b(\bm{u}_h^n , q_h) = b(\bm{v}_h^n,\beta_h) + \tau a_{\mathrm{ellip}}(\phi_h^n, \beta_h) - \tau \sum_{e\in\Gamma_h} \frac{\tilde{\sigma}}{h_e} \int_e [\phi^n_h][q_h]  + \tau  (\bm{G}_h([\phi^n_h]), \bm{G}_h([q_h])). 
 \end{equation}
 By applying \eqref{eq:pressure_correction}, we obtain \eqref{eq:preposition_1}. 
\end{proof}
We recall the following $L^q$ bound for the broken Sobolev spaces. Since $\bm{X} \subset H^1(\mesh_h)^d$, the proof of this lemma can be found in
Lemma 6.2 in \cite{girault2005discontinuous} in 2D and in Corollary 2.2 in 3D in \cite{lasis2003poincare}.
\begin{lemma}\label{lemma:Poincare_inequality}
There exists a constant $C_P$ independent of $h$ and $\tau$ but depending on $q$ such that 
\begin{equation}
    \| \bm{\theta} \|_{L^{q}(\Omega)} \leq C_P \| \bm{\theta} \|_{\DG}, \quad \forall \bm{\theta} \in \bm{X}, \label{eq:discreter_poincare}
\end{equation}
where $2\leq q<\infty$ in 2D $(d=2)$ and $2\leq q\leq 6$ in 3D $(d=3)$.  
\end{lemma}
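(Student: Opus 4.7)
The plan is to reduce this discrete Sobolev embedding to the classical continuous Sobolev embedding on $H^1_0(\Omega)$. The guiding principle is that although a function $\bm{\theta} \in \bm{X}$ is only piecewise smooth across mesh faces, its jumps are controlled by the $\|\cdot\|_{\DG}$ norm, which captures exactly the failure of $\bm{\theta}$ to belong to $H^1_0(\Omega)^d$.

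First, I would construct an Oswald-type averaging operator $\mathcal{A}_h : \bm{X} \to H^1_0(\Omega)^d$. Since averaging is only naturally defined on piecewise polynomials, the construction proceeds in two stages: apply an elementwise $L^2$ projection of $\bm{\theta}$ onto a polynomial space of sufficiently high degree, then average the resulting Lagrange nodal values across neighboring elements, with zero values imposed at nodes on $\partial \Omega$. A scaling argument on the reference element produces the key estimate
\begin{equation}
\sum_{E \in \mesh_h} \Big( h_E^{-2} \|\bm{\theta} - \mathcal{A}_h \bm{\theta}\|_{L^2(E)}^2 + \|\nabla(\bm{\theta} - \mathcal{A}_h \bm{\theta})\|_{L^2(E)}^2 \Big) \leq C \sum_{e \in \Gamma_h \cup \partial \Omega} h_e^{-1}\|[\bm{\theta}]\|_{L^2(e)}^2,
\end{equation}
bounding the conforming approximation error entirely by the jump seminorm of $\bm{\theta}$.

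With $\mathcal{A}_h$ in hand, I would split $\|\bm{\theta}\|_{L^q(\Omega)} \leq \|\mathcal{A}_h \bm{\theta}\|_{L^q(\Omega)} + \|\bm{\theta} - \mathcal{A}_h \bm{\theta}\|_{L^q(\Omega)}$. The first piece is handled by the classical Sobolev embedding $H^1_0(\Omega) \hookrightarrow L^q(\Omega)$, which holds for $2 \leq q < \infty$ when $d=2$ and for $2 \leq q \leq 6$ when $d=3$; combined with the approximation estimate above, this yields $\|\mathcal{A}_h \bm{\theta}\|_{L^q(\Omega)} \leq C_q \|\bm{\theta}\|_{\DG}$. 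For the second piece, I would apply the local, scale-invariant Sobolev embedding $H^1(E) \hookrightarrow L^q(E)$ on each element and then sum, again invoking the approximation estimate to close the bound in terms of $\|\bm{\theta}\|_{\DG}$.

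The main technical obstacle is the construction of $\mathcal{A}_h$ and its analysis on the non-polynomial space $\bm{X}$: the preliminary $L^2$ projection must be controlled carefully so that the right-hand side of the approximation bound involves only the face jumps of the original $\bm{\theta}$, not any broken Sobolev seminorm of $\bm{\theta}$ itself. The endpoint case $q=6$ in 3D is also delicate because the Sobolev embedding is critical there, but since the approximation estimate operates at the full $H^1$ level the scale-invariant embedding $H^1 \hookrightarrow L^6$ suffices without any loss of factor.
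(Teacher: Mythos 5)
Your overall strategy is sound, but note first that the paper does not prove this lemma at all: it simply observes $\bm{X}\subset H^1(\mesh_h)^d$ and cites Lemma 6.2 of Girault--Rivi\`ere--Wheeler for $d=2$ and Corollary 2.2 of Lasis--S\"uli for $d=3$. Those references argue quite differently from you (essentially by exploiting that a broken $H^1$ function, extended with its face jumps as surface measures, lies in $BV$, together with the embedding $BV\hookrightarrow L^{d/(d-1)}$ and an iteration/interpolation to reach the stated range of $q$). Your route via an Oswald-type averaging operator $\mathcal{A}_h$ and the conforming embedding $H^1_0(\Omega)\hookrightarrow L^q(\Omega)$ is the other standard proof in the literature (Karakashian--Pascal style) and is perfectly viable here; it is arguably more self-contained, at the price of having to build and analyze $\mathcal{A}_h$ on the non-polynomial space $\bm{X}$. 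Your treatment of the endpoint $q=6$ in 3D (scale-invariant local embedding plus $\ell^2\hookrightarrow\ell^q$ when summing over elements) is also correct.

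There is, however, one genuine error in an intermediate claim. The approximation estimate you state,
\begin{equation*}
\sum_{E \in \mesh_h} \Bigl( h_E^{-2} \|\bm{\theta} - \mathcal{A}_h \bm{\theta}\|_{L^2(E)}^2 + \|\nabla(\bm{\theta} - \mathcal{A}_h \bm{\theta})\|_{L^2(E)}^2 \Bigr) \leq C \sum_{e \in \Gamma_h \cup \partial \Omega} h_e^{-1}\|[\bm{\theta}]\|_{L^2(e)}^2,
\end{equation*}
with \emph{only} the jump seminorm on the right, holds for piecewise polynomials but cannot hold for general $\bm{\theta}\in\bm{X}$: take $\bm{\theta}$ globally continuous, vanishing on $\partial\Omega$, but oscillating inside elements; then the right-hand side is zero while the left-hand side is not, since your $\mathcal{A}_h\bm{\theta}$ passes through an elementwise $L^2$ projection onto a fixed polynomial degree and the projection error $\bm{\theta}-\Pi\bm{\theta}$ is controlled only by $\|\nabla\bm{\theta}\|_{L^2(E)}$, not by the jumps. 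Likewise the jumps of $\Pi\bm{\theta}$ are bounded by the jumps of $\bm{\theta}$ only up to trace terms of $\bm{\theta}-\Pi\bm{\theta}$, which again bring in the broken gradient. The repair is harmless for your purposes: the correct right-hand side is $C\|\bm{\theta}\|_{\DG}^2$ (broken gradient plus scaled jumps), which is exactly what the target inequality allows, so the final conclusion survives. But as written, the claim you single out as the ``main technical obstacle'' is resolved incorrectly, and the assertion that the bound involves ``only the face jumps of the original $\bm{\theta}$'' should be dropped.
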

To state and prove our stability result, for $n \geq 0$, we introduce auxiliary functions $S_h^n \in M_h$ and $ \xi_h^n \in M_{h}$. 
\begin{alignat}{2}
S_h^0 = 0, & \quad  S_h^n = \delta \mu \sum_{i=1}^{n} \big( \nabla_h \cdot \bm{v}_h^i - R_h([\bm{v}_h^i])\big), && \quad n \geq 1, \label{eq:def_S}\\ 
\xi_h^0 = 0, & \quad \xi_h^n = p_h^n + S_h^n, && \quad n \geq 1.  \label{eq:def_xi}
\end{alignat} 
Note that $S_h^n \in M_{h0}$ since with the equivalent definition \eqref{eq:def_b_lift}, we have 
\begin{equation}\label{eq:S_in_Mh0}
\int_{\Omega} S_h^n = \delta\mu \sum_{i = 1}^n b(\bm{v}_h^i, 1) = 0, \quad n \geq 1. 
\end{equation}
This implies that $\xi_h^n \in M_{h0}$ since $p_h^n \in M_{h0}$, see Lemma \ref{prep:pressure_in_Mh0}.
We are now ready to prove the main result of this section.
\begin{theorem} \label{prep:stability} 
Assume that $\sigma \geq M_{k_2}^2/d$, $\tilde{\sigma} \geq \tilde{M}_{k_1}^2$, $\delta \leq \kappa/(2d)$, and $\bm{u}^0 \in L^2(\Omega)^d$. Then, the pressure correction scheme \eqref{eq:intermidiate_velocity}-\eqref{eq:update_velocity} is unconditionally stable. For all $\tau > 0$ and  $1 \leq  m \leq N_T$,
\begin{multline}
    \| \bm{u}_h^m\|^2 +  \frac{\kappa  \mu }{2}  \tau \sum_{n=1}^{m} \| \bm{v}_h^n \|_{\DG}^2  
+   \frac{1}{2} \tau^2 \vert \xi_h^m \vert_{\DG}^2  + \frac{1}{\delta \mu}\tau \|S_h^m\|^2 \\ \leq \|\bm{u}^0 \|^2 +  \ \frac{2C_P^2}{\kappa \mu}  \tau \sum_{n=1}^{m} \|\bm{f}^{n}\|^2. 
\end{multline}  
\end{theorem}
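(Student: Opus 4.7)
The plan is to mimic the continuous pressure--correction energy argument, but with the auxiliary fields $S_h^n$ and $\xi_h^n$ handling all the lift corrections coming from $R_h$ and $\bm{G}_h$. In the continuous setting $S$ would vanish because $\nabla\!\cdot\!\bm{v}=0$, but here $\bm{v}_h^n$ is only divergence--free through $b(\bm{v}_h^n,\cdot)$, which produces a nontrivial $S_h^n$ in the pressure update. The three quantities $\|\bm{u}_h^m\|^2$, $\tau^2|\xi_h^m|_{\DG}^2$, and $\tau\|S_h^m\|^2/(\delta\mu)$ will each arise from a separate polarization/telescoping identity.

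\textbf{Velocity identity.} Starting from \eqref{eq:intermidiate_velocity} tested with $\bm{\theta}_h=2\bm{v}_h^n$, the polarization $2(\bm{u}_h^{n-1},\bm{v}_h^n)=\|\bm{u}_h^{n-1}\|^2+\|\bm{v}_h^n\|^2-\|\bm{v}_h^n-\bm{u}_h^{n-1}\|^2$ together with \eqref{eq:cpositivity} and \eqref{eq:coercivity_a_epsilon} yields
$$\|\bm{v}_h^n\|^2-\|\bm{u}_h^{n-1}\|^2+\|\bm{v}_h^n-\bm{u}_h^{n-1}\|^2+2\tau\mu\kappa\|\bm{v}_h^n\|_{\DG}^2\le 2\tau\, b(\bm{v}_h^n,p_h^{n-1})+2\tau(\bm{f}^n,\bm{v}_h^n).$$
To convert $\|\bm{v}_h^n\|^2$ into $\|\bm{u}_h^n\|^2$ I would apply \eqref{eq:update_velocity}, \eqref{eq:def_b_lift_2}, and \eqref{eq:pressure_correction} together with the identity \eqref{eq:attempt2_4} for $q_h=\phi_h^n$, giving
$$\|\bm{v}_h^n\|^2=\|\bm{u}_h^n\|^2+\tau^2\aelip(\phi_h^n,\phi_h^n)+\tau^2\!\!\sum_{e\in\Gamma_h}\!\frac{\tilde\sigma}{h_e}\|[\phi_h^n]\|_{L^2(e)}^2-\tau^2\|\bm{G}_h([\phi_h^n])\|^2,$$
where the last two terms are jointly non--negative by \eqref{eq:lift_prop_g} and $\tilde\sigma\ge\tilde M_{k_1}^2$, hence may be dropped.

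\textbf{Telescoping the pressure term.} Since $p_h^{n-1}\in M_{h0}$, \eqref{eq:pressure_correction} gives $b(\bm{v}_h^n,p_h^{n-1})=-\tau\aelip(\phi_h^n,p_h^{n-1})$. Splitting $p_h^{n-1}=\xi_h^{n-1}-S_h^{n-1}$ and $\phi_h^n=\xi_h^n-\xi_h^{n-1}$, the symmetry of $\aelip$ and the polarization $2\aelip(a-b,b)=\aelip(a,a)-\aelip(b,b)-\aelip(a-b,a-b)$ produce the telescoping increment $\tau^2\aelip(\xi_h^n,\xi_h^n)-\tau^2\aelip(\xi_h^{n-1},\xi_h^{n-1})$ plus a $\tau^2\aelip(\phi_h^n,\phi_h^n)$ that cancels the one generated above, and a cross term $2\tau^2\aelip(\phi_h^n,S_h^{n-1})$. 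Since $S_h^{n-1}\in M_{h0}$ by \eqref{eq:S_in_Mh0}, testing \eqref{eq:pressure_correction} with $q_h=S_h^{n-1}$ and then applying \eqref{eq:def_b_lift} together with the recursion $S_h^n-S_h^{n-1}=\delta\mu(\nabla_h\!\cdot\!\bm{v}_h^n-R_h([\bm{v}_h^n]))$ and a final polarization rewrites this cross term as
$$2\tau^2\aelip(\phi_h^n,S_h^{n-1})=-\tfrac{\tau}{\delta\mu}\|S_h^n\|^2+\tfrac{\tau}{\delta\mu}\|S_h^{n-1}\|^2+\tfrac{\tau}{\delta\mu}\|S_h^n-S_h^{n-1}\|^2,$$
contributing the third telescoping pair with a leftover residual.

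\textbf{Absorption, summation, and main obstacle.} After summing over $n=1,\dots,m$ the residual $\tfrac{\tau}{\delta\mu}\|S_h^n-S_h^{n-1}\|^2=\tau\delta\mu\|\nabla_h\!\cdot\!\bm{v}_h^n-R_h([\bm{v}_h^n])\|^2$ is bounded, via $\|\nabla_h\!\cdot\!\bm{v}_h^n\|^2\le d\sum_E\|\nabla\bm{v}_h^n\|_{L^2(E)}^2$, \eqref{eq:lift_prop_r}, and $\sigma\ge M_{k_2}^2/d$, by $2\tau\delta\mu d\,\|\bm{v}_h^n\|_{\DG}^2$, which $\delta\le\kappa/(2d)$ lets one absorb into half of $2\tau\mu\kappa\|\bm{v}_h^n\|_{\DG}^2$; Cauchy--Schwarz, Poincar\'e \eqref{eq:discreter_poincare}, and Young then absorb $2\tau(\bm{f}^n,\bm{v}_h^n)$ into another $\tfrac{\mu\kappa\tau}{2}\|\bm{v}_h^n\|_{\DG}^2$, producing the right--hand side $\tfrac{2C_P^2}{\mu\kappa}\tau\sum_n\|\bm{f}^n\|^2$. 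The conclusion follows using $\xi_h^0=S_h^0=0$, the coercivity \eqref{eq:coercivity_a_ellip} $\aelip(\xi_h^m,\xi_h^m)\ge\tfrac12|\xi_h^m|_{\DG}^2$, and the $L^2$--stability $\|\bm{u}_h^0\|\le\|\bm{u}^0\|$ of the local projection. The hard part will be the cross term $\aelip(\phi_h^n,S_h^{n-1})$: recognizing that testing \eqref{eq:pressure_correction} against $S_h^{n-1}$ converts it into a discrete time--derivative of $\|S_h^n\|^2$ is the key move, and this is exactly what forces the smallness condition $\delta\le\kappa/(2d)$.
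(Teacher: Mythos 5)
Your proposal is correct and follows essentially the same route as the paper's proof: the same energy identities (your velocity identity is exactly the paper's intermediate equation obtained from testing the update step with $\bm{u}_h^n$ and $\bm{u}_h^n-\bm{v}_h^n$), the same telescoping of $\xi_h^n$ and $S_h^n$, and the same absorption arguments using $\tilde\sigma\ge\tilde M_{k_1}^2$, $\sigma\ge M_{k_2}^2/d$, and $\delta\le\kappa/(2d)$. The only cosmetic difference is that you convert all of $b(\bm{v}_h^n,p_h^{n-1})$ to $-\tau\aelip(\phi_h^n,p_h^{n-1})$ before splitting off $S_h^{n-1}$ and then convert that piece back via \eqref{eq:pressure_correction}, whereas the paper splits at the level of $b$ directly; the two are equivalent.
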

\begin{proof}
Let $\bm{\theta}_h = \bm{v}_h^n$ in \eqref{eq:intermidiate_velocity}. 
We use the positivity property of $a_\mathcal{C}$ \eqref{eq:cpositivity} and  the coercivity of $a_{\epsilon}$\eqref{eq:coercivity_a_epsilon}. We obtain the following.
\begin{equation}
    \begin{split}
  \frac12  (\| \bm{v}_h^n\|^2 -   \|\bm{u}_h^{n-1} \|^2 +   \|\bm{v}_h^n - \bm{u}_h^{n-1} \|^2 )+ \kappa \mu \tau  & \| \bm{v}_h^n \|^2_{\DG} \\ & \leq \tau b(\bm{v}_h^n, p_h^{n-1}) + \tau (\bm{f}^{n}, \bm{v}_h^n). 
    \end{split}  
  \label{eq:stability_dg_1}
\end{equation}
Next, let $\bm{\theta}_h = \bm{u}_h^n$ in \eqref{eq:update_velocity_1},
\begin{equation}
 \frac12 (\|\bm{u}_h^n\|^2 -  \|\bm{v}_h^n\|^2 +  \|\bm{u}_h^n - \bm{v}_h^{n} \|^2 )  = \tau b(\bm{u}_h^n, \phi_h^n) . \label{eq:looking_at_updatevel}     
\end{equation}
Let $\bm{\theta}_h = \bm{u}_h^n - \bm{v}_h^n$ in \eqref{eq:update_velocity_1}.  We obtain, 
\begin{align}
 \|\bm{u}_h^n - \bm{v}_h^n \|^2 & = \tau b(\bm{u}_h^n - \bm{v}_h^n, \phi_h^n).   \nonumber
\end{align}
Using \eqref{eq:preposition} in Lemma \ref{prep:weak_divergence}, we find
\begin{equation}
    \|\bm{u}_h^n - \bm{v}_h^n \|^2 = \tau^2 a_{\mathrm{ellip}}(\phi_h^n,\phi_h^n) - \tau^2 \sum_{e\in\Gamma_h} \frac{\tilde{\sigma}}{h_e}\|[\phi^n_h]\|^2_{L^2(e)} + \tau^2 \| \bm{G}_h([\phi^n_h]) \|^2. \nonumber 
\end{equation}
Similarly, using \eqref{eq:preposition_1} in Lemma \ref{prep:weak_divergence}, we obtain 
\begin{equation}
     b(\bm{u}_h^n, \phi_h^n) = -\tau \sum_{e\in\Gamma_h} \frac{\tilde{\sigma}}{h_e}\|[\phi^n_h]\|^2_{L^2(e)} + \tau \|\bm{G}_h([\phi_h^n])\|^2 . \nonumber
\end{equation}
Substituting the above expressions in \eqref{eq:looking_at_updatevel}, we obtain 
\begin{multline}
   \frac{1}{2} (\|\bm{u}_h^n\|^2 - \|\bm{v}_h^n\|^2) + \frac{\tau^2}{2}a_{\mathrm{ellip}}(\phi_h^n, \phi_h^n)\\  + \frac{\tau^2}{2} \sum_{e\in\Gamma_h}  \frac{\tilde{\sigma}}{h_e}\| [\phi^n_h]\|^2_{L^2(e)}   = \frac{\tau^2}{2} \| \bm{G}_h([\phi^n_h])\|^2 . \label{eq:using_prep_1}
\end{multline}
The last term is estimated using \eqref{eq:lift_prop_g}: 
\begin{equation}
    \|\bm{G}_h([\phi_h^n]) \| \leq \tilde{M}_{k_1} \left(\sum_{e\in \Gamma_h} h_e^{-1} \|[\phi_h^n]\|_{L^2(e)}^2 \right)^{1/2}.
\end{equation} 
Using the above estimate in \eqref{eq:using_prep_1} yields 
\begin{equation}
   \frac{1}{2}( \|\bm{u}_h^n\|^2 - \|\bm{v}_h^n\|^2) + \frac{\tau^2}{2} a_{\mathrm{ellip}}(\phi_h^n, \phi_h^n) + \frac{\tau^2}{2} \sum_{e\in\Gamma_h} (\tilde{\sigma} -\tilde{M}^2_{k_1}) h_e^{-1}\| [\phi^n_h]\|^2_{L^2(e)} \leq  0. 
\end{equation}
With the assumption on the penalty parameter, $\tilde{\sigma} \geq \tilde{M}_{k_1}^2$, we obtain 
\begin{equation}
    \frac12\|\bm{u}_h^n\|^2 - \frac12 \|\bm{v}_h^n\|^2 + \frac{\tau^2}{2}a_{\mathrm{ellip}}(\phi_h^n, \phi_h^n) \leq  0. \label{eq:error_eq_update_vel}
\end{equation}
We add \eqref{eq:error_eq_update_vel} to \eqref{eq:stability_dg_1}, 
\begin{multline}
  \frac12 ( \| \bm{u}_h^n\|^2 -   \|\bm{u}_h^{n-1} \|^2 +   \|\bm{v}_h^n - \bm{u}_h^{n-1} \|^2) + \kappa \mu \tau \| \bm{v}_h^n \|^2_{\DG}  + \frac{\tau^2}{2} a_{\mathrm{ellip}}(\phi_h^n,\phi_h^n) \\ \leq  \tau b(\bm{v}_h^n,p_h^{n-1}) + \tau (\bm{f}^{n}, \bm{v}_h^n).  \label{eq:error_after_adding}
\end{multline} 
To handle the first term on the right-hand side of the above inequality, we use the auxiliary function $S_h^n$ defined in \eqref{eq:def_S} and write
\begin{equation}
    b(\bm{v}_h^n, p_h^{n-1}) =  b(\bm{v}_h^n, p_h^{n-1}+S_h^{n-1})  -  b(\bm{v}_h^n, S_h^{n-1}). \label{eq:rhs_term}
\end{equation}
With the equivalent expression \eqref{eq:def_b_lift}, the second term in \eqref{eq:rhs_term} is rewritten:
\begin{multline}
b(\bm{v}_h^n,S_h^{n-1}) = \left(\nabla_h \cdot \bm{v}_h^n - R_h([\bm{v}_h^n]) , S_h^{n-1}\right)  \\  = \frac{1}{\delta \mu} (S_h^n - S_h^{n-1}, S_h^{n-1})  = \frac{1}{2\delta\mu} \left(\|S_h^n \|^2 - \|S_h^{n-1}\|^2 - \| S_h^n - S_h^{n-1}\|^2 \right).\label{eq:second_term_final_form}
\end{multline}
Recall the definition of $\xi_h^n$ in \eqref{eq:def_xi} and note that \eqref{eq:update_pressure} implies
\begin{equation}
\xi_h^n-\xi_h^{n-1} = \phi_h^n, \quad \forall n \geq 1. \label{eq:writing_pressure_projection_interm_gamma}
\end{equation}
Since $S_h^n, p_h^n \in M_{h0}$, we can apply \eqref{eq:pressure_correction}. With \eqref{eq:writing_pressure_projection_interm_gamma}, we have 
\begin{equation}
    b(\bm{v}_h^{n}, \xi_h^{n-1}) =- \tau a_{\mathrm{ellip}}(\phi_h^n, \xi_h^{n-1} ) = -\tau \aelip(\xi_h^n - \xi_h^{n-1}, \xi_h^{n-1}). \label{eq:first_term}
\end{equation} 
Since $a_{\mathrm{ellip}}(\cdot, \cdot)$ is symmetric, we have 
\begin{align}
   b(\bm{v}_h^n, \xi_h^{n-1}) &= -\frac{\tau}{2} \left(  a_{\mathrm{ellip}}(\xi_h^n, \xi_h^n ) -  a_{\mathrm{ellip}}(\xi_h^{n-1},\xi_h^{n-1}) -  a_{\mathrm{ellip}}(\xi_h^{n}-\xi_h^{n-1}, \xi_h^{n} - \xi_h^{n-1})\right) \nonumber \\ 
    & = -\frac{\tau}{2}\left(a_{\mathrm{ellip}}(\xi_h^n, \xi_h^n ) -  a_{\mathrm{ellip}}(\xi_h^{n-1},\xi_h^{n-1}) - a_{\mathrm{ellip}}(\phi_h^n, \phi_h^n) \right).\label{eq:first_term_final_form}
\end{align}
Substituting \eqref{eq:first_term_final_form} and \eqref{eq:second_term_final_form} in \eqref{eq:rhs_term} yields 
\begin{multline}
    b(\bm{v}_h^n, p_h^{n-1}) = -\frac{\tau}{2} \left( a_{\mathrm{ellip}}(\xi_h^n, \xi_h^n ) - a_{\mathrm{ellip}}(\xi_h^{n-1},\xi_h^{n-1}) - a_{\mathrm{ellip}}(\phi_h^n,\phi_h^n)\right) \\ - \frac{1}{2\delta \mu} \left(\|S_h^n \|^2 - \|S_h^{n-1}\|^2 - \| S_h^n - S_h^{n-1}\|^2 \right).
    \label{eq:rhs_final_form} 
\end{multline}
With the above expression, \eqref{eq:error_after_adding} becomes 
\begin{multline}
    \frac12 (\| \bm{u}_h^n\|^2 - \|\bm{u}_h^{n-1} \|^2 +   \|\bm{v}_h^n - \bm{u}_h^{n-1} \|^2 )+ \kappa \mu \tau \| \bm{v}_h^n \|_{\DG}^2  \\ +\frac{\tau^2}{2} \left( a_{\mathrm{ellip}}(\xi_h^n,\xi_h^{n}) - a_{\mathrm{ellip}}(\xi_h^{n-1}, \xi_h^{n-1}) \right)  +
    \frac{\tau}{2\delta \mu} \left(\|S_h^n \|^2 - \|S_h^{n-1}\|^2  \right) \\ 
    \leq \tau (\bm{f}^{n}, \bm{v}_h^n) + \frac{\tau}{2\delta \mu} \| S_h^n - S_h^{n-1}\|^2. \label{eq:stability_semi_final}
\end{multline}
To handle the second term on the right-hand side of \eqref{eq:stability_semi_final}, recall that from the definition of $S_h^n$, see \eqref{eq:def_S}, 
$$ S_h^{n} - S_h^{n-1} = \delta \mu( \nabla_h \cdot \bm{v}_h^n - R_h([\bm{v}_h^n])).$$ 
Using \eqref{eq:lift_prop_r}, the assumption that $\delta \leq \kappa/(2d)$ and $\sigma \geq M_{k_2}^2/d$, we have 
\begin{align}
   \frac{1}{2\delta \mu} \| S_h^n - S_h^{n-1}\|^2 &\leq  \delta \mu \| \nabla_h \cdot \bm{v}_h^n \|^2 +  \delta \mu \| R_h([\bm{v}_h^n])\|^2,  \nonumber\\
    & \leq d \delta \mu  \| \nabla_h \bm{v}_h^n\|^2 +  \delta \mu M^2_{k_2}  \sum_{e\in\Gamma_h \cup \partial \Omega} h_e^{-1}\|[\bm{v}_h^n]\|_{L^2(e)}^2, \nonumber \\ 
    & \leq  \frac{ \kappa  \mu }{2}\| \nabla_h \bm{v}_h^n \|^2 + \frac{\kappa \mu}{2}\sum_{e\in\Gamma_h \cup \partial \Omega} \sigma h_e^{-1} \|[\bm{v}_h^n]\|_{L^2(e)}^2.\label{eq:bounding_Shn_stability_proof} 
\end{align}
Substituting the above bound in \eqref{eq:stability_semi_final} yields, 
\begin{multline}
    \frac12( \| \bm{u}_h^n\|^2 - \|\bm{u}_h^{n-1} \|^2 +   \|\bm{v}_h^n - \bm{u}_h^{n-1} \|^2 )+ \frac{\kappa \mu }{2} \tau \| \bm{v}_h^n \|_{\DG}^2  \\ +\frac{\tau^2}{2} \left( a_{\mathrm{ellip}}(\xi_h^n,\xi_h^{n}) - a_{\mathrm{ellip}}(\xi_h^{n-1}, \xi_h^{n-1}) \right)  +
    \frac{\tau}{2\delta \mu} \left(\|S_h^n \|^2 - \|S_h^{n-1}\|^2  \right) 
    \leq \tau (\bm{f}^{n}, \bm{v}_h^n). \label{eq:stability_semi_final1}
\end{multline}
Using Cauchy-Schwarz's inequality, Poincare's inequality \eqref{eq:discreter_poincare},  and Young's inequality, we have 
\begin{equation}
   | (\bm{f}^{n}, \bm{v}_h^n)| \leq \frac{C_P^2}{\kappa \mu} \|\bm{f}^{n}\|^2 + \frac{\kappa\mu}{4}  \|\bm{v}_h^n\|_{\DG}^2.  \label{eq:bounding_fn_stability_proof}
\end{equation}
 We substitute the above abound in \eqref{eq:stability_semi_final1}, multiply by 2 and sum the resulting inequality from $n=1$ to $n=m$. We obtain  
\begin{multline}
    \| \bm{u}_h^m\|^2 + \frac{\kappa \mu}{2} \tau \sum_{n=1}^{m}  \|\bm{v}_h^n \|_{\DG}^2 + \tau^2 a_{\mathrm{ellip}}(\xi_h^m, \xi_h^m) + \frac{1}{\delta \mu}\tau \|S_h^m\|^2 \\\leq \|\bm{u}_h^0 \|^2 +  \frac{2C_P^2}{\kappa \mu}\tau\sum_{n=1}^{m} \|\bm{f}^{n}\|^2  + \tau^2 a_\mathrm{ellip}(\xi_h^0, \xi_h^0) + \frac{1}{\delta \mu}\tau \| S_h^{0}\|^2. 
\end{multline}
Recall that $\xi_h^0= S_h^0=  0$. With the coercivity property of $\aelip$ \eqref{eq:coercivity_a_ellip} and the stability of the $L^2$ projection, we attain the result. 
\end{proof}
\begin{remark} 
   The particular choice for the upper bound on $\delta$ in the statement of Theorem \ref{prep:stability} is chosen to simplify the writeup. However, it can be altered and a similar result would hold. For example, the condition on  $\delta$  can be chosen as  $\delta < \kappa/d$. This will lead to a larger constant $C$  resulting from Young's inequality in \eqref{eq:bounding_fn_stability_proof}.
\end{remark}

\section{Error estimates for the velocity} \label{sec:convergence_velocity}
In this section, we denote $k = k_1$ and consider the case  $k_2 = k-1$. 
To simplify notation, for any function $\bm{v} \in (W^{1,3}(\Omega) \cap L^{\infty}(\Omega))^d$, we define the norm: 
\begin{equation}
    \vertiii{\bm{v}} = \|\bm{v}\|_{L^{\infty}(\Omega)} + \vert \bm{v}\vert_{W^{1,3}(\Omega)}. 
\end{equation}
We assume that $\bm{u}^0
\in H^1_0(\Omega)^d$ and that $\nabla \cdot \bm{u}^0 = 0 $. 
Throughout the rest of this paper, we denote by  $C$ a constant independent of $\mu, h ,$ and $\tau$ and by $C_\mu$ a constant independent of $h$ and $\tau$ but depending on $e^{1/\mu}$. These constants may take different values when used in different places and may depend on certain norms of the true solution $(\bm{u}, p)$. These norms are associated with the spaces specified by the regularity assumptions of Theorem \ref{prep:first_error_estimate_velocity}. 
\subsection{Approximation}\label{subsec:interpolation}
We will make use of the operator $\Pi_h: H^1_0(\Omega)^d \rightarrow \bfX_h$  that preserves the
discrete divergence \cite{riviere2008discontinuous,chaabane2017convergence}:
\begin{equation}
b(\Pi_h \bm{u} (t), q_h) = b(\bm{u} (t) ,q_h)=0 , \quad \forall q_h \in M_h, \forall 0\leq t \leq T. \label{eq:def_pi_h}
\end{equation}
This operator satisfies the following approximation properties \cite{chaabane2017convergence}. For $E \in \mesh_h$, $1\leq r \leq \infty$, $1 \leq s \leq k+1$, $0\leq t \leq T$, and $\bm{u}(t) \in \bm{X} \cap (W^{s,r}(E) \cap H^1_0(\Omega))^d$,  
\begin{align}
\| \Pi_h \bm{u}(t) - \bm{u}(t) \|_{L^r(E)} &\leq Ch_E^s \vert \bm{u}(t) \vert_{W^{s,r}(\Delta_E)}, \label{eq:approximation_prop_local_1}\\
\| \nabla (\Pi_h \bm{u}(t) - \bm{u}(t)) \|_{L^r(E) }& \leq C h_E^{s-1}  \vert \bm{u}(t) \vert_{W^{s,r}( \Delta_E)}\label{eq:approximation_prop_local_2},   
\end{align}
where $\Delta_E$ is a macro element that contains $E$. For $ 0\leq t\leq T$, if $\bm{u}(t) \in (W^{s,r}(\Omega) \cap H_0^1(\Omega))^d$, then bounds \eqref{eq:approximation_prop_local_1} and \eqref{eq:approximation_prop_local_2} yield the global estimates: 
\begin{align}
    \| \Pi_h \bm{u}(t) - \bm{u}(t) \|_{L^r(\Omega)} &\leq Ch^s \vert \bm{u}(t) \vert_{W^{s,r}(\Omega)}, \label{eq:approximation_prop_1} \\ 
     \| \Pi_h \bm{u}(t) - \bm{u}(t) \|_{\mathrm{DG}}& \leq C h^{s-1}  \vert \bm{u}(t) \vert_{H^{s}(\Omega)} \label{eq:approximation_prop_2}.
    \end{align}
In the subsequent sections, the following stability bound for the operator $\Pi_h $ will be used. 
    \begin{lemma}
Fix $t \in [0,T]$. Assume that $\bm{u}(t) \in (L^{\infty}(\Omega)\cap W^{1,3}(\Omega)\cap H^1_0(\Omega))^d$. Then, we have the following bound. 
\begin{equation}
    \|\Pi_h \bm{u}(t)\|_{L^{\infty}(\Omega)} \leq C \vertiii{\bm{u}(t)}.\label{eq:stability_linf_pih}
\end{equation}
\end{lemma}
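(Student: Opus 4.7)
The plan is to split $\Pi_h \bm{u}(t)$ against an auxiliary polynomial interpolant that is manifestly $L^\infty$-stable, and control the resulting polynomial discrepancy via an inverse estimate combined with the approximation property \eqref{eq:approximation_prop_local_1} taken at $r=3$. Concretely, I would let $I_h^0 \bm{u}(t)$ denote the local $L^2$-projection of $\bm{u}(t)$ onto $\bm{X}_h$, applied componentwise on each $E \in \mesh_h$. Because orthogonal projection onto a finite-dimensional polynomial space on a shape-regular element is $L^\infty$-stable, we have $\|I_h^0 \bm{u}(t)\|_{L^\infty(E)} \leq C \|\bm{u}(t)\|_{L^\infty(E)}$, hence globally $\|I_h^0 \bm{u}(t)\|_{L^\infty(\Omega)} \leq C \|\bm{u}(t)\|_{L^\infty(\Omega)}$. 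A triangle inequality then reduces the claim to bounding the polynomial difference $\Pi_h \bm{u}(t) - I_h^0 \bm{u}(t)$ in $L^\infty$.

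Next I would exploit that on each $E$, the difference $\Pi_h \bm{u}(t) - I_h^0 \bm{u}(t)$ is a polynomial of degree at most $k_1$, and invoke the standard inverse estimate
\begin{equation*}
\|\Pi_h \bm{u}(t) - I_h^0 \bm{u}(t)\|_{L^\infty(E)} \leq C\, h_E^{-d/3} \|\Pi_h \bm{u}(t) - I_h^0 \bm{u}(t)\|_{L^3(E)}.
\end{equation*}
A triangle inequality around $\bm{u}(t)$, together with the local approximation estimate \eqref{eq:approximation_prop_local_1} applied to both $\Pi_h$ and $I_h^0$ with $r=3$ and $s=1$, yields
\begin{equation*}
\|\Pi_h \bm{u}(t) - I_h^0 \bm{u}(t)\|_{L^3(E)} \leq C\, h_E\, |\bm{u}(t)|_{W^{1,3}(\Delta_E)}.
\end{equation*}
Combining these two bounds gives $\|\Pi_h \bm{u}(t) - I_h^0 \bm{u}(t)\|_{L^\infty(E)} \leq C\, h_E^{1-d/3}\, |\bm{u}(t)|_{W^{1,3}(\Delta_E)}$. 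Since $d \in \{2,3\}$, the exponent $1 - d/3$ is nonnegative, so $h_E^{1-d/3}$ is bounded by an $h$-independent constant depending only on $\mathrm{diam}(\Omega)$. Taking the maximum over $E$ and using the uniformly bounded overlap of the macro patches $\Delta_E$ afforded by mesh regularity \eqref{eq:mesh_regular} produces $\|\Pi_h \bm{u}(t) - I_h^0 \bm{u}(t)\|_{L^\infty(\Omega)} \leq C\, |\bm{u}(t)|_{W^{1,3}(\Omega)}$, and assembling everything gives \eqref{eq:stability_linf_pih}.

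The main point of care is the exponent bookkeeping: one must choose $r$ in the approximation estimate large enough that the inverse-inequality factor $h_E^{-d/r}$ is absorbed by the $h_E$ supplied by the approximation estimate at $s=1$. The value $r=3$ is essentially the borderline choice in 3D, where $h_E^{-d/r} \cdot h_E = h_E^{1-d/r}$ is exactly $O(1)$; this is precisely why the norm $\vertiii{\cdot}$ is defined using $W^{1,3}$ rather than $H^1$, the latter being insufficient in three dimensions since $h_E^{-d/2}\cdot h_E$ diverges. Beyond this, the argument is routine, relying only on standard finite element tools already invoked elsewhere in Section \ref{subsec:interpolation}.
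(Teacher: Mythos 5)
Your proof is correct and follows essentially the same route as the paper: decompose $\Pi_h\bm{u}(t)$ against an $L^\infty$-stable auxiliary interpolant, apply the inverse estimate from $L^3$ to $L^\infty$, and use the approximation property at $r=3$, $s=1$ so that the exponent $1-d/3\geq 0$ closes the argument. The only difference is cosmetic — the paper uses a Lagrange interpolant where you use the local $L^2$ projection; your choice is if anything slightly more robust, since it avoids needing pointwise values of $\bm{u}(t)$, which $W^{1,3}\cap L^\infty$ regularity does not strictly guarantee in three dimensions.
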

\begin{proof}
Let $\mathcal{I}_h \bm{u}(t) \in \bm
{X}_h$ be a Lagrange interpolant of $\bm{u}(t)$. Fix $E\in \mesh_h$. With Minkowski's inequality, an inverse estimate, stability and approximation properties of the Lagrange interpolant and \eqref{eq:approximation_prop_local_1}, we have 
\begin{align*}
\| \Pi_h \bm{u}(t)\|_{L^{\infty}(E)} &\leq \| \Pi_h \bm{u}(t) - \mathcal{I}_h \bm{u}(t)\|_{L^{\infty}(E)} + \| \mathcal{I}_h \bm{u}(t)\|_{L^{\infty}(E)}, \\ 
& \leq Ch_{E}^{-d/3} \|\Pi_h \bm{u}(t) - \mathcal{I}_h \bm{u}(t) \|_{L^3(E)} + \|\bm{u}(t)\|_{L^{\infty}(E)}, \\ 
& \leq Ch_{E}^{1-d/3} |\bm{u}(t)|_{W^{1,3}(\Delta_E)} + \| \bm{u}(t)\|_{L^{\infty}(E)}, \\ 
& \leq C \vertiii{\bm{u}(t)}.\end{align*}
Given that $\|\Pi_h \bm{u}(t)\|_{L^{\infty}(\Omega)} = \max_{E \in \mesh_h} \| \Pi_h \bm{u}(t)\|_{L^{\infty}(E)}$ and the above bound is uniform in $E$, we obtain the result. 
\end{proof}
Define the local $L^2$ projection  $\pi_h: L^2(\Omega) \rightarrow M_h$ as follows. For $ 0\leq t\leq T$,  a given function $p(t) \in L^2(\Omega)$,  and any $E \in \mesh_h$,  
\begin{equation}
    \int_{E} (\pi_h p(t) - p(t))q_h = 0, \quad  \forall q_h \in \mathbbm{P}_{k-1}(E) .
\end{equation}
For $0\leq t \leq T$ and $p(t) \in H^s(\Omega)$, the following estimate holds. For all $E \in \mesh_h$, 
\begin{align} \label{eq:l2_proj_approximation}
    \|\pi_h p(t) - p(t) \|_{L^2(E)} + h_E \|\nabla_h(\pi_h p(t) - p(t) )\|_{L^2(E)} \leq Ch_E^{\min(k,s)}\vert p(t) \vert_{H^s(E)}. 
\end{align}
We also recall that the local $L^2$ projection is stable in the dG norm \cite{GiraultLiRiviere2016}. 
\begin{lemma}\label{lemma:stability_l2_projection}
Fix $t \in [0,T]$. Assume that $p(t) \in H^1(\Omega)$. Then, 
\begin{equation}
    | \pi_h p(t)|_{\DG} \leq C | p(t) |_{H^1(\Omega)}.\label{eq:bd_phN}
\end{equation} 
\end{lemma}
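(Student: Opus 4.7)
The plan is to split the $\DG$ seminorm into its two ingredients, the broken gradient and the jump contributions, and bound each in turn using the approximation estimate \eqref{eq:l2_proj_approximation} together with a trace inequality, exploiting the fact that $p(t) \in H^1(\Omega)$ has vanishing jumps across interior faces.

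For the broken gradient term, I would start from the triangle inequality
$$ \|\nabla \pi_h p(t)\|_{L^2(E)} \le \|\nabla_h(\pi_h p(t) - p(t))\|_{L^2(E)} + \|\nabla p(t)\|_{L^2(E)}, $$
then apply \eqref{eq:l2_proj_approximation} with $s=1$ (so $\min(k,s)=1$) to bound the first term by $C|p(t)|_{H^1(E)}$, and sum over $E$.

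For the jump term, the key observation is that $p(t) \in H^1(\Omega)$ has a single-valued trace on each interior face, so $[p(t)] = 0$ on $e \in \Gamma_h$. Therefore $[\pi_h p(t)] = [\pi_h p(t) - p(t)]$ and on each side of $e$ I can apply the standard local trace inequality
$$ \|w\|^2_{L^2(\partial E)} \le C\bigl(h_E^{-1}\|w\|^2_{L^2(E)} + h_E\|\nabla w\|^2_{L^2(E)}\bigr) $$
to $w = \pi_h p(t) - p(t)$. Combining this with \eqref{eq:l2_proj_approximation} (for $s=1$) yields
$$ \|\pi_h p(t) - p(t)\|^2_{L^2(\partial E)} \le C\,h_E\,|p(t)|^2_{H^1(\Delta_E)}. $$
Using the mesh regularity assumption \eqref{eq:mesh_regular}, which implies $h_E \le C h_e$ whenever $e \subset \partial E$, I get
$$ \frac{\tilde\sigma}{h_e}\|[\pi_h p(t)]\|^2_{L^2(e)} \le C\,|p(t)|^2_{H^1(\Delta_{E_e^1} \cup \Delta_{E_e^2})}. $$
Summing over interior faces and invoking the bounded overlap of the macro patches $\Delta_E$ gives the bound by $C|p(t)|^2_{H^1(\Omega)}$.

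The argument is essentially routine; the only place where one must be careful is verifying that $h_E$ and $h_e$ are comparable under the regularity assumption \eqref{eq:mesh_regular}, and keeping track of the finite-overlap property of the macro patches $\Delta_E$ appearing in \eqref{eq:l2_proj_approximation} so that summation over elements and faces yields the global seminorm on the right-hand side. No new ideas beyond the approximation estimate, a trace inequality, and mesh regularity are needed.
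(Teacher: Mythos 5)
Your proof is correct. The paper does not actually prove this lemma---it is recalled from \cite{GiraultLiRiviere2016} with no argument given---so your write-up supplies a self-contained proof of a fact the authors only cite, and it is the standard one: triangle inequality plus \eqref{eq:l2_proj_approximation} with $s=1$ for the broken-gradient part, and, for the jump part, the observation that $[p(t)]=0$ on interior faces so that $[\pi_h p(t)]=[\pi_h p(t)-p(t)]$, followed by the scaled trace inequality, \eqref{eq:l2_proj_approximation} again, and the comparability $h_E\le Ch_e$ guaranteed by the shape-regularity assumption \eqref{eq:mesh_regular}. One small correction: the estimate \eqref{eq:l2_proj_approximation} for the local $L^2$ projection $\pi_h$ is stated purely elementwise, with $|p(t)|_{H^s(E)}$ on the right-hand side; the macro patches $\Delta_E$ appear only in the approximation properties \eqref{eq:approximation_prop_local_1}--\eqref{eq:approximation_prop_local_2} of the divergence-preserving operator $\Pi_h$. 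Consequently no finite-overlap bookkeeping is needed here---summing the squared local bounds over elements, and over the (boundedly many) faces adjacent to each element, immediately yields $C|p(t)|_{H^1(\Omega)}^2$. With that simplification the argument is complete and needs nothing beyond what you have written.
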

\subsection{Error equations}
We recall that the dG discretization is consistent, in the sense that the true solution $(\bm{u}, p)$  of \eqref{eq:first_eq_NS}-\eqref{eq:zero_avg} satisfies for $n \geq 1 $
\begin{multline}
    ((\partial_t \bm{u})^n, \bm{\theta}_h) + a_\mathcal{C}(\bm{u}^n;\bm{u}^n, \bm{u}^n, \bm{\theta}_h) + \mu a_{\epsilon} (\bm{u}^n, \bm{\theta}_h) \\ = b(\bm{\theta}_h, p^n) + (\bm{f}^n, \bm{\theta}_h), \quad \forall \bm{\theta}_h \in \bm{X}_h.  \label{eq:true_sol_first}
\end{multline}
For readability, we define the following discretization errors, $\bm{\tilde{e}}_h^{n}\in\bm{X}_h, \, \bm{e}_h^{n}\in\bm{X}_h$:
\begin{align}
\bm{\tilde{e}}_h^{n}= \bm{v}_h^n - \Pi_h \bm{u}^n, \quad \bm{e}_h^{n} = \bm{u}_h^n - \Pi_h \bm{u}^{n},  \quad \forall n\geq 0.  \label{eq:def_error_functions}
\end{align}
In the above, we set $\bm{v}_h^0 = \bm{u}_h^0$. Multiplying \eqref{eq:true_sol_first} by $\tau$ and subtracting it from \eqref{eq:intermidiate_velocity}, we have for all $n \geq 1$ 
\begin{multline}
    (\err, \bm{\theta}_h) + \tau a_\mathcal{C}(\bm{u}_h^{n-1};\bm{u}_h^{n-1}, \err, \bm{\theta}_h) +  \tau R_{\mathcal{C}} (\bm{\theta}_h) + \tau \mu a_{\epsilon}(\err, \bm{\theta}_h)\\  = (\bm{e}_h^{n-1}, \bm{\theta}_h) - \tau \mu a_{\epsilon}(\Pi_h \bm{u}^n - \bm{u}^n, \bm{\theta}_h )
  + \tau  b(\bm{\theta}_h, p_h^{n-1} - p^n) + R_t(\bm{\theta}_h), \quad  \forall \bm{\theta}_h \in \bm{X}_h.  \label{eq:first_err_eq}
\end{multline} 
In the above, we set 
\begin{align}
R_{\mathcal{C}} (\bm{\theta}_h) & = a_\mathcal{C}(\bm{u}_h^{n-1}; \bm{u}_h^{n-1}, \Pi_h \bm{u}^n, \bm{\theta}_h) -  a_\mathcal{C}(\bm{u}^{n}; \bm{u}^{n}, \bm{u}^n, \bm{\theta}_h), \label{eq:defRC}\\ 
R_{t}(\bm{\theta}_h) & = \tau((\partial_t \bm{u})^n, \bm{\theta}_h) + (\Pi_h \bm{u}^{n-1} - \Pi_h \bm{u}^{n}, \bm{\theta}_h).\label{eq:defRt}
\end{align}
With \eqref{eq:def_pi_h} and \eqref{eq:def_b_lift}, we have for all $n\geq 1$ 
\begin{equation}
b(\Pi_h \bm{u}^n,q_h) = (\nabla_h \cdot \Pi_h \bm{u}^n - R_h([\Pi_h \bm{u}^n]),q_h) = 0, \quad \forall q_h \in M_h. \label{eq:equivalent_def_Pi_h}
\end{equation}
Using \eqref{eq:equivalent_def_Pi_h} and \eqref{eq:pressure_correction}, we obtain for all $n\geq 1$
\begin{equation}
\aelip(\phi_h^n, q_h) = -\frac{1}{\tau} b(\bm{\tilde{e}}_h^n, q_h), \quad \forall q_h \in M_{h0}. 
\label{eq:error_pressure_correction}
\end{equation}
Inserting $\Pi_h \bm{u}^n$ in \eqref{eq:update_velocity} yields for all $n \geq 1$ 
\begin{equation}
    (\bm{e}_h^n, \bm{\theta}_h) = (\bm{\tilde{e}}_h^n, \bm{\theta}_h) -  \tau(\nabla_h \phi_h^n- \bm{G}_h([\phi^n_h]), \bm{\theta}_h), \quad \forall  \bm{\theta}_h \in \bm{X}_h. \label{eq:sec_err_eq}
\end{equation}
\subsection{Intermediate results }We proceed by showing some intermediate properties of the error functions. 
\begin{lemma}\label{prep:weak_div_error}  
Fix $q_h\in M_h$. The following holds for $n\geq 1$,
    \begin{multline}
        b(\bm{e}_h^n, q_h) = b(\err, q_h) + \tau\aelip(\phi_h^n,q_h)  \\ -\tau \sum_{e \in \Gamma_h} \frac{\tilde{\sigma}}{h_e} \int_e [\phi_h^n][q_h] + \tau(\bm{G}_h([\phi_h^n]), \bm{G}_h([q_h])).   \label{eq:div_error} 
    \end{multline}
    Further, for $ n \geq 1$, 
    \begin{align}
        b(\bm{e}_h^n, q_h) & = -\tau \sum_{e \in \Gamma_h} \frac{\tilde{\sigma}}{h_e} \int_e [\phi_h^n][q_h] + \tau(\bm{G}_h([\phi_h^n]), \bm{G}_h([q_h])). \label{eq:div_error_2}
    \end{align}
 In addition, for $n \geq 1$,
    \begin{align}
     \|\err - \bm{e}_h^{n-1}\|^2 =&  \|\bm{e}_h^{n} - \bm{e}_h^{n-1} \|^2 + \tau^2(\| \nabla_h \phi_h^n \|^2 + \|\bm{G}_h([\phi_h^n])\|^2) 
+ \tau^2 (A^n_1-A^n_2) \nonumber\\ 
&  +  \tau^2\left( \sum_{e\in\Gamma_h} \frac{\tilde{\sigma}}{h_e}\|[\phi_h^n - \phi_h^{n-1}]\|^2_{L^2(e)} - \|\bm{G}_h([\phi_h^n - \phi_h^{n-1}])\|^2\right)\nonumber \\
&   -2\tau^2(\nabla_h \phi_h^n, \bm{G}_h([\phi_h^n])) + 2\delta_{n,1}\tau b(\bm{e}_h^{0} , \phi_h^1) , \label{eq:error_diff}
    \end{align}
 where $\delta_{n,1}$ is the Kronecker delta and $A^n_1$ and $A^n_2$ for $n \geq 1$ are given by \begin{align}
     A^n_1 & = \sum_{e\in\Gamma_h} \frac{\tilde{\sigma}}{h_e}\|[\phi_h^n]\|^2_{L^2(e)} - \sum_{e\in\Gamma_h} \frac{\tilde{\sigma}}{h_e}\|[\phi_h^{n-1}]\|_{L^2(e)}^2, \label{eq:def_A1}\\ 
     A^n_2 & = \|\bm{G}_h([\phi_h^n]) \|^2 - \| \bm{G}_h([\phi_h^{n-1}])\|^2 \label{eq:def_A2}.
 \end{align}   
\end{lemma}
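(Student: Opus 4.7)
The plan is to mirror the structure of Lemma~\ref{prep:weak_divergence} for the error functions and then exploit the update relation \eqref{eq:sec_err_eq} to extract a clean decomposition of $\|\err - \bm e_h^{n-1}\|^2$.

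For \eqref{eq:div_error}, I would proceed exactly as in Lemma~\ref{prep:weak_divergence}: test \eqref{eq:sec_err_eq} first with $\bm\theta_h = \nabla_h q_h \in \bm X_h$ (which lies in $\bm X_h$ because $k_2 = k_1-1$) and then with $\bm\theta_h = \bm G_h([q_h])\in\bm X_h$, subtract the two identities, and recognize the left-hand side via \eqref{eq:def_b_lift_2} as $b(\bm e_h^n - \err, q_h)$. The resulting inner product $\tau(\nabla_h \phi_h^n - \bm G_h([\phi_h^n]), \nabla_h q_h - \bm G_h([q_h]))$ is expanded exactly as in \eqref{eq:attempt2_4} to produce $\aelip(\phi_h^n,q_h)$, the penalty term, and the lift product.

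For \eqref{eq:div_error_2}, I would split $q_h = \beta_h + \langle q_h\rangle$ with $\beta_h = q_h - \langle q_h\rangle \in M_{h0}$. On the constant piece, $b(\err,\langle q_h\rangle) = 0$ by \eqref{eq:def_b_lift_2} and $\aelip(\phi_h^n,\langle q_h\rangle) = 0$ trivially, while the penalty and lift terms also vanish on constants. On $\beta_h \in M_{h0}$, \eqref{eq:error_pressure_correction} gives $\tau\aelip(\phi_h^n,\beta_h) = -b(\err,\beta_h)$, cancelling the first two terms of \eqref{eq:div_error}.

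For the third identity \eqref{eq:error_diff}, the key observation is that because $\nabla_h\phi_h^n - \bm G_h([\phi_h^n]) \in \bm X_h$ (using $k_2 = k_1-1$), \eqref{eq:sec_err_eq} can be upgraded to the pointwise identity $\bm e_h^n = \err - \tau(\nabla_h\phi_h^n - \bm G_h([\phi_h^n]))$, so
\[
\err - \bm e_h^{n-1} = (\bm e_h^n - \bm e_h^{n-1}) + \tau\bigl(\nabla_h \phi_h^n - \bm G_h([\phi_h^n])\bigr).
\]
Squaring in $L^2$ produces three pieces: $\|\bm e_h^n - \bm e_h^{n-1}\|^2$; the expansion of $\tau^2\|\nabla_h\phi_h^n - \bm G_h([\phi_h^n])\|^2$, which yields $\tau^2\|\nabla_h\phi_h^n\|^2 + \tau^2\|\bm G_h([\phi_h^n])\|^2 - 2\tau^2(\nabla_h\phi_h^n, \bm G_h([\phi_h^n]))$; and the cross term $2\tau(\bm e_h^n - \bm e_h^{n-1}, \nabla_h\phi_h^n - \bm G_h([\phi_h^n]))$, which by \eqref{eq:def_b_lift_2} equals $-2\tau b(\bm e_h^n - \bm e_h^{n-1},\phi_h^n)$.

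The main technical step, and the one I expect to be slightly fiddly, is evaluating this cross term. For $n \geq 2$, I apply \eqref{eq:div_error_2} both to $b(\bm e_h^n,\phi_h^n)$ and to $b(\bm e_h^{n-1},\phi_h^n)$, then use the polarization identities $2\|a\|^2 - 2(a,b) = \|a\|^2 - \|b\|^2 + \|a-b\|^2$ for the penalty scalars and for the lift inner product to generate precisely $\tau^2(A_1^n - A_2^n)$ together with the differences $\sum \frac{\tilde\sigma}{h_e}\|[\phi_h^n-\phi_h^{n-1}]\|_{L^2(e)}^2$ and $\|\bm G_h([\phi_h^n-\phi_h^{n-1}])\|^2$. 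For $n = 1$, \eqref{eq:div_error_2} is unavailable at index $n-1 = 0$, so the term $2\tau b(\bm e_h^0,\phi_h^1)$ must be carried as a residual; however, since $\phi_h^0 = 0$, the convention $A_1^1 = \sum \frac{\tilde\sigma}{h_e}\|[\phi_h^1-\phi_h^0]\|_{L^2(e)}^2$ and $A_2^1 = \|\bm G_h([\phi_h^1 - \phi_h^0])\|^2$ produces exactly the same formula modulo the extra $2\tau b(\bm e_h^0,\phi_h^1)$, which is precisely the $2\delta_{n,1}\tau b(\bm e_h^0,\phi_h^1)$ correction appearing in \eqref{eq:error_diff}. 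Collecting all the pieces yields the stated identity.
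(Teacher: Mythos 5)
Your proposal is correct and follows essentially the same route as the paper: the identity \eqref{eq:div_error} via testing with $\nabla_h q_h$ and $\bm{G}_h([q_h])$, the reduction to $M_{h0}$ for \eqref{eq:div_error_2}, and the expansion of the cross term $-2\tau b(\bm{e}_h^n-\bm{e}_h^{n-1},\phi_h^n)$ via \eqref{eq:div_error_2} at steps $n$ and $n-1$ together with polarization, with the $n=1$ case isolated exactly as in \eqref{eq:error_diff}. The only cosmetic difference is that the paper obtains \eqref{eq:div_error} by subtracting $b(\Pi_h\bm{u}^n,q_h)=0$ from the already-proved \eqref{eq:preposition} rather than rederiving it from \eqref{eq:sec_err_eq}, which amounts to the same computation.
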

\begin{proof}
Using \eqref{eq:equivalent_def_Pi_h} in \eqref{eq:preposition}, we obtain \eqref{eq:div_error} for $n\geq 1$. 
 Substituting \eqref{eq:error_pressure_correction} in \eqref{eq:div_error}, we obtain \eqref{eq:div_error_2} for $n\geq 1$. To show \eqref{eq:error_diff}, we use \eqref{eq:sec_err_eq}: 
\begin{equation}
\begin{aligned}
&\|\err - \bm{e}_h^{n-1}\|^2 = (\err - \bm{e}_h^{n-1}, \err - \bm{e}_h^{n-1}) \label{eq:expanding_eh} \\
& = (\bm{e}_h^n - \bm{e}_h^{n-1} + \tau(\nabla_h \phi_h^n -\bm{G}_h([\phi_h^n])),\bm{e}_h^n - \bm{e}_h^{n-1} + \tau(\nabla_h\phi_h^n -\bm{G}_h([\phi_h^n])))   \\ 
& = \|\bm{e}_h^n - \bm{e}_h^{n-1} \|^2 + 2\tau(\bm{e}_h^n - \bm{e}_h^{n-1}, \nabla_h \phi_h^n -\bm{G}_h([\phi_h^n]))  + \tau^2 \|\nabla_h \phi_h^n -\bm{G}_h([\phi_h^n]) \|^2. 
\end{aligned}
\end{equation}
For the second term, we use the equivalent definition \eqref{eq:def_b_lift_2}.
\begin{align}
    (\bm{e}_h^n - \bm{e}_h^{n-1}, \nabla_h \phi_h^n -\bm{G}_h([\phi_h^n]))  = - b(\bm{e}_h^n - \bm{e}_h^{n-1}, \phi_h^n).
\end{align}
Since $\phi_h^0 = 0$, expanding the last term in \eqref{eq:expanding_eh} and using \eqref{eq:div_error_2} yield \eqref{eq:error_diff} for $n =1$. For $n \geq 2$, we  subtract \eqref{eq:div_error_2} at time step  $n$ from \eqref{eq:div_error_2} at time step $n-1$.  We obtain for $n \geq 2$ 
\begin{multline*}
-b(\bm{e}_h^n - \bm{e}_h^{n-1}, \phi_h^n) = \tau \sum_{e\in \Gamma_h} \frac{\tilde{\sigma}}{h_e} \int_e ([\phi_h^n]- [\phi_h^{n-1}]) [\phi_h^n] \\ - \tau (\bm{G}_h([\phi_h^n]) - \bm{G}_h([\phi_h^{n-1}]),\bm{G}_h([\phi_h^n])).  
\end{multline*} 
Thus, for $n\geq 2$,  we have 
\begin{multline}
    (\bm{e}_h^n - \bm{e}_h^{n-1}, \nabla_h \phi_h^n -\bm{G}_h([\phi_h^n]))  = \frac{\tau}{2} \sum_{e\in \Gamma_h} \frac{\tilde{\sigma}}{h_e} (\|[\phi_h^n]\|_{L^2(e)}^2 - \|[\phi_h^{n-1}]\|_{L^2(e)}^2  \\ + \|[ \phi_h^n - \phi_h^{n-1}]\|_{L^2(e)}^2) 
    - \frac{\tau}{2} ( \|\bm{G}_h([\phi_h^n]) \|^2 - \| \bm{G}_h([\phi_h^{n-1}])\|^2 + \| \bm{G}_h([\phi_h^n - \phi_h^{n-1}])\|^2 ). \label{eq:expanding_gh}
\end{multline}
Substituting \eqref{eq:expanding_gh} in \eqref{eq:expanding_eh} and expanding the last term in \eqref{eq:expanding_eh} yield \eqref{eq:error_diff} for $n \geq 2$.
\end{proof}
We proceed by presenting some bounds for the forms $a_{\mathcal{C}}, \mathcal{U}$, and $\mathcal{C}$. The proof of these bounds is inspired from the proof of proposition 4.1 in \cite{girault2005splitting}. However, the proof differs since in our scheme, we do not have that $b(\bm{e}_h^{n}, q_h) = 0$ for all $q_h \in M_h$.
\begin{lemma} \label{prep:bounds_nonlinear_terms}
Fix $\phi_h\in M_h$.  Assume that there is $\bm{w}_h \in \bm{X}_h$ such that
\begin{equation}
    b(\bm{w}_h, q_h ) =  -\tau \sum_{e \in \Gamma_h} \frac{\tilde{\sigma}}{h_e} \int_e [\phi_h ][q_h] + \tau(\bm{G}_h([\phi_h ]), \bm{G}_h([q_h])), \quad \forall q_h \in M_h.  \label{eq:assum_on_w}
\end{equation}
Then, there exists a constant $C$, independent of $h$, $\tau$, $\bm{w}_h, \bm{z}, \bm{v}$ and $\bm{\theta}_h$, such that the following estimates hold. 
\begin{enumerate}[(i)]
\item If $\bm{v} \in (W^{1,3}(\Omega)\cap H_0^1(\Omega) \cap L^{\infty}(\Omega))^d$, then for any $\bm{z} \in \bm{X}$ and $\bm{\theta}_h \in \bm{X}_h$:
    \begin{equation}
    |a_\mathcal{C}(\bm{z}; \bm{w}_h, \bm{v}, \bm{\theta}_h)| \leq 
C \left(\|\bm{w}_h \| + \tau \left( \sum_{e \in \Gamma_h} \frac{\tilde{\sigma}}{h_e} \| [ \phi_h ] \|_{L^2(e)}^2 \right)^{1/2}\right) 
\vertiii{\bm{v}} \| \bm{\theta}_h\|_{\DG}.
\label{eq:first_estimate_nonlinear_form}
\end{equation}
\item If $\bm{v} \in (H^{k+1}(\Omega) \cap  H^1_0(\Omega))^d$, then for any $\bm{\theta}_h \in \bm{X}_h$:
\begin{multline}
|\mathcal{C}(\bm{w}_h, \bm{v} - \Pi_h \bm{v}, \bm{\theta}_h)|\\ \leq C \left(\|\bm{w}_h\| + \tau \left( \sum_{e \in \Gamma_h} \frac{\tilde{\sigma}}{h_e} \| [\phi_h  ] \|_{L^2(e)}^2 \right)^{1/2} \right)\vertiii{\bm{v}}\| \bm{\theta}_h\|_{\DG}.\label{eq:second_estimate_nonlinear_term_C1}
\end{multline}
\end{enumerate}
\end{lemma}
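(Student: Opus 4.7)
The plan is to express the divergence and face-jump contributions inside $\mathcal{C}$ as $\tfrac12 b(\bm{w}_h,\cdot)$, so that the hypothesis \eqref{eq:assum_on_w} can be invoked after projecting the second argument onto $M_h$. Comparing directly with the definition of $b$ one finds
$$\mathcal{C}(\bm{w}_h,\bm{\psi},\bm{\theta}_h) = \sum_{E\in\mesh_h}\int_E(\bm{w}_h\cdot\nabla)\bm{\psi}\cdot\bm{\theta}_h + \tfrac{1}{2}b(\bm{w}_h,\bm{\psi}\cdot\bm{\theta}_h), \quad \forall \bm{\psi}\in\bm{X}.$$
For part (i), because $\bm{v}\in H^1_0(\Omega)^d$ is single-valued across interior faces and vanishes on $\partial\Omega$, one checks that $\bm{v}^{\mathrm{int}}=\bm{v}^{\mathrm{ext}}$ on every face, so $\mathcal{U}(\bm{z};\bm{w}_h,\bm{v},\bm{\theta}_h)=0$ and $a_\mathcal{C}=\mathcal{C}$. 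For part (ii) the same identity is applied with $\bm{\psi}:=\bm{v}-\Pi_h\bm{v}$.

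The pure convection integral is handled by Hölder with exponents $(2,3,6)$ and the discrete Poincaré--Sobolev estimate of Lemma~\ref{lemma:Poincare_inequality}, yielding $C\|\bm{w}_h\|\,|\bm{\psi}|_{W^{1,3}}\,\|\bm{\theta}_h\|_{\DG}$, which is bounded by $C\|\bm{w}_h\|\vertiii{\bm{v}}\|\bm{\theta}_h\|_{\DG}$ (using stability \eqref{eq:approximation_prop_local_2} of $\Pi_h$ in $W^{1,3}$ for case (ii)). For the $b$-term, let $P:L^2(\Omega)\to M_h$ denote the elementwise $L^2$-projection and write
$$b(\bm{w}_h,\bm{\psi}\cdot\bm{\theta}_h) = b(\bm{w}_h,P(\bm{\psi}\cdot\bm{\theta}_h)) + b(\bm{w}_h,(I-P)(\bm{\psi}\cdot\bm{\theta}_h)).$$
The compatibility $k_2\geq k_1-1$ forces $\nabla_h\cdot\bm{w}_h\in M_h$, so the volume part of the $(I-P)$ contribution vanishes by orthogonality, and the surviving face integral is bounded by Cauchy--Schwarz together with the polynomial trace inequality $\|[\bm{w}_h]\|_{L^2(e)}\lesssim h_E^{-1/2}\|\bm{w}_h\|_{L^2(E_1\cup E_2)}$, the projection-error estimate $\|(I-P)f\|_{L^2(e)}\lesssim h_E^{1/2}\|\nabla f\|_{L^2(E)}$, and the Leibniz bound $\|\nabla(\bm{\psi}\cdot\bm{\theta}_h)\|_{L^2}\lesssim\vertiii{\bm{v}}\|\bm{\theta}_h\|_{\DG}$ (the latter using \eqref{eq:stability_linf_pih} to control $\|\bm{\psi}\|_{L^\infty}$ in (ii)); this delivers the $C\|\bm{w}_h\|\vertiii{\bm{v}}\|\bm{\theta}_h\|_{\DG}$ part of the stated bound. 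The $P$-piece, by hypothesis \eqref{eq:assum_on_w} applied to $q_h=P(\bm{\psi}\cdot\bm{\theta}_h)\in M_h$, together with Cauchy--Schwarz and Lemma~\ref{lemma:properties_lift_operators}, reduces the problem to showing
$$\Bigl(\sum_{e\in\Gamma_h} h_e^{-1}\|[P(\bm{\psi}\cdot\bm{\theta}_h)]\|_{L^2(e)}^2\Bigr)^{1/2} \leq C\vertiii{\bm{v}}\|\bm{\theta}_h\|_{\DG},$$
which then produces the $\tau$-dependent term.

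To establish this last inequality, I would decompose $[P(\bm{\psi}\cdot\bm{\theta}_h)]|_e$ on an interior face into the two elementwise projection errors $\pm(I-P)(\bm{\psi}\cdot\bm{\theta}_h)|_{E_i}$ restricted to $e$, plus the jump $[\bm{\psi}\cdot\bm{\theta}_h]=\{\bm{\psi}\}\cdot[\bm{\theta}_h]+[\bm{\psi}]\cdot\{\bm{\theta}_h\}$, and bound each contribution via local trace, approximation of $P$, the $L^\infty$ control of $\bm{\psi}$ (trivial in (i), from \eqref{eq:stability_linf_pih} in (ii)), and Hölder. The main obstacle is ensuring that the $h_e^{-1}$ weight on the left is absorbed by the $h_E^{1/2}$ gained from the trace/approximation estimate combined with mesh regularity $h_e\sim h_E$; the compatibility $k_2\geq k_1-1$ is crucial here and also earlier, since it was what killed the volume residual of $\nabla_h\cdot\bm{w}_h$ against $(I-P)(\bm{\psi}\cdot\bm{\theta}_h)$.
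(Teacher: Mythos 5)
Your proposal is correct in outline, and it reaches the same two estimates by the same skeleton as the paper (the identity $\mathcal{C}(\bm{w}_h,\bm{\psi},\bm{\theta}_h)=\sum_E\int_E(\bm{w}_h\cdot\nabla\bm{\psi})\cdot\bm{\theta}_h+\tfrac12 b(\bm{w}_h,\bm{\psi}\cdot\bm{\theta}_h)$, H\"{o}lder with $(2,3,6)$ plus the discrete Poincar\'{e} inequality for the convection integral, and an application of \eqref{eq:assum_on_w} to an $M_h$-valued surrogate of $\bm{\psi}\cdot\bm{\theta}_h$), but the central decomposition is genuinely different. The paper takes the surrogate to be the product of cell averages $\bm{c}_1\cdot\bm{c}_2$ (which lies in $M_h$ as soon as $k_2\geq 0$) and controls the residual $b(\bm{w}_h,\bm{\psi}\cdot\bm{\theta}_h-\bm{c}_1\cdot\bm{c}_2)$ by pairing an inverse estimate on $\nabla_h\cdot\bm{w}_h$ (costing $h_E^{-1}$) with the elementwise Poincar\'{e} gain \eqref{eq:prop_zero_avg}; you instead take the surrogate to be the elementwise $L^2$ projection $P(\bm{\psi}\cdot\bm{\theta}_h)$, which makes the volume residual vanish exactly by orthogonality --- but only because $\nabla_h\cdot\bm{w}_h\in M_h$, i.e.\ your route genuinely uses $k_2\geq k_1-1$, whereas the paper's proof of this lemma does not (it is a standing assumption, so this is admissible, just less general). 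Both routes then reduce to bounding $\bigl(\sum_e h_e^{-1}\|[q_h]\|^2_{L^2(e)}\bigr)^{1/2}$ for the chosen surrogate by $C\vertiii{\bm{v}}\|\bm{\theta}_h\|_{\DG}$, and your decomposition $[Pf]=[f]-[(I-P)f]$ plays the role of the paper's $T_1,T_2$ analysis. The one place your sketch is loosest is the cross term $[\bm{v}-\Pi_h\bm{v}]\cdot\{\bm{\theta}_h\}$ in part (ii): plain $L^\infty$ control of $\bm{\psi}$ cannot close it, since $h_e^{-1/2}\|\{\bm{\theta}_h\}\|_{L^2(e)}$ is not controlled by $\|\bm{\theta}_h\|_{\DG}$ (only the jump enters the energy norm); one must pair the $O(h_E)$ approximation gain of $[\bm{v}-\Pi_h\bm{v}]$ measured in $L^3(e)$ with an $L^6(e)$ trace bound on $\{\bm{\theta}_h\}$, exactly as in the paper's estimate of $\tilde{T}_2^2$ --- you flag this absorption issue as the main obstacle and the listed ingredients do resolve it, so this is a matter of writing out the right H\"{o}lder pairing rather than a gap in the argument.
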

\begin{proof}
(i) Since $\bm{v}$ belongs to $H^1(\Omega)^d$ and vanishes on the boundary, we have: 
\begin{equation}
 a_\mathcal{C}(\bm{z}; \bm{w}_h, \bm{v}, \bm{\theta}_h) =  \sum_{E \in \mesh_h} \int_{E} (\bm{w}_h \cdot \nabla \bm{v}) \cdot \bm{\theta}_h  + \frac{1}{2} b(\bm{w}_h, \bm{v} \cdot \bm{\theta}_h). \label{eq:ident_cont_u}
\end{equation}
With H\"{o}lder's inequality and \eqref{eq:discreter_poincare}, we have 
\begin{multline} \label{eq:first_nonlinear_estimate_0}
   \left|  \sum_{E \in \mesh_h} \int_{E} (\bm{w}_h \cdot \nabla \bm{v} ) \cdot \bm{\theta}_h \right| \leq  \| \bm{w}_h \|\| \nabla \bm{v} \|_{L^3(\Omega)} \|\bm{\theta}_h\|_{L^{6}(\Omega)}\\ \leq C_P\| \bm{w}_h \| \vert \bm{v} \vert_{W^{1,3}(\Omega)} \|\bm{\theta}_h\|_{\DG}. 
\end{multline}
To bound the second term in \eqref{eq:ident_cont_u}, we use a similar argument as in \cite{girault2005splitting}. Define $\bm{c}_1 , \bm{c}_2$ as piecewise constant vectors where on each element $ E \in \mesh_h$, 
\begin{equation}\label{eq:def_c1_c2}
    \bm{c}_1|_E = \frac{1}{|E|} \int_{E} \bm{v}, \quad \bm{c}_2|_E = \frac{1}{|E| } \int_E \bm{\theta}_h. 
\end{equation}
We then write, 
\begin{equation}
   \frac{1}{2} b(\bm{w}_h, \bm{v} \cdot \bm{\theta}_h) = \frac{1}{2} b(\bm{w}_h, \bm{v} \cdot \bm{\theta}_h - \bm{c}_1\cdot \bm{c}_2) + \frac{1}{2} b(\bm{w}_h, \bm{c}_1 \cdot \bm{c}_2). \label{eq:splitting_b_cont_u}
\end{equation}
The first term is bounded in Proposition 4.1 in \cite{girault2005splitting} in 2D domains. We follow a similar technique to obtain a bound in 2D/3D domains. We have, 
\begin{multline}
\frac{1}{2} b(\bm{w}_h, \bm{v}\cdot \bm{\theta}_h - \bm{c}_1 \cdot \bm{c}_2)  = \frac{1}{2}\sum_{E \in \mesh_h} \int_E (\nabla \cdot \bm{w}_h) (\bm{v}\cdot \bm{\theta}_h - \bm{c}_1 \cdot \bm{c}_2)\\ - \frac{1}{2}\sum_{e \in \Gamma_h \cup \partial \Omega} \int_e \{\bm{v}\cdot \bm{\theta}_h - \bm{c}_1 \cdot \bm{c}_2 \} [\bm{w}_h]\cdot\bm{n}_e. \label{eq:637}
\end{multline}
For the first term, note that by H\"{o}lder's, inverse and triangle inequalities, we have:
\begin{align}
 \Big|\int_{E} (\nabla \cdot &\bm{w}_h)   (\bm{v}\cdot \bm{\theta}_h - \bm{c}_1 \cdot \bm{c}_2)\Big| \nonumber\\
& \leq  C h_{E}^{-1}\|\bm{w}_h\|_{L^2(E)} (\|(\bm{v} - \bm{c}_1) \cdot \bm{\theta}_h\|_{L^2(E)} + \| \bm{c}_1\cdot(\bm{\theta}_h - \bm{c}_2)\|_{L^2(E)}) \nonumber  \\ 
& \leq  C h_{E}^{-1}\|\bm{w}_h\|_{L^2(E)}
(\|\bm{v}-\bm{c}_1\|_{L^{3}(E)} \| \bm{\theta}_h \|_{L^{6}(E)} + \| \bm{c}_1 \|_{L^{\infty}(E)} \| \bm{\theta}_h - \bm{c}_2\|_{L^2(E)}). \nonumber
\end{align}   
To proceed, we recall Poincare's inequality, see Theorems 1 and 2 in Section 5.8 in \cite{evans2010partial}. For $r \in [1,\infty], E \in \mesh_h$ and $\bm{\theta}$ in $W^{1,r}(E)$,  
\begin{equation}
   \Vert \bm{\theta} - \frac{1}{|E|}\int_{E} \bm{\theta} \Vert_{L^r(E)} \leq C h_E \| \nabla \bm{\theta}\|_{L^r(E)}. \label{eq:prop_zero_avg}
\end{equation}
Applying \eqref{eq:prop_zero_avg}, we find 
\begin{align}
    \left|\int_{E} (\nabla \cdot \bm{w}_h)(\bm{v}\cdot \bm{\theta}_h - \bm{c}_1 \cdot \bm{c}_2)\right| \leq C \|\bm{w}_h\|_{L^2(E)} (\vert \bm{v}\vert_{W^{1,3}(E)} \| \bm{\theta}_h\|_{L^{6}(E)} \\ + \|\bm{v}\|_{L^{\infty}(E)}\|\nabla \bm{\theta}_h\|_{L^2(E)}). \nonumber
\end{align} 
To handle the second term in the right-hand side of \eqref{eq:637}, consider a face $e\in \Gamma_h $ and let $E_e^1$ and $E_e^2$ denote the elements sharing $e$. Terms involving faces $e \in \partial \Omega$ are handled similarly.  With Cauchy-Schwarz's inequality, we have 
\begin{multline*}
    \vert \int_e \{ \bm{v}\cdot\bm{\theta}_h -\bm{c}_1\cdot\bm{c}_2\} [\bm{w}_h]\cdot\bm{n}_e \vert 
    \leq \frac12 \sum_{i,j = 1}^2 \Vert (\bm{v}\cdot\bm{\theta}_h -\bm{c}_1\cdot\bm{c}_2)|_{E_e^i} \Vert_{L^2(e)} \Vert \bm{w}_h|_{E_e^j} \Vert_{L^2(e)}.
\end{multline*} 
Let us consider the case $i = 1$ and $j=2$; the other terms are handled similarly:
\begin{align*}
\Vert (\bm{v}\cdot\bm{\theta}_h -\bm{c}_1\cdot\bm{c}_2)|_{E_e^1} \Vert_{L^2(e)} \Vert \bm{w}_h|_{E_e^2} \Vert_{L^2(e)}
\leq \Vert (\bm{v}-\bm{c}_1)|_{E_e^1} \cdot\bm{\theta}_h|_{E_e^1} \Vert_{L^2(e)} \Vert \bm{w}_h|_{E_e^2} \Vert_{L^2(e)}
\\
+ \Vert \bm{c}_1|_{E_e^1} \cdot(\bm{\theta}_h-\bm{c}_2)|_{E_e^1} \Vert_{L^2(e)} \Vert \bm{w}_h|_{E_e^2} \Vert_{L^2(e)}.
\end{align*}
With a trace inequality and H\"{o}lder's inequality, we have
\begin{align*}
\Vert (\bm{v}\cdot\bm{\theta}_h -\bm{c}_1\cdot\bm{c}_2)|_{E_e^1} \Vert_{L^2(e)} \Vert \bm{w}_h|_{E_e^2} \Vert_{L^2(e)}
\leq C \vert e \vert \vert E_e^1\vert^{-\frac12} \vert E_e^2\vert^{-\frac12} \Vert \bm{w}_h\Vert_{L^2(E_e^2)}\\
\times \left( (\Vert \bm{v}-\bm{c}_1\Vert_{L^3(E_e^1)} +  h_{E_e^1} \Vert \nabla \bm{v}\Vert_{L^3(E_e^1)}) \Vert \bm{\theta}_h\Vert_{L^6(E_e^1)} 
+  \Vert \bm{v}\Vert_{L^\infty(E_e^1)} \Vert \bm{\theta}_h - \bm{c}_2 \Vert_{L^2(E_e^1)}\right).
\end{align*} 
With \eqref{eq:prop_zero_avg}, this becomes
\begin{align}
\Vert (\bm{v}\cdot\bm{\theta}_h -\bm{c}_1\cdot\bm{c}_2)|_{E_e^1} \Vert_{L^2(e)} \Vert \bm{w}_h|_{E_e^2} \Vert_{L^2(e)}
\leq C \vert e \vert \vert E_e^1\vert^{-\frac12} \vert E_e^2\vert^{-\frac12} \Vert \bm{w}_h\Vert_{L^2(E_e^2)}\nonumber\\
\times h_{E_e^1}  \left(\Vert \nabla \bm{v}\Vert_{L^3(E_e^1)} \Vert \bm{\theta}_h\Vert_{L^6(E_e^1)} 
+ \Vert \bm{v}\Vert_{L^\infty(E_e^1)} \Vert \nabla \bm{\theta}_h\Vert_{L^2(E_e^1)}\right)
\nonumber\\
\leq C \Vert \bm{w}_h\Vert_{L^2(E_e^2)}  \left(\Vert \nabla \bm{v}\Vert_{L^3(E_e^1)} \Vert \bm{\theta}_h\Vert_{L^6(E_e^1)}
+ \Vert \bm{v}\Vert_{L^\infty(E_e^1)} \Vert \nabla \bm{\theta}_h\Vert_{L^2(E_e^1)}\right).
\label{eq:detailtrace}
\end{align} 
Therefore, with discrete H\"{o}lder's inequality and \eqref{eq:discreter_poincare}, we obtain 
\[
 \frac12 \sum_{e\in\Gamma_h\cup\partial\Omega} 
\Vert (\bm{v}\cdot\bm{\theta}_h -\bm{c}_1\cdot\bm{c}_2)|_{E_e^1} \Vert_{L^2(e)} \Vert \bm{w}_h|_{E_e^2} \Vert_{L^2(e)}
\leq C \Vert \bm{w}_h\Vert \, \vertiii{\bm{v}} \|\bm{\theta}_h \|_{\DG}.
\]
We bound the other terms similarly, and combining the bounds, we obtain
\begin{align}
    \frac{1}{2} |b(\bm{w}_h, \bm{v} \cdot \bm{\theta}_h - \bm{c}_1\cdot \bm{c}_2)| \leq C  \|\bm{w}_h\|\vertiii{\bm{v}} \|\bm{\theta}_h \|_{\DG}.\label{eq:first_nonlinear_estimate_1}
\end{align}
It remains to handle the second term in the right-hand side of \eqref{eq:splitting_b_cont_u}. To this end, we use \eqref{eq:assum_on_w}. 
\begin{equation}
\frac{1}{2} b(\bm{w}_h, \bm{c}_1 \cdot \bm{c}_2) = -\frac{\tau}{2} \sum_{e \in \Gamma_h} \frac{\tilde{\sigma}}{h_e} \int_e [ \phi_h ][\bm{c}_1 \cdot \bm{c}_2] + \frac{\tau}{2}(\bm{G}_h([ \phi_h ]), \bm{G}_h([\bm{c}_1 \cdot \bm{c}_2])).
\end{equation}
Using Cauchy-Schwarz's inequality and \eqref{eq:lift_prop_g}, we have 
\begin{equation} \label{eq:first_bd_b_c1c2}
    \frac{1}{2} |b(\bm{w}_h, \bm{c}_1 \cdot \bm{c}_2)|  \leq C \tau \left( \sum_{e \in \Gamma_h} \frac{\tilde{\sigma}}{h_e} \| [ \phi_h ] \|_{L^2(e)}^2 \right)^{1/2} \left( \sum_{e \in \Gamma_h} \frac{\tilde{\sigma}}{h_e} \| [\bm{c}_1 \cdot \bm{c}_2] \|_{L^2(e)}^2 \right)^{1/2}. 
\end{equation}
We write
\begin{align}
 \sum_{e \in \Gamma_h} \frac{\tilde{\sigma}}{h_e} \| [\bm{c}_1 \cdot \bm{c}_2] \|_{L^2(e)}^2  & = \sum_{e\in \Gamma_h} \frac{\tilde{\sigma}}{h_e}\| [\bm{c}_1 \cdot (\bm{c}_2 - \bm{\theta}_h)] + [\bm{c}_1 \cdot \bm{\theta}_h]\|^2_{L^2(e)} \nonumber \\ 
 & \leq 2 \sum_{e\in \Gamma_h} \frac{\tilde{\sigma}}{h_e}  \| [\bm{c}_1 \cdot (\bm{c}_2 - \bm{\theta}_h)]  \|^2_{L^2(e)} + 2 \sum_{e\in\Gamma_h} \frac{\tilde{\sigma}}{h_e} \|[\bm{c}_1 \cdot \bm{\theta}_h]\|^2_{L^2(e)} \nonumber\\  
 &= T_1 + T_2. \label{eq:def_T1_T2}
\end{align}
To bound $T_1$, we apply a trace inequality and \eqref{eq:prop_zero_avg}, as it was done in \eqref{eq:detailtrace}. 
Consider the elements $E_e^1$ and $E_e^2$ that share  $e$. We have, for $i=1,2$:
\begin{align*}
    \frac{1}{\sqrt{h_e}} \|\bm{c}_1 \cdot (\bm{c}_2 - \bm{\theta}_h) \vert_{E_e^i} \|_{L^2(e)} 
&\leq C \vert e \vert^{\frac12 - \frac{1}{2(d-1)}} \vert E_e^i\vert^{-\frac12} h_{E_e^i} \|\bm{v}\|_{L^{\infty}(E_e^i)} \| \nabla \bm{\theta}_h \|_{L^2(E_e^i)}\\
&\leq C  \|\bm{v}\|_{L^{\infty}(E_e^i)} \| \nabla \bm{\theta}_h \|_{L^2(E_e^i)}.
\end{align*}
Hence, we obtain the following bound on $T_1$. 
\begin{align}  \label{eq:bd_T1}
    T_1 &\leq C \| \bm{v} \|^2_{L^{\infty}(\Omega)} \|\bm{\theta}_h\|^2_{\DG}.
\end{align} 
To handle $T_2$, we split it as such. 
\begin{align}
    [\bm{c}_1 \cdot \bm{\theta}_h] & = [(\bm{c}_1 - \bm{v})\cdot \bm{\theta}_h] + [\bm{v} \cdot \bm{\theta}_h]. \nonumber 
\end{align}
Thus, we have 
\begin{align}
    T_2 &\leq 4 \sum_{e\in \Gamma_h} \frac{\tilde{\sigma}}{h_e} \| [(\bm{c}_1 - \bm{v})\cdot \bm{\theta}_h] \|^2_{L^2(e)}  + 4\sum_{e\in \Gamma_h} \frac{\tilde{\sigma}}{h_e} \|[\bm{v} \cdot \bm{\theta}_h]\|^2_{L^2(e)} =  T_2^1 + T_2^2. \label{eq:def_t21_t22}
\end{align}
To bound $T_2^1$, we will use a trace inequality, H\"{o}lder's inequalities, \eqref{eq:prop_zero_avg}, and \eqref{eq:discreter_poincare}. 
We skip the details as the argument is similar to the one used in \eqref{eq:detailtrace}.
\begin{equation}\label{eq:bd_t21}
    T_2^1 \leq C (\vert \bm{v}\vert^2_{W^{1,3}(\Omega)} + \|\bm{v}\|^2_{L^\infty(\Omega)}) \| \bm{\theta}_h \|^2_{\DG}.
\end{equation}
The term $T_2^2$ is simply bounded as such, 
\begin{equation}\label{eq:bd_t22}
T_2^2 \leq 4 \| \bm{v}\|^2_{L^{\infty}(\Omega)} \sum_{e\in\Gamma_h} \frac{\tilde{\sigma}}{h_e} \|[\bm{\theta}_h]\|^2_{L^2(e)} \leq 4 \| \bm{v}\|^2_{L^{\infty}(\Omega)}  \| \bm{\theta}_h \|_{\DG}^2.  
\end{equation}
Substituting bounds \eqref{eq:bd_T1}, \eqref{eq:bd_t21} and \eqref{eq:bd_t22} in \eqref{eq:first_bd_b_c1c2}, we obtain 
\begin{equation}\label{eq:first_nonlinear_estimate_2}
\frac{1}{2}| b(\bm{w}_h, \bm{c}_1 \cdot \bm{c}_2) | \leq C\tau \vertiii{\bm{v}}\|
\bm{\theta}_h\|_{\DG} \left(\sum_{e\in \Gamma_h} \frac{\tilde{\sigma}}{h_e} \|[\phi_h] \|^2_{L^2(e)}\right)^{1/2}.
\end{equation}
With \eqref{eq:first_nonlinear_estimate_1} and \eqref{eq:first_nonlinear_estimate_2}, we have the following bound: 
\begin{equation}
\frac{1}{2} |b(\bm{w}_h, \bm{v} \cdot \bm{\theta}_h)| \leq C  \left(\| \bm{w}_h \| + \left(\sum_{e\in \Gamma_h} \frac{\tilde{\sigma}}{h_e} \|[\phi_h] \|^2_{L^2(e)}\right)^{1/2}\right)\vertiii{\bm{v}}\|\bm{\theta}_h\|_{\DG}.  \label{eq:bound_form_b}
\end{equation}
We conclude the proof of \eqref{eq:first_estimate_nonlinear_form} by combining bounds \eqref{eq:first_nonlinear_estimate_0} and \eqref{eq:bound_form_b}. 

(ii) First, we remark that by a Sobolev embedding, $\bm{v} \in (W^{1,3}(\Omega) \cap L^{\infty}(\Omega))^d$ since $k\geq 1$. We have, \begin{multline}
 \mathcal{C}(\bm{w}_h, \bm{v}- \Pi_h \bm{v}, \bm{\theta}_h) =  \sum_{E \in \mesh_h} \int_{E} (\bm{w}_h \cdot \nabla (\bm{v} - \Pi_h\bm{v} ) )\cdot \bm{\theta}_h \\   + \frac{1}{2} b(\bm{w}_h, (\bm{v} -\Pi_h \bm{v}) \cdot \bm{\theta}_h). \label{eq:expanding_C} 
\end{multline}
The first term is bounded exactly like in the proof of (i) (see the derivation of bound \eqref{eq:first_nonlinear_estimate_0}) with $\bm{v}$ replaced by $\bm{v}- \Pi_h\bm{v}$. By the stability of the interpolant, we have
\begin{equation}
     \sum_{E \in \mesh_h} \left| \int_{E} (\bm{w}_h \cdot \nabla (\bm{v}- \Pi_h\bm{v} ) )\cdot \bm{\theta}_h \right|  
 \leq C \|\bm{w}_h\| |\bm{v}|_{W^{1,3}(\Omega)} \| \bm{\theta}_h \|_{\DG}. \label{eq:third_nonlinear_term_0}
\end{equation}
The second term is split in the same way as in the proof of (i). 
\begin{multline}
    \frac{1}{2} b(\bm{w}_h, (\bm{v} -\Pi_h \bm{v}) \cdot \bm{\theta}_h)   =    \frac{1}{2} b(\bm{w}_h, (\bm{v} -\Pi_h \bm{v}) \cdot \bm{\theta}_h - \tilde{\bm{c}}_1 \cdot \bm{c}_2) \\ + \frac{1}{2} b(\bm{w}_h, \tilde{\bm{c}}_1 \cdot \bm{c}_2), 
\end{multline}
where $\bm{c}_2$ is defined in \eqref{eq:def_c1_c2} and $\tilde{\bm{c}}_1 $ is a piecewise constant vector defined on each element $E \in \mesh_h$ as $$\tilde{\bm{c}}_1 \vert_{E}  = \frac{1}{|E|} \int_E (\bm{v} - \Pi_h \bm{v}).$$
Similar to \eqref{eq:first_nonlinear_estimate_1} and using the stability of the interpolant \eqref{eq:stability_linf_pih}, we have the bound 
\begin{multline}
\frac{1}{2} |b(\bm{w}_h, (\bm{v} -\Pi_h \bm{v}) \cdot \bm{\theta}_h - \tilde{\bm{c}}_1 \cdot \bm{c}_2) | \\ \leq C\|\bm{w}_h \|(\| \nabla_h(\bm{v} - \Pi_h \bm{v})\|_{L^3(\Omega)}+ \|\bm{v} - \Pi_h \bm{v} \|_{L^\infty(\Omega)}) \|\bm{\theta}_h\|_{\DG} 
 \leq C \|\bm{w}_h \| \, \vertiii{\bm{v}} \, \|\bm{\theta}_h \|_{\DG}.\label{eq:second_nonlinear_estimate_b}  
\end{multline}
With similar arguments as equations \eqref{eq:first_bd_b_c1c2}, \eqref{eq:def_T1_T2} and \eqref{eq:def_t21_t22},  we have 
\begin{equation}
\frac{1}{2} | b(\bm{w}_h, \tilde{\bm{c}}_1 \cdot \bm{c}_2)| \leq C \tau  \left( \sum_{e \in \Gamma_h} \frac{\tilde{\sigma}}{h_e} \| [ \phi_h] \|_{L^2(e)}^2 \right)^{1/2}(\tilde{T}_1 + \tilde{T}_2^1 +\tilde{T}_2^2)^{1/2}.
\end{equation}
Here $\tilde{T}_1$,  $\tilde{T}_2^1$ and $\tilde{T}_2^2$ are defined in a similar way to $T_1,T_2^1$ and $T_2^2$ with $\bm{c}_1$ replaced by $\tilde{\bm{c}}_1$ and $\bm{v}$ replaced by $\bm{v} - \Pi_h\bm{v}$. The terms $\tilde{T}_1$ and $\tilde{T}_2^1$ are bounded exactly like $T_1$ \eqref{eq:bd_T1} and $T_2^1$ \eqref{eq:bd_t21} with $\bm{v}$ replaced by $\bm{v} - \Pi_h\bm{v}$. With the stability of the interpolant, we have 
\begin{align*}
|\tilde{T}_1| &\leq C\|\bm{v} - \Pi_h \bm{v}\|^2_{L^{\infty}(\Omega)}\|\bm{\theta}_h\|^2_{\DG}\leq C\vertiii{\bm{v}}^2\|\bm{\theta}_h\|^2_{\DG},
\\ 
|\tilde{T}^1_2|& \leq C \left(  \| \nabla_h (\bm{v} - \Pi_h \bm{v}) \|_{L^3(\Omega)}^2 
+\| \bm{v}-\Pi_h \bm{v}\|_{L^{\infty}(\Omega)}^2 \right)\|\bm{\theta}_h \|_{\DG}^2 \leq  
  C \vertiii{\bm{v}}^2\|\bm{\theta}_h \|_{\DG}^2.   
\end{align*}
The only term that differs is $\tilde{T}_2^2$. We have, 
\begin{align}
    \tilde{T}_2^2 &= 4 \sum_{e\in\Gamma_h} \frac{\tilde{\sigma}}{h_e} \| [(\bm{v} - \Pi_h \bm{v}) \cdot \bm{\theta}_h ] \|^2_{L^2(e)}.  
\end{align}
Consider an edge $e$ and an adjacent element $E_e^1$. Using trace and H\"{o}lder's inequalities, we have 
\begin{align*}
    \| (&\bm{v} - \Pi_h \bm{v}) \cdot \bm{\theta}_h \vert_{E_e^1} \|_{L^2(e)}  \leq C h_{E_e^1}^{-1/2} \|\bm{v} - \Pi_h \bm{v}\|_{L^3(E_e^1)} \| \bm{\theta}_h \|_{L^{6}(E_e^1)}\\ &   + Ch_{E_e^1}^{1/2} \| \nabla(\bm{v} - \Pi_h \bm{v})\|_{L^{3}(E_e^1)} \| \bm{\theta}_h \|_{L^{6}(E_e^1)}  + Ch_{E_e^1}^{1/2} \| \bm{v} - \Pi_h \bm{v} \|_{L^\infty(E_e^1)} \|\nabla \bm{\theta}_h \|_{L^2(E_e^1)}, \nonumber  
\end{align*}
With the approximation property \eqref{eq:approximation_prop_local_1}, we have: 
\begin{equation}
\|\bm{v} - \Pi_h \bm{v}\|_{L^3(E_e^1)} \leq C h_{E_e^1} \vert \bm{v}\vert_{W^{1,3}(\Delta_{E_e^1})}. 
\end{equation}
Hence, with discrete H\"{o}lder's inequality, \eqref{eq:discreter_poincare}, \eqref{eq:approximation_prop_local_2}, and  \eqref{eq:stability_linf_pih},  $\tilde{T}_2^2$ is bounded as follows. 
\begin{equation}
|\tilde{T}_2^2| \leq C (|\bm{v}|^2_{W^{1,3}(\Omega)} + \| \bm{v}\|^2_{L^{\infty}(\Omega)})\|\bm{\theta}_h \|^2_{\DG}.
\end{equation}
Then, we have 
\begin{equation}\label{eq:third_nonlinear_estimate_1}
    \frac{1}{2} | b(\bm{w}_h, \tilde{\bm{c}}_1 \cdot \bm{c}_2)| \leq C \tau \left( \sum_{e \in \Gamma_h} \frac{\tilde{\sigma}}{h_e} \| [ \phi_h] \|_{L^2(e)}^2 \right)^{1/2} \vertiii{\bm
    {v}}\|\bm{\theta}_h\|_{\DG}. 
\end{equation}
Combining bounds \eqref{eq:third_nonlinear_term_0}, \eqref{eq:second_nonlinear_estimate_b}, \eqref{eq:third_nonlinear_estimate_1}, we obtain the bound \eqref{eq:second_estimate_nonlinear_term_C1}. 
\end{proof}
In addition, we have the following bounds on $a_\mathcal{C}$ and $\mathcal{C}$ presented in the next lemma. 
\begin{lemma}\label{lemma:bounds_nonlinear_terms_2}
  There exists a constant $C$, independent of $h, \tau, \bm{w}, \bm{z}, \bm{v}, \bm{w}_h,$ and $\bm{\theta}_h$ such that the following bounds hold. 
 \begin{enumerate}[(i)]
      \item If $\bm{w} \in (H^{k+1}(\Omega)\cap H^1_0(\Omega))^d$  satisfies $b(\bm{w}, q_h) = 0,\,\, \forall q_h \in M_h$ and if $\bm{v} \in (W^{1,3}(\Omega)\cap H^1_0(\Omega)\cap L^{\infty}(\Omega))^d$, then for any $\bm{z} \in \bm{X}$ and $ \bm{\theta}_h \in \bm{X}_h$:
     \begin{equation}
      |a_\mathcal{C}(\bm{z}; \Pi_h\bm{w}- \bm{w}, \bm{v}, \bm{\theta}_h )| \leq C  h^{k+1} \vert \bm{w} \vert_{H^{k+1}(\Omega)} \vertiii{\bm{v}} \|\bm{\theta}_h\|_{\DG}. \label{eq:third_estimate_nonlinear_term}
     \end{equation}
     \item If $\bm{w} \in (H^{k+1}(\Omega) \cap  H^1_0(\Omega))^d$ and $\bm{v} \in (W^{1,3}(\Omega) \cap H^1_0(\Omega) \cap L^{\infty}(\Omega))^d$, then for any $\bm{z} \in \bm{X}$ and $\bm{\theta}_h \in \bm{X}_h$:
     \begin{align}
     |\mathcal{C}(\Pi_h \bm{v},\bm{w} - \Pi_h \bm{w},  \bm{\theta}_h) | &\leq C h^{k}|\bm{w}|_{H^{k+1}(\Omega)}\vertiii{\bm{v}}\| \bm{\theta}_h\|_{\DG}, \label{eq:second_estimate_nonlinear_term_C2}  \\ 
     |\mathcal{U}(\bm{z};\Pi_h \bm{v},\bm{w} - \Pi_h \bm{w},  \bm{\theta}_h) | &\leq C h^{k}|\bm{w}|_{H^{k+1}(\Omega)} \vertiii{\bm{v}}\| \bm{\theta}_h\|.\label{eq:second_estimate_nonlinear_term_U2}
     \end{align} 
     \item If $\bm{v} \in H^{k+1}(\Omega)^d$, then for any $\bm{z} \in \bm{X}$ and $\bm{w}_h, \bm{\theta}_h \in \bm{X}_h$:  
     \begin{align}
      |\mathcal{U}(\bm{z}; \bm{w}_h, \bm{v}- \Pi_h \bm{v}, \bm{\theta}_h)|   \leq C h^{k-d/4}\| \bm{w}_h \||\bm{v}|_{H^{k+1}(\Omega)} \|\bm{\theta}_h \|_{\DG}. \label{eq:second_estimate_nonlinear_term} 
     \end{align}
 \end{enumerate}
\end{lemma}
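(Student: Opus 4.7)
For part (i), the key observation is that $\bm v \in H^1_0(\Omega)^d$ is continuous across faces and vanishes on $\partial\Omega$, so the upwind terms in $a_{\mathcal{C}}$ drop and, exactly as in \eqref{eq:ident_cont_u}, the form splits into the volume convective piece $\sum_{E\in\mesh_h}\int_E ((\Pi_h\bm w-\bm w)\cdot\nabla\bm v)\cdot\bm\theta_h$ and $\tfrac{1}{2} b(\Pi_h\bm w-\bm w,\bm v\cdot\bm\theta_h)$. The volume piece is bounded by H\"older's inequality together with \eqref{eq:approximation_prop_1} applied to $\Pi_h\bm w-\bm w$ and the embedding \eqref{eq:discreter_poincare}. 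For the $b$-term I follow the splitting \eqref{eq:splitting_b_cont_u} with $\bm c_1,\bm c_2$ denoting the elementwise averages of $\bm v$ and $\bm\theta_h$. The new ingredient is that $\bm c_1\cdot\bm c_2$ is piecewise constant, hence belongs to $M_h$, so the hypothesis $b(\bm w,q_h)=0$ combined with \eqref{eq:def_pi_h} forces $b(\Pi_h\bm w-\bm w,\bm c_1\cdot\bm c_2)=0$. Only $b(\Pi_h\bm w-\bm w,\bm v\cdot\bm\theta_h-\bm c_1\cdot\bm c_2)$ remains, and it is handled exactly as in \eqref{eq:first_nonlinear_estimate_1} with $\bm w_h$ replaced by $\Pi_h\bm w-\bm w$, yielding the desired $Ch^{k+1}|\bm w|_{H^{k+1}(\Omega)}\vertiii{\bm v}\|\bm\theta_h\|_{\DG}$.

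For part (ii), I expand $\mathcal C(\Pi_h\bm v,\bm w-\Pi_h\bm w,\bm\theta_h)$ into its gradient, divergence, and face-jump pieces. The gradient piece is bounded via $\|\Pi_h\bm v\|_{L^\infty}\le C\vertiii{\bm v}$ from \eqref{eq:stability_linf_pih} together with \eqref{eq:approximation_prop_2}. The divergence piece combines the inverse estimate $\|\nabla\Pi_h\bm v\|_{L^\infty(E)}\le Ch_E^{-1}\|\Pi_h\bm v\|_{L^\infty(E)}$ with the local approximation $\|\bm w-\Pi_h\bm w\|_{L^2(E)}\le Ch_E^{k+1}|\bm w|_{H^{k+1}(\Delta_E)}$ from \eqref{eq:approximation_prop_local_1}. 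The face-jump piece uses the pointwise bound on $[\Pi_h\bm v]$ (controlled by $\vertiii{\bm v}$), the trace-plus-approximation estimate $\|\bm w-\Pi_h\bm w\|_{L^2(e)}\le Ch_E^{k+1/2}|\bm w|_{H^{k+1}(\Delta_E)}$, and a discrete trace on $\bm\theta_h$. For the upwind form $\mathcal U(\bm z;\Pi_h\bm v,\bm w-\Pi_h\bm w,\bm\theta_h)$, the $L^\infty$ bound on $\Pi_h\bm v$ factors out of the sum, and what remains is $\sum_e\|[\bm w-\Pi_h\bm w]\|_{L^2(e)}\|\bm\theta_h\|_{L^2(e)}$, which by Cauchy-Schwarz, trace, and approximation produces the claimed $h^k$ with only the $L^2$ norm of $\bm\theta_h$ on the right.

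For part (iii), the difficulty is that $\bm w_h$ carries no pointwise control---only $\|\bm w_h\|$ is available on the right-hand side. On each face $e$ adjacent to an element $E$, I apply H\"older with exponents $(2,4,4)$:
\[
\int_e |\{\bm w_h\}\cdot\bm n_E|\,|[\bm v-\Pi_h\bm v]|\,|\bm\theta_h^{\rm int}|
\le \|\bm w_h\|_{L^2(e)}\,\|[\bm v-\Pi_h\bm v]\|_{L^4(e)}\,\|\bm\theta_h\|_{L^4(e)}.
\]
Discrete trace inequalities give $\|\bm w_h\|_{L^2(e)}\le Ch_E^{-1/2}\|\bm w_h\|_{L^2(E)}$ and $\|\bm\theta_h\|_{L^4(e)}\le Ch_E^{-1/4}\|\bm\theta_h\|_{L^4(E)}$, while combining the Sobolev embedding $H^1\hookrightarrow L^4$ on the reference element with the $L^4$ trace inequality and the local approximation bounds \eqref{eq:approximation_prop_local_1}--\eqref{eq:approximation_prop_local_2} yields $\|\bm v-\Pi_h\bm v\|_{L^4(e)}\le Ch_E^{k+3/4-d/4}|\bm v|_{H^{k+1}(\Delta_E)}$. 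Multiplying produces an elementwise factor $h_E^{k-d/4}$; summing and then applying a triple discrete H\"older with exponents $(2,4,4)$ together with the embedding $\|\bm\theta_h\|_{L^4(\Omega)}\le C\|\bm\theta_h\|_{\DG}$ from \eqref{eq:discreter_poincare} delivers the required $Ch^{k-d/4}\|\bm w_h\|\,|\bm v|_{H^{k+1}(\Omega)}\,\|\bm\theta_h\|_{\DG}$. The main obstacle is this middle estimate on $\|\bm v-\Pi_h\bm v\|_{L^4(e)}$: the Sobolev embedding needed to promote $H^{k+1}$ regularity to $L^4$ costs exactly $h^{-d/4}$ in scaling, which is precisely what forces the weakened exponent $k-d/4$ (rather than $k$) in the final bound.
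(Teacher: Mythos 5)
Your proposal reproduces the paper's overall strategy and reaches the correct bounds: the splitting of $a_\mathcal{C}$ into a volume convective piece plus $\tfrac12 b(\cdot,\bm v\cdot\bm\theta_h)$, the cancellation $b(\Pi_h\bm w-\bm w,\bm c_1\cdot\bm c_2)=0$ using that the piecewise constant $\bm c_1\cdot\bm c_2$ lies in $M_h$, and the $L^\infty$-stability \eqref{eq:stability_linf_pih} of $\Pi_h$ are exactly the paper's ingredients. Two points deserve comment. First, in (i) the claim that the remaining term is ``handled exactly as in \eqref{eq:first_nonlinear_estimate_1} with $\bm w_h$ replaced by $\Pi_h\bm w-\bm w$'' is too quick: the derivation of \eqref{eq:first_nonlinear_estimate_1} uses the inverse inequality $\|\nabla\cdot\bm w_h\|_{L^2(E)}\le Ch_E^{-1}\|\bm w_h\|_{L^2(E)}$ and a discrete trace inequality, neither of which applies to the non-polynomial function $\Pi_h\bm w-\bm w$. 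The conclusion survives, but one must instead retain $\|\nabla(\Pi_h\bm w-\bm w)\|_{L^2(E)}\le Ch_E^{k}|\bm w|_{H^{k+1}(\Delta_E)}$ and use the continuous trace inequality on faces, pairing these with the $O(h_E)$ oscillation bound on $\bm v\cdot\bm\theta_h-\bm c_1\cdot\bm c_2$ to recover the full $h^{k+1}$; this is precisely the adaptation the paper carries out. Second, in (iii) you allocate the H\"older exponents as $(2,4,4)$, so the unavoidable $h^{-d/4}$ loss sits in the estimate $\|\bm v-\Pi_h\bm v\|_{L^4(e)}\le Ch_E^{k+3/4-d/4}|\bm v|_{H^{k+1}(\Delta_E)}$ obtained from the Sobolev embedding; the paper uses $(4,2,4)$ and pays the same $|E|^{-1/4}\sim h^{-d/4}$ through the inverse estimate \eqref{eq:inverse_estimate_l4} on $\bm w_h$, keeping the interpolation error in $L^2$ where the standard bounds apply verbatim. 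The two allocations are equivalent and both yield $h^{k-d/4}$; yours makes more transparent where the exponent loss originates. Likewise, in (ii) you expand $\mathcal C$ directly into volume, divergence and jump pieces (using an inverse estimate on $\nabla\Pi_h\bm v$), whereas the paper rewrites the $b$-term through the integrated-by-parts form \eqref{eq:equiv_form_b}; both routes give $h^k$ and are correct.
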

\begin{proof}
(i) Note that since $\bm{v}$ has zero jumps and vanishes on the boundary, the upwind term vanishes. We have  
\begin{equation}
    a_\mathcal{C}(\bm{z}; \Pi_h \bm{w} - \bm{w}, \bm{v}, \bm{\theta}_h) =  \sum_{E \in \mesh_h} \int_{E} ((\Pi_h \bm{w} - \bm{w} )\cdot \nabla \bm{v} ) \cdot \bm{\theta}_h  + \frac{1}{2} b(\Pi_h \bm{w} - \bm{w} , \bm{v} \cdot \bm{\theta}_h). \nonumber
\end{equation}
The first term is bounded similarly as before (with \eqref{eq:discreter_poincare} and \eqref{eq:approximation_prop_1}:  
\begin{align}
\left| \sum_{E \in \mesh_h} \int_{E} ((\Pi_h \bm{w} - \bm{w} )\cdot \nabla \bm{v} ) \cdot \bm{\theta}_h \right| &\leq  \|\Pi_h \bm{w} - \bm{w}\| \vert \bm{v}\vert_{W^{1,3}(\Omega)} \| \bm{\theta}_h \|_{L^6(\Omega)} \nonumber  \\ 
& \leq  Ch^{k+1}|\bm{w} |_{H^{k+1}(\Omega)} \vert \bm{v} \vert_{W^{1,3}(\Omega)} \| \bm{\theta}_h \|_{\DG} \label{eq:nonlinear_term_3_0}.
\end{align}
Recall the definition of $\bm{c}_1, \bm{c}_2$ in \eqref{eq:def_c1_c2}. Since $b(\Pi_h \bm{w} - \bm{w} , q_h ) = 0$ for any $q_h \in M_h$, we have 
\begin{equation}
    b(\Pi_h \bm{w} - \bm{w} , \bm{v} \cdot \bm{\theta}_h) =  b(\Pi_h \bm{w} - \bm{w}, \bm{v} \cdot \bm{\theta}_h  - \bm{c}_1 \cdot \bm{c}_2), \label{eq:adding_zero}
\end{equation}
Using H\"{o}lder's inequality and \eqref{eq:prop_zero_avg}, we have the following bounds. For $E \in \mesh_h$, 
\begin{align*}
\|\bm{v} \cdot \bm{\theta}_h - \bm{c}_1 \cdot \bm{c}_2 \|_{L^2(E)} &\leq \|(\bm{v} - \bm{c}_1) \cdot \bm{\theta}_h \|_{L^2(E)} + \|\bm{c}_1 \cdot (\bm{c}_2 - \bm{\theta}_h)\|_{L^2(E)} \\
& \leq \|\bm{v} - \bm{c}_1 \|_{L^3(E)} \| \bm{\theta}_h \|_{L^{6}(E)} + \|\bm{c}_1\|_{L^{\infty}(E)} \|\bm{c}_2 - \bm{\theta}_h \|_{L^2(E)} \\
& \leq Ch_E |\bm{v}|_{W^{1,3}(E)} \|\bm{\theta}_h \|_{L^{6}(E)}
+ Ch_E \|\bm{v} \|_{L^{\infty}(E)} \| \nabla \bm{\theta}_h\|_{L^2(E)}. \end{align*}
Hence, with the help of the discrete H\"{o}lder's inequality and \eqref{eq:discreter_poincare}, the volume term in the right-hand side of \eqref{eq:adding_zero} is bounded as follows. 
\begin{align}
 \Big| \sum_{E \in \mesh_h} \int_{E} & (\nabla \cdot (\Pi_h \bm{w}- \bm{w}))(\bm{v} \cdot \bm{\theta}_h - \bm{c}_1 \cdot \bm{c}_2) \Big|  \nonumber\\ 
&\leq C \sum_{E \in \mesh_h}\| \nabla (\Pi_h \bm{w} - \bm{w} )\|_{L^2(E)} \| \bm{v} \cdot \bm{\theta}_h - \bm{c}_1 \cdot \bm{c}_2\|_{L^2(E)}\nonumber\\ 
&\leq C h^{k+1} |\bm{w}|_{H^{k+1}(\Omega)}\vertiii{\bm{v}} \|\bm{\theta}_h \|_{\DG}.\label{eq:nonlinear_term_3_1}
\end{align} 
The face terms in the right-hand side of \eqref{eq:adding_zero} are handled in a slightly different way as in \eqref{eq:detailtrace}, with $\bm{w}_h$ replaced by $\Pi_h \bm{w} - \bm{w}$.  For instance, one of the terms is bounded as follows:
\begin{align*}
\Vert ((\bm{v}-\bm{c}_1) \cdot\bm{\theta}_h)|_{E_e^1}\Vert_{L^2(e)} 
\Vert (\Pi_h\bm{w}-\bm{w})|_{E_e^2}\Vert_{L^2(e)}
\leq C \vert e \vert \, \vert E_e^1\vert^{-\frac12} \vert E_e^2\vert^{-\frac12} h_{E_e^1} \\
\times (\Vert \nabla \bm{v}\Vert_{L^3(E_e^1)} \Vert \bm{\theta}_h\Vert_{L^6(E_e^1)} + \Vert \bm{v}\Vert_{L^\infty(E_e^1)} \Vert 
\nabla \bm{\theta}_h \Vert_{L^2(E_e^1)})
\\
\times (\Vert \Pi_h \bm{w}-\bm{w}\Vert_{L^2(E_e^2)} + h_{E_e^2} \Vert \nabla (\Pi_h \bm{w}-\bm{w})\Vert_{L^2(E_e^2)}).
\end{align*}
With \eqref{eq:approximation_prop_local_1}, \eqref{eq:approximation_prop_local_2}, we have
\begin{align*}
\Vert ((\bm{v}-\bm{c}_1) \cdot\bm{\theta}_h)|_{E_e^1}\Vert_{L^2(e)}
\Vert (\Pi_h\bm{w}-\bm{w})|_{E_e^2}\Vert_{L^2(e)}
\leq C h^{k+1} \Vert \bm{w}\Vert_{H^{k+1}(\Delta_{E_e^2})}\\
\times
(\Vert \nabla \bm{v}\Vert_{L^3(E_e^1)} \Vert \bm{\theta}_h\Vert_{L^6(E_e^1)} + \Vert \bm{v}\Vert_{L^\infty(E_e^1)} \Vert
\nabla \bm{\theta}_h \Vert_{L^2(E_e^1)}). 
\end{align*}
Hence, with discrete H\"{o}lder inequality and \eqref{eq:discreter_poincare}, the face  terms are bounded as follows. 
\begin{multline}
\frac{1}{2} \left| \sum_{e\in \Gamma_h \cup \partial \Omega} \int_e [\Pi_h \bm{w} - \bm{w}]\cdot \bm{n}_e \{ \bm{v} \cdot \bm{\theta}_h - \bm{c}_1 \cdot \bm{c}_2\} \right| 
 \\  \leq C h^{k+1}|\bm{w}|_{H^{k+1}(\Omega)}\vertiii{\bm{v}}\|\bm{\theta}_h \|_{\DG}.\label{eq:nonlinear_term_3_2}
\end{multline}
With bounds \eqref{eq:nonlinear_term_3_1} and \eqref{eq:nonlinear_term_3_2}, we obtain: 
\begin{equation}
\frac{1}{2}|b(\Pi_h \bm{w} - \bm{w}, \bm{v} \cdot \bm{\theta}_h)| \leq C h^{k+1}|\bm{w}|_{H^{k+1}(\Omega)}\vertiii{\bm{v}}\|\bm{\theta}_h \|_{\DG}. \label{eq:bound_b_pi_h}
\end{equation}
Combining bounds \eqref{eq:nonlinear_term_3_0} and \eqref{eq:bound_b_pi_h} yields the result.

(ii) To show \eqref{eq:second_estimate_nonlinear_term_C2}, we write 
\begin{multline}
\mathcal{C}(\Pi_h \bm{v}, \bm{w}- \Pi_h \bm{w}, \bm{\theta}_h) =  \sum_{E \in \mesh_h} \int_{E} (\Pi_h \bm{v} \cdot \nabla (\bm{w} - \Pi_h\bm{w} ) )\cdot \bm{\theta}_h  \\+ \frac{1}{2} b(\Pi_h \bm{v}, (\bm{w} -\Pi_h \bm{w}) \cdot \bm{\theta}_h). 
\end{multline}
With H\"{o}lder's inequality, stability of the interpolant and \eqref{eq:approximation_prop_1}, the first term is bounded by: 
\begin{equation}
    \| \Pi_h \bm{v}\|_{L^{\infty}(\Omega)}\|\nabla_h (\bm{w} - \Pi_h \bm{w})\| \|\bm{\theta}_h \| \leq C \vertiii{\bm{v}} h^{k} \vert \bm{w}\vert_{H^{k+1}(\Omega)} \|\bm{\theta}_h \|_{\DG}.\label{eq:bd_first_term_c2}
\end{equation}
To handle the second term, we will use \eqref{eq:equiv_form_b}. We have: 
\begin{multline}
 b(\Pi_h \bm{v}, (\bm{w} -\Pi_h \bm{w}) \cdot \bm{\theta}_h) = -\sum_{E \in \mesh_h} \int_E  \Pi_h \bm{v} \cdot \nabla((\bm{w} - \Pi_h \bm{w})\cdot \bm{\theta}_h ) \\ + \sum_{e \in \Gamma_h } \int_e \{ \Pi_h \bm{v} \} \cdot \bm{n}_e [(\bm{w} - \Pi_h \bm{w})\cdot \bm{\theta}_h ] = B_1 + B_2.\end{multline}
We bound $B_1$ with H\"{o}lder's inequality, stability of the interpolant \eqref{eq:stability_linf_pih}, and the approximation properties \eqref{eq:approximation_prop_1}-\eqref{eq:approximation_prop_2}. We have 
\begin{align} 
|B_1| & \leq \|\Pi_h \bm{v}\|_{L^{\infty}(\Omega)} (\| \nabla_h (\bm{w} - \Pi_h \bm{w})\| \| \bm{\theta}_h \| + \|\bm{w} - \Pi_h \bm{w} \|\| \nabla_h \bm{\theta}_h \| ) \nonumber, \\  
&  \leq Ch^{k}  \vertiii{\bm{v}}\vert \bm{w} \vert_{H^{k+1}(\Omega)} \|\bm{\theta}_h \|_{\DG}.\label{eq:bound_b1_c2} 
\end{align}
For $B_2$, 
let $e$ be an interior face in $\Gamma_h$ shared by $E_e^1$ and $E_e^2$. We have, 
\[
\Vert [(\bm{w} - \Pi_h \bm{w})\cdot \bm{\theta}_h ]\Vert_{L^1(e)}
\leq \Vert (\bm{w} - \Pi_h \bm{w})|_{E_e^1}\cdot \bm{\theta}_h|_{E_e^1}\Vert_{L^1(e)} +
\Vert (\bm{w} - \Pi_h \bm{w})|_{E_e^2}\cdot \bm{\theta}_h|_{E_e^2}\Vert_{L^1(e)}. 
\]
With H\"{o}lder's inequality and a trace estimate, we have for $i=1,2$:
\begin{align*}
\Vert& (\bm{w} - \Pi_h \bm{w})|_{E_e^i}\cdot \bm{\theta}_h|_{E_e^i}\Vert_{L^1(e)}  
\leq C h_{E_e^i}^{-1/2} \Vert  (\bm{w} - \Pi_h \bm{w})|_{E_e^i}\Vert_{L^2(e)} \, \Vert \bm{\theta}_h\Vert_{L^2(E_e^i)}\\
&\leq C (h_{E_e^i}^{-1} \Vert \bm{w} - \Pi_h \bm{w}\Vert_{L^2(E_e^i)}
+ \Vert \nabla (\bm{w} - \Pi_h \bm{w})\Vert_{L^2(E_e^i)})\|\bm{\theta}_h\|_{L^2(E_e^i)}. 
\end{align*}
We apply the approximation properties, sum over the faces, use \eqref{eq:discreter_poincare}, and obtain
\begin{equation}
|B_2|\leq Ch^{k} \| \Pi_h \bm{v} \|_{L^{\infty}(\Omega)} \vert \bm{w}\vert_{H^{k+1}(\Omega)} \|\bm{\theta}_h \|_{\DG} \leq C h^k \vertiii{\bm{v}}\vert \bm{w}\vert_{H^{k+1}(\Omega)} \|\bm{\theta}_h \|_{\DG} . \label{eq:bound_b2_c2}
\end{equation}
Bounds \eqref{eq:bd_first_term_c2}, \eqref{eq:bound_b1_c2}, and \eqref{eq:bound_b2_c2} yield \eqref{eq:second_estimate_nonlinear_term_C2}. To handle the upwind term, we consider the faces' contributions to the upwind terms. For details on such contributions, see Propositon 4.10 in \cite{girault2009dg}. We bound it by:
\begin{multline*}
C\|\Pi_h \bm{v}\|_{L^{\infty}(\Omega)} \sum_{i,j= 1}^{2} \sum_{e \in \Gamma_h} \|\bm{w} - \Pi_h \bm{w}|_{E_e^i}\|_{L^2(e)} \|\bm{\theta}_h |_{E_e^j}\|_{L^2(e)} \\ + C\|\Pi_h \bm{v}\|_{L^{\infty}(\Omega)}\sum_{ e\in\partial\Omega } \|\bm{w} - \Pi_h \bm{w}|_{E_e}\|_{L^2(e)} \|\bm{\theta}_h |_{E_e}\|_{L^2(e)}. 
\end{multline*} 
Consider the case $e \in \Gamma_h$. With a trace inequality and approximation properties \eqref{eq:approximation_prop_local_1}-\eqref{eq:approximation_prop_local_2}, we have
\begin{align*}
    \|\bm{w} - \Pi_h \bm{w}|_{E_e^i}\|_{L^2(e)} \|\bm{\theta}_h |_{E_e^j}\| \leq C |e||E_e^i|^{-1/2}|E_e^j|^{-1/2} h_{E_e^{i}}^{k+1} |\bm{w}|_{H^{k+1}(\Delta_{E_e^i})} \|\bm{\theta}_h\|_{L^2(E_e^j)} \\ 
    \leq Ch_{E_e^i}^{k}  |\bm{w}|_{H^{k+1}(\Delta_{E_e^i})} \|\bm{\theta}_h\|_{L^2(E_e^j)}.
\end{align*}
With similar bounds for the case $e \in \partial \Omega$, discrete H\"{o}lder's inequality, and \eqref{eq:stability_linf_pih},  we conclude that \eqref{eq:second_estimate_nonlinear_term_U2} holds. 

(iii) This bound can be found in Proposition 4.1 in \cite{girault2005splitting} for $k=1$ and $d=2$. Here, we extend it to any polynomial degree $k$ and for $d = 2,3$. We will use local trace and inverse estimates. For $i = 1,2$,  we have: 
\begin{alignat}{2}
    \| \bm{\theta}_h \vert_{E_e^i}\|_{L^4(e)} &\leq C |e|^{1/4}|E_e^i|^{-1/4} \|\bm{\theta}_h\|_{L^4(E_e^i)}, && \quad \forall \bm{\theta}_h \in \bm{X}_h ,  \label{eq:trace_estimate_l4} \\ 
    \| \bm{\theta}_h \|_{L^4(E_e^i)} &\leq C |E_e^i|^{-1/4} \|\bm{\theta}_h\|_{L^2(E_e^i)}, && \quad \forall \bm{\theta}_h \in \bm{X}_h  \label{eq:inverse_estimate_l4}.
\end{alignat}
We also consider the faces' contributions to handle the upwind term. With H\"{o}lder's inequality, we bound the upwind term, $ \mathcal{U}(\bm{z}; \bm{w}_h, \bm{v}- \Pi_h \bm{v}, \bm{\theta}_h)$  by: 
\begin{multline*} C \sum_{i,j,k = 1}^{2} \sum_{e \in \Gamma_h } \|\bm{w}_h|_{E_e^{i}}\|_{L^4(e)} \|(\bm{v} - 
    \Pi_h \bm{v})|_{E_e^{k}}\|_{L^2(e)}\|\bm{\theta}_h|_{E_e^{j}}\|_{L^4(e)} \\ + C \sum_{e \in \partial \Omega} \|\bm{w}_h|_{E_e}\|_{L^4(e)} \|(\bm{v} - 
    \Pi_h \bm{v})|_{E_e}\|_{L^2(e)}\|\bm{\theta}_h|_{E_e}\|_{L^4(e)} . \end{multline*}
Consider the case when $e \in \Gamma_h$ and $j = k $. The other cases are handled similarly. 
We use  \eqref{eq:trace_estimate_l4}, \eqref{eq:inverse_estimate_l4}, and \eqref{eq:approximation_prop_local_1}-\eqref{eq:approximation_prop_local_2}: 
\begin{align}  &\|\bm{w}_h|_{E_e^{i}}\|_{L^4(e)} \|(\bm{v} - \Pi_h \bm{v})|_{E_e^{j}}\|_{L^2(e)}\|\bm{\theta}_h|_{E_e^{j}}\|_{L^4(e)}  \nonumber \\
&\leq C|e| |E_e^i|^{-1/2}|E_e^j|^{-3/4} h_{E_e^j}^{k+1}\|\bm{w}_h\|_{L^2(E_e^i)} |\bm{v}|_{H^{k+1}(\Delta_{E_e^j})} \|\bm{\theta}_h\|_{L^4(E_e^j)} \nonumber . 
\end{align}
Since $|e| |E_e^i|^{-1/2} |E_e^j|^{-1/2} h_{E_e^j } \leq C$ and $|E_e^j | \geq C (1/\rho)^d h_{E_e^j}^{d}$, we obtain 
\begin{align} &\|\bm{w}_h|_{E_e^{i}}\|_{L^4(e)} \|(\bm{v} - \Pi_h \bm{v})|_{E_e^{j}}\|_{L^2(e)}\|\bm{\theta}_h|_{E_e^{j}}\|_{L^4(e)}  \nonumber \\
    &\leq Ch_{E_e^j}^{k-d/4}\|\bm{w}_h\|_{L^2(E_e^i)} |\bm{v}|_{H^{k+1}(\Delta_{E_e^j})} \|\bm{\theta}_h\|_{L^4(E_e^j)}. \label{eq:third_nonlinear_estimate_2}  
    \end{align}
Note that 
we have 
\begin{equation}
    \left(\sum_{e \in \Gamma_h} |\bm{v}|_{H^{k+1}(\Delta_{E_e^j)}}^4\right)^{1/4} \leq \left(\sum_{e \in \Gamma_h } |\bm{v}|_{H^{k+1}(\Delta_{E_e^j})}^2\right)^{1/2} \leq C |\bm{v} |_{H^{k+1}(\Omega)}.\nonumber  
\end{equation}
This bound along with similar bounds to \eqref{eq:third_nonlinear_estimate_2}, discrete H\"{o}lder's inequality, and \eqref{eq:discreter_poincare} show estimate \eqref{eq:second_estimate_nonlinear_term}. 
\end{proof}
With the bounds established in Lemma \ref{prep:bounds_nonlinear_terms} and Lemma \ref{lemma:bounds_nonlinear_terms_2}, we show a bound on $R_{\mathcal{C}} (\err)$. 

\begin{lemma} \label{lemma:bounds_Rnl}
Assume that $ \bm{u} \in L^{\infty}(0,T; H^{k+1}(\Omega)^d)$ and $\partial_t \bm{u} \in L^2(0,T; L^2(\Omega)^d)$. There exist positive constants $\gamma$ and $C$  independent of $h$,  $\tau$ and $\mu$   such that if  $\tau \leq \gamma  \mu $, the following bound holds. For $n \geq 1$,  
\begin{multline}
| R_{\mathcal{C}}(\err)| \leq   \frac{C}{\mu} \tau \int_{t^{n-1}}^{t^n} \|\partial_t \bm{u} \|^2 
+ \frac{C}{\mu}   h^{2k} + 
\frac{C}{\mu}   \|\bm{e}_h^{n-1} \|^2 +  \frac{\kappa \mu}{16}  \|\err \|^2_{\DG}   \\  
+ \frac{1}{16} \tau \sum_{e\in\Gamma_h} \frac{\tilde{\sigma}}{h_e} \|[\phi_h^n]\|^2_{L^2(e)} + \frac{1}{4}\tau \sum_{e\in\Gamma_h} \frac{\tilde{\sigma}}{h_e} \|[\phi_h^n- \phi_h^{n-1}]\|^2_{L^2(e)}   
+ \frac{\delta_{n,1}}{2} |b(\bm{e}_h^{0}, \Pi_h \bm{u}^1 \cdot \tilde{\bm{e}}_h^1)|.    \label{eq:bound_nonlinear_term}
\end{multline}
\end{lemma}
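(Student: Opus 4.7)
The strategy is to exploit the tri-linearity of $a_\mathcal{C}$ and decompose $R_\mathcal{C}(\err)$ by adding and subtracting carefully chosen terms, so that each surviving piece either (a) involves the interpolation error $\Pi_h\bm{u}^n - \bm{u}^n$, (b) involves the error $\bm{e}_h^{n-1}$ (for which Lemma~\ref{prep:weak_div_error} supplies the weak divergence property required by Lemma~\ref{prep:bounds_nonlinear_terms}), or (c) involves the time-increment $\bm{u}^{n-1}-\bm{u}^n$. First I would write
\begin{equation*}
R_\mathcal{C}(\err) = \underbrace{a_\mathcal{C}(\bm{u}_h^{n-1};\bm{u}_h^{n-1},\Pi_h\bm{u}^n-\bm{u}^n,\err)}_{A} + \underbrace{a_\mathcal{C}(\bm{u}_h^{n-1};\bm{u}_h^{n-1}-\bm{u}^n,\bm{u}^n,\err)}_{B},
\end{equation*}
using that the residual difference $a_\mathcal{C}(\bm{u}_h^{n-1};\bm{u}^n,\bm{u}^n,\err) - a_\mathcal{C}(\bm{u}^n;\bm{u}^n,\bm{u}^n,\err) = -\mathcal{U}(\bm{u}_h^{n-1};\bm{u}^n,\bm{u}^n,\err) + \mathcal{U}(\bm{u}^n;\bm{u}^n,\bm{u}^n,\err)$ vanishes since $\bm{u}^n$ is continuous and vanishes on $\partial\Omega$.

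To bound $A$, I would apply the splitting $a_\mathcal{C} = \mathcal{C}-\mathcal{U}$ of \eqref{eq:split_ac2} and then decompose the leading argument as $\bm{u}_h^{n-1} = \bm{e}_h^{n-1}+\Pi_h\bm{u}^{n-1}$. The two pieces with leading argument $\Pi_h\bm{u}^{n-1}$ are bounded by \eqref{eq:second_estimate_nonlinear_term_C2} and \eqref{eq:second_estimate_nonlinear_term_U2} of Lemma~\ref{lemma:bounds_nonlinear_terms_2}, giving an $O(h^k)$ estimate. The upwind piece $\mathcal{U}(\bm{u}_h^{n-1};\bm{e}_h^{n-1},\Pi_h\bm{u}^n-\bm{u}^n,\err)$ is bounded by \eqref{eq:second_estimate_nonlinear_term} (part (iii)), and the piece $\mathcal{C}(\bm{e}_h^{n-1},\Pi_h\bm{u}^n-\bm{u}^n,\err)$ is bounded by Lemma~\ref{prep:bounds_nonlinear_terms}(ii), using identity \eqref{eq:div_error_2} to verify that $\bm{e}_h^{n-1}$ satisfies hypothesis \eqref{eq:assum_on_w} with $\phi_h=\phi_h^{n-1}$ (this step requires $n\geq 2$). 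For $B$, I decompose $\bm{u}_h^{n-1}-\bm{u}^n = \bm{e}_h^{n-1} + (\Pi_h\bm{u}^{n-1}-\bm{u}^{n-1}) + (\bm{u}^{n-1}-\bm{u}^n)$: the $\bm{e}_h^{n-1}$ piece is handled by Lemma~\ref{prep:bounds_nonlinear_terms}(i) (for $n\geq 2$), the interpolation piece by Lemma~\ref{lemma:bounds_nonlinear_terms_2}(i), and the time-increment piece reduces, using continuity of $\bm{u}^{n-1}-\bm{u}^n$, the homogeneous Dirichlet data, and $\nabla\cdot(\bm{u}^{n-1}-\bm{u}^n)=0$, to the volume integral $\int_\Omega((\bm{u}^{n-1}-\bm{u}^n)\cdot\nabla\bm{u}^n)\cdot\err$, estimated by H\"older, \eqref{eq:discreter_poincare}, and $\bm{u}^{n-1}-\bm{u}^n = -\int_{t^{n-1}}^{t^n}\partial_t\bm{u}$.

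Finally I would combine the estimates with Young's inequality. The $\|\bm{e}_h^{n-1}\|$-factors produce the $\tfrac{C}{\mu}\|\bm{e}_h^{n-1}\|^2$ term; the factors $h^{k+1}$, $h^k$, and $h^{k-d/4}$ (combined with $\tau\leq\gamma\mu$ where needed to absorb an extra $h^{-d/4}$) combine into the $\tfrac{C}{\mu}h^{2k}$ term. The jump contributions $\tau\bigl(\sum_e \tilde\sigma h_e^{-1}\|[\phi_h^{n-1}]\|_{L^2(e)}^2\bigr)^{1/2}\|\err\|_{\DG}$ appearing from Lemma~\ref{prep:bounds_nonlinear_terms} must be split by Young's inequality as $ab \leq \tfrac{1}{2}\tau b^2 + \tfrac{1}{2}\tau^{-1}a^2$, yielding an absorbable $\tau\|\err\|_{\DG}^2$ piece (swallowed via $\tau\leq \gamma\mu$) and a $\tau\sum_e\tilde\sigma h_e^{-1}\|[\phi_h^{n-1}]\|_{L^2(e)}^2$ piece, which is then rewritten with $[\phi_h^{n-1}] = [\phi_h^n] - [\phi_h^n-\phi_h^{n-1}]$ and the triangle inequality to produce the $\tfrac{1}{16}$ and $\tfrac{1}{4}$ coefficients. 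The main obstacle is the case $n=1$: since $\bm{u}_h^0$ is the local $L^2$ projection, $\bm{e}_h^0$ does \emph{not} satisfy \eqref{eq:assum_on_w}, so Lemma~\ref{prep:bounds_nonlinear_terms} is inapplicable to $a_\mathcal{C}(\bm{u}_h^0;\bm{e}_h^0,\bm{u}^1,\err)$. I would treat this term directly with identity \eqref{eq:ident_cont_u}, bounding the volume contribution by H\"older and \eqref{eq:discreter_poincare} and leaving $\tfrac{1}{2}b(\bm{e}_h^0,\bm{u}^1\cdot\err)$ after replacing $\bm{u}^1$ by $\Pi_h\bm{u}^1$ (the resulting approximation error contributes a further $h^{2k}$ term). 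This accounts for the $\tfrac{\delta_{n,1}}{2}|b(\bm{e}_h^0,\Pi_h\bm{u}^1\cdot\err)|$ term in the stated bound.
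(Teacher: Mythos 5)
Your proposal follows essentially the same route as the paper's proof: the identical decomposition (your $A$ is the paper's $W_1^n$ and your $B$ splits into its $W_2^n+W_3^n$), the same applications of Lemmas \ref{prep:bounds_nonlinear_terms} and \ref{lemma:bounds_nonlinear_terms_2} to each piece, the same use of $\tau\le\gamma\mu$ to absorb the $\tau\|\err\|_{\DG}^2$ remainders, and the same direct treatment of the $n=1$ case where \eqref{eq:assum_on_w} fails for $\bm{e}_h^0$. The only cosmetic difference is at $n=1$, where the paper simply carries both leftover terms $\tfrac12 b(\bm{e}_h^0,(\Pi_h\bm{u}^1-\bm{u}^1)\cdot\tilde{\bm{e}}_h^1)$ and $\tfrac12 b(\bm{e}_h^0,\bm{u}^1\cdot\tilde{\bm{e}}_h^1)$ and sums them by linearity to get $\tfrac12 b(\bm{e}_h^0,\Pi_h\bm{u}^1\cdot\tilde{\bm{e}}_h^1)$ exactly, with no extra approximation-error term needed.
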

\begin{proof}
We follow a similar technique to \cite{girault2005splitting}. Since $\bm{u}^n$ belongs to $H^1_0(\Omega)^d$, we can write 
\[
a_\mathcal{C}(\bm{u}^{n}; \bm{u}^{n}, \bm{u}^n, \err)
=
a_\mathcal{C}(\bm{u}_h^{n-1}; \bm{u}^{n}, \bm{u}^n, \err).
\]
Therefore, we have
\begin{align*}
      R_{\mathcal{C}}(\err) 
     & = a_\mathcal{C}(\bm{u}_h^{n-1}; \bm{u}_h^{n-1}, \Pi_h \bm{u}^n - \bm{u}^n, \err) + a_\mathcal{C}(\bm{u}_h^{n-1}; \bm{e}_h^{n-1},  \bm{u}^n, \err) \nonumber \\ & \quad + a_\mathcal{C}(\bm{u}_h^{n-1};\Pi_h \bm{u}^{n-1} - \bm{u}^{n}, \bm{u}^n, \err)  = W^n_1 + W^n_2 + W^n_3. 
    \end{align*} 
With the help of \eqref{eq:split_ac2}, we write $W^n_1$ as follows. 
\begin{align*}
    W^n_1  
    & =  \mathcal{C}(\bm{e}_h^{n-1}, \Pi_h \bm{u}^n - \bm{u}^n, \err) + \mathcal{C}(\Pi_h \bm{u}^{n-1}, \Pi_h \bm{u}^n - \bm{u}^n, \err) \\ & \quad -  \mathcal{U}(\bm{u}_h^{n-1}; \bm{e}_h^{n-1}, \Pi_h \bm{u}^n - \bm{u}^n, \err) -  \mathcal{U}(\bm{u}_h^{n-1}; \Pi_h \bm{u}^{n-1}, \Pi_h \bm{u}^n - \bm{u}^n, \err).
\end{align*}
For $n=1$, with \eqref{eq:expanding_C} and \eqref{eq:third_nonlinear_term_0}, we have: 
\begin{equation*}
 \mathcal{C}(\bm{e}_h^{0}, \Pi_h \bm{u}^1 - \bm{u}^1, \tilde{\bm{e}}_h^1) \leq C\|\bm{e}_h^{0}\||\bm{u}^1|_{W^{1,3}(\Omega)} \|\tilde{\bm{e}}_h^1\|_{\DG} + \frac{1}{2}b(\bm{e}_h^0, (\Pi_h \bm{u}^1 - \bm{u}^1) \cdot \tilde{\bm{e}}_h^1). 
\end{equation*}
Since \eqref{eq:div_error_2} holds, we can apply \eqref{eq:second_estimate_nonlinear_term_C1} in Lemma \ref{prep:bounds_nonlinear_terms} to bound the first term in $W_1^n$ for $n \geq 2$. We also use estimates  \eqref{eq:second_estimate_nonlinear_term_C2}  for the second  term, \eqref{eq:second_estimate_nonlinear_term} for the third term, and \eqref{eq:second_estimate_nonlinear_term_U2} along with \eqref{eq:discreter_poincare} for the  fourth term. For $n \geq 1$, we obtain
\begin{align*}
 &W_1^n \leq  C \|\bm{e}_h^{n-1}\| \,  \vertiii{\bm{u}^n}  \,  \| \err \|_{\DG} 
+ C\tau \left( \sum_{e \in \Gamma_h} \frac{\tilde{\sigma}}{h_e} \| [\phi_h^{n-1}] \|_{L^2(e)}^2 \right)^{1/2}
\, \vertiii{\bm{u}^n} \, \|\err\|_{\DG} \nonumber \\ 
& + C  h^{k-d/4} \vert \bm{u}^n \vert_{H^{k+1}(\Omega)}\, \| \bm{e}_h^{n-1}\| \, \|\err\|_{\DG} 
+ C h^{k} \vert \bm{u}^n\vert_{H^{k+1}(\Omega)} \,\vertiii{\bm{u}^{n-1}} \,  \|\err\|_{\DG} \\ 
&  +  \frac{1}{2} \delta_{n,1} b(\bm{e}_h^{0}, (\Pi_h \bm{u}^1 - \bm{u}^1) \cdot \tilde{\bm{e}}_h^1).
\end{align*}
Further, for $k \geq 1$, with Morrey's inequality and a Sobolev embedding (see Theorem 1.4.6 in \cite{brenner2007mathematical} and Theorem 2 in section 5.6.1 \cite{evans2010partial}), we have for $n \geq 0$: 
\begin{equation}
\vertiii{\bm{u}^n} \leq C \Vert \bm{u}^n \Vert_{H^{k+1}(\Omega)} \leq C \Vert \bm{u} \Vert_{L^{\infty}(0,T;H^{k+1}(\Omega))}. \label{eq:bd_vertiii}
\end{equation}
With the help of \eqref{eq:bd_vertiii}, the assumption that $\bm{u}\in L^\infty(0,T;H^{k+1}(\Omega)^d)$, and Young's inequality, we obtain for constants $C$ and $C_1$: 
\begin{align}
&W^n_1  \leq  \frac{C}{\kappa\mu}   \| \bm{e}_h^{n-1}\|^2 +  \frac{C}{\kappa\mu}   h^{2k} + \left( \frac{\kappa\mu }{96} + C_1\tau \right) \| \err \|^2_{\DG} + \frac{\tau}{32} \sum_{e\in\Gamma_h} \frac{\tilde{\sigma}}{h_e} \| [\phi_h^n] \|^2_{L^2(e)}   \nonumber \\ &  + \frac{\tau}{8} \sum_{e\in\Gamma_h} \frac{\tilde{\sigma}}{h_e} \| [\phi_h^n - \phi_h^{n-1}] \|^2_{L^2(e)} +  \frac{1}{2} \delta_{n,1} b(\bm{e}_h^{0}, (\Pi_h \bm{u}^1 - \bm{u}^1) \cdot \tilde{\bm{e}}_h^1) \nonumber . 
\end{align}
With \eqref{eq:ident_cont_u} and \eqref{eq:first_nonlinear_estimate_0}, we have 
\begin{equation}
a_{\mathcal{C}}(\bm{u}_h^{0}; \bm{e}_h^0, \bm{u}^1, \tilde{\bm{e}}_h^1) \leq C\|\bm{e}_h^{0}\||\bm{u}^1|_{W^{1,3}(\Omega)} \|\tilde{\bm{e}}_h^1\|_{\DG} + \frac{1}{2}b(\bm{e}_h^0,  \bm{u}^1 \cdot \tilde{\bm{e}}_h^1). \nonumber 
\end{equation}
Since \eqref{eq:div_error_2} holds, we can apply \eqref{eq:first_estimate_nonlinear_form}  to bound $W^n_2$ for $n \geq 2$. Thus, we have 
\begin{multline*}
W^n_2\leq C  \|\bm{e}_h^{n-1}\| \, \vertiii{\bm{u}^n} \, \|\err\|_{\DG}
+ C\tau\left( \sum_{e \in \Gamma_h} \frac{\tilde{\sigma}}{h_e} \| [\phi_h^{n-1}] \|_{L^2(e)}^2 \right)^{1/2}
\,  \vertiii{\bm{u}^n}\,  \|\err\|_{\DG}  \\ + \frac{1}{2} \delta_{n,1} b(\bm{e}_h^{0},  \bm{u}^1 \cdot \tilde{\bm{e}}_h^1).
\end{multline*}
With \eqref{eq:bd_vertiii} and Young's inequality, we obtain  for constants $C$ and $C_2$
\begin{align}
        W^n_2 
       & \leq 
        \frac{C}{\kappa\mu}  \|\bm{e}_h^{n-1}\|^2 + \left( \frac{\kappa \mu}{96} + C_2 \tau \right)\|\err\|_{\DG}^2 +  \frac{\tau}{32}\sum_{e\in \Gamma_h} \frac{\tilde{\sigma}}{h_e} \|  [\phi_h^{n}]  \|_{L^2(e)}^2  \nonumber \\  & \quad    + \frac{\tau}{8}\sum_{e\in\Gamma_h} \frac{\tilde{\sigma}}{h_e} \| [\phi_h^n - \phi_h^{n-1}] \|^2_{L^2(e)} + \frac{1}{2} \delta_{n,1}b(\bm{e}_h^{0},  \bm{u}^1 \cdot \tilde{\bm{e}}_h^1) \nonumber .
    \end{align}
    To bound $W^n_3$, we split it into  
    \begin{equation} \label{eq:splitW3} 
    W^n_3 = a_\mathcal{C}(\bm{u}_h^{n-1};\Pi_h \bm{u}^{n-1} - \bm{u}^{n-1}, \bm{u}^n, \err) +  a_\mathcal{C}(\bm{u}_h^{n-1};\bm{u}^{n-1} - \bm{u}^{n}, \bm{u}^n, \err). \nonumber 
    \end{equation}
   Since $b(\bm{u}^{n-1}, q_h) = 0, \forall q_h \in M_h, \forall n \geq 1 $, we apply \eqref{eq:third_estimate_nonlinear_term} and the assumption that $\bm{u} \in L^{\infty}(0,T;H^{k+1}(\Omega)^d)$ to bound the first term. 
    \begin{align}
         |a_\mathcal{C}(\bm{u}_h^{n-1}; \Pi_h \bm{u}^n - \bm{u}^n, \bm{u}^n, \err)| & \leq Ch^{k+1} \vert \bm{u}^n\vert_{H^{k+1}(\Omega)} \vertiii{\bm{u
         }^n}\| \err \|_{\DG} \nonumber  \\ 
    & \leq  \frac{C}{\kappa\mu}  h^{2k+2}  + \frac{\kappa \mu}{96} \|\err \|_{\DG}^2.  \nonumber 
    \end{align}
Since $\bm{u}^{n-1}$ and $\bm{u}^n$ belong to $H_0^1(\Omega)^d$ and their divergence is zero, we have  
    \begin{align}
        a_\mathcal{C}(\bm{u}_h^{n-1};\bm{u}^{n-1} - \bm{u}^{n}, \bm{u}^n, \err) = \sum_{E \in \mesh_h} \int_{E} ((\bm{u}^{n-1} - \bm{u}^n) \cdot \nabla \bm{u}^n) \cdot  \err. \nonumber  
    \end{align}
    By H\"{o}lder's inequality, \eqref{eq:discreter_poincare}, and \eqref{eq:bd_vertiii}, we obtain  
    \begin{align}
         | a_\mathcal{C}(\bm{u}_h^{n-1};\bm{u}^{n-1} - \bm{u}^{n}, \bm{u}^n, \err) | &\leq  \| \bm{u}^{n-1} - \bm{u}^n \| \vert \bm{u}^n \vert_{W^{1,3}(\Omega)} \|\err \|_{\DG} \nonumber \\
        & \leq  \frac{C}{\kappa\mu}  \|\bm{u}^{n-1} - \bm{u}^n \|^2 |\bm{u}^n|^2_{W^{1,3}(\Omega)}+ \frac{\kappa\mu}{96} \| \err \|_{\DG}^2  \nonumber  \\ 
        &\leq \frac{C}{\kappa \mu} \tau  \int_{t^{n-1}}^{t^n} \|\partial_t \bm{u} \|^2 + \frac{\kappa\mu}{96} \| \err \|_{\DG}^2. \nonumber 
    \end{align}
    The result follows by combining all the above bounds, and by assuming  $(C_1+C_2) \tau \leq \kappa \mu/96$.
    \end{proof}
We are now ready to present and prove the main convergence result. 
\begin{theorem}\label{prep:first_error_estimate_velocity}
 Assume that $\sigma \geq M_{k-1}^2/d$,  $\tilde{\sigma} \geq 4 \tilde{M}_k^2$, and $\delta \leq \kappa/(2d)$. There exists a positive constant $\gamma_1$, independent of $h,\tau$ and $\mu$, such that if $\tau \leq \gamma_1 \mu$,  the following estimate holds.  For $0 \leq m \leq N_T$, 
    \begin{align}
        \|\bm{u}_h^{m} - \bm{u}^m\|^2 +  \frac{\kappa \mu}{4} \tau \sum_{n=1}^{m}  \|\bm{v}_h^n - \bm{u}^n\|_{\DG}^2 & \leq C_\mu \left( 1+  \mu + \frac{1}{\mu} \right)(  \tau + h^{2k}). \label{eq:first_error_estimate_0} 
        \end{align}
         The above estimate holds under the following regularity assumptions:

         $\bm{u} \in L^{\infty}(0,T; H^{k+1}(\Omega)^d)$, $\partial_t \bm{u} \in L^2(0,T ; H^{k}(\Omega)^d)$, $\partial_{tt} \bm{u} \in L^2(0,T; L^2(\Omega)^d)$, and $p \in L^{\infty}(0,T; H^k(\Omega))$. 
        \end{theorem}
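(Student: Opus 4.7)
The plan is to mirror the structure of the stability proof (Theorem \ref{prep:stability}), but applied to the error equations \eqref{eq:first_err_eq}--\eqref{eq:sec_err_eq} instead of the scheme itself. First, take $\bm{\theta}_h = \err$ in \eqref{eq:first_err_eq} and use positivity \eqref{eq:cpositivity} of $a_\mathcal{C}$ in its third argument together with coercivity \eqref{eq:coercivity_a_epsilon} of $a_\epsilon$ and the polarization identity to obtain
\begin{equation*}
\tfrac{1}{2}\bigl(\|\err\|^2 - \|\bm{e}_h^{n-1}\|^2 + \|\err - \bm{e}_h^{n-1}\|^2\bigr) + \kappa\mu\tau\|\err\|_{\DG}^2 \le \tau b(\err,\, p_h^{n-1} - p^n) - \tau R_\mathcal{C}(\err) - \tau\mu\, a_\epsilon(\Pi_h\bm{u}^n - \bm{u}^n, \err) + R_t(\err).
\end{equation*}
Next, take $\bm{\theta}_h = \bm{e}_h^n$ in \eqref{eq:sec_err_eq}, which by \eqref{eq:def_b_lift_2} and \eqref{eq:div_error_2} yields
$\tfrac{1}{2}(\|\bm{e}_h^n\|^2 - \|\err\|^2 + \|\bm{e}_h^n - \err\|^2) = \tau b(\bm{e}_h^n, \phi_h^n)$.
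Using \eqref{eq:lift_prop_g} and the hypothesis $\tilde{\sigma}\ge 4\tilde{M}_k^2$, this right-hand side can be shown to be bounded above by $-\tfrac{3\tau^2}{4}\sum_{e\in\Gamma_h}\tfrac{\tilde{\sigma}}{h_e}\|[\phi_h^n]\|_{L^2(e)}^2$, so that after summing the two identities we produce an energy inequality in which $\|\bm{e}_h^n\|^2 - \|\bm{e}_h^{n-1}\|^2$ and $\kappa\mu\tau\|\err\|_{\DG}^2$ are controlled by the discretization and consistency residuals.

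The core technical work is the pressure term. I would reintroduce the auxiliary $S_h^n$, $\xi_h^n$ from \eqref{eq:def_S}--\eqref{eq:def_xi} and set $\eta_h^n := \xi_h^n - \pi_h p^n \in M_{h0}$, then split
\begin{equation*}
b(\err,\, p_h^{n-1} - p^n) = b(\err,\, \eta_h^{n-1}) + b(\err,\, \pi_h p^n - p^n) + b(\err,\, \pi_h(p^{n-1} - p^n)) - b(\err,\, S_h^{n-1}).
\end{equation*}
The first term, via the error pressure-correction identity \eqref{eq:error_pressure_correction} and $\phi_h^n = \xi_h^n - \xi_h^{n-1} = \eta_h^n - \eta_h^{n-1} + \pi_h(p^n - p^{n-1})$, telescopes (using symmetry of $\aelip$) into $-\tfrac{\tau}{2}(\aelip(\eta_h^n,\eta_h^n) - \aelip(\eta_h^{n-1},\eta_h^{n-1}))$ plus cross terms involving $\pi_h(p^n - p^{n-1})$ that are bounded via Lemma \ref{lemma:stability_l2_projection} and Taylor expansion by $C\tau\int_{t^{n-1}}^{t^n}\|\partial_t p\|_{H^1}^2$. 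The consistency defects $b(\err, \pi_h p^n - p^n)$ and $b(\err, \pi_h(p^{n-1} - p^n))$ are controlled via \eqref{eq:l2_proj_approximation} and Young's inequality, producing $\mathcal{O}(h^{2k})$ or $\mathcal{O}(\tau^2)$ terms together with a small fraction of $\kappa\mu\tau\|\err\|_{\DG}^2$. The last term reproduces exactly \eqref{eq:second_term_final_form}: it contributes the $\tfrac{1}{2\delta\mu}(\|S_h^n\|^2 - \|S_h^{n-1}\|^2)$ telescope and an $\|S_h^n - S_h^{n-1}\|^2$ excess that is absorbed by \eqref{eq:bounding_Shn_stability_proof}, using $\sigma\ge M_{k-1}^2/d$ and $\delta\le\kappa/(2d)$.

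The remaining residuals are standard: $\tau R_\mathcal{C}(\err)$ is handled directly by Lemma \ref{lemma:bounds_Rnl}, the diffusion consistency $\tau\mu|a_\epsilon(\Pi_h\bm{u}^n - \bm{u}^n, \err)|$ by continuity \eqref{eq:continuity_a_epsilon} with \eqref{eq:approximation_prop_2} and Young's inequality, and the temporal residual $R_t(\err)$ by the Taylor remainder representation $\tau(\partial_t\bm{u})^n - (\bm{u}^n - \bm{u}^{n-1}) = \int_{t^{n-1}}^{t^n}(s - t^{n-1})\partial_{tt}\bm{u}\, ds$ combined with the $L^2$-stability of $\Pi_h$. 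After summing from $n=1$ to $m$, the $\eta_h$ telescope yields a non-negative $\tau\aelip(\eta_h^m,\eta_h^m)/2$ on the left (to be discarded), the $\|S_h^n\|^2$ telescope is bounded by $\|S_h^0\|^2 = 0$, the initial term $\bm{e}_h^0$ vanishes (up to $L^2$-projection error controlled by $\bm{u}^0\in H^1_0$ and the divergence-free hypothesis), and an $\mathcal{O}(\tau\|\bm{e}_h^{n-1}\|^2/\mu)$ contribution from $R_\mathcal{C}$ is closed by the discrete Gronwall lemma under the restriction $\tau\le\gamma_1\mu$. A final triangle inequality with \eqref{eq:approximation_prop_1}--\eqref{eq:approximation_prop_2} converts the bound on $\bm{e}_h^m$ and $\err$ into the stated estimate on $\bm{u}_h^m - \bm{u}^m$ and $\bm{v}_h^n - \bm{u}^n$. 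The main obstacle is the bookkeeping in the pressure-term decomposition: because the argument is written at the error level rather than the scheme level, one must simultaneously preserve the non-negative telescopes in both $\aelip(\eta_h^m,\eta_h^m)$ and $\|S_h^m\|^2$, ensure the consistency defects only contribute $\mathcal{O}(\tau + h^{2k})$, and balance all lift-operator cross terms against the available $\kappa\mu\tau\|\err\|_{\DG}^2$ dissipation without inflating the Gronwall constant beyond $e^{C/\mu}$.
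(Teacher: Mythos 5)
Your skeleton matches the paper's (test the error equations, reuse the $S_h^n,\xi_h^n$ machinery from the stability proof, invoke Lemmas \ref{lemma:bounds_Rnl} and the Gronwall argument), but three specific steps do not go through as described. First, the diffusion consistency term $\tau\mu\, a_\epsilon(\Pi_h\bm{u}^n-\bm{u}^n,\err)$ cannot be dispatched by ``continuity \eqref{eq:continuity_a_epsilon} with \eqref{eq:approximation_prop_2}'': that continuity estimate holds only on $\bm{X}_h\times\bm{X}_h$, and the face term $\sum_e\int_e\{\nabla(\Pi_h\bm{u}^n-\bm{u}^n)\}\bm{n}_e\cdot[\err]$ is not controlled by $\|\Pi_h\bm{u}^n-\bm{u}^n\|_{\DG}$ since no inverse inequality is available for the non-discrete argument. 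The paper isolates this as the term $Q_2$ and needs an auxiliary interpolant $\tilde{\bm{u}}^n$ with elementwise second-derivative control to get the face trace of $\nabla(\Pi_h\bm{u}^n-\bm{u}^n)$; some such argument is unavoidable. Second, your pressure decomposition telescopes $\aelip(\eta_h^n,\eta_h^n)$ with $\eta_h^n=\xi_h^n-\pi_h p^n$ and pays for the cross terms with $C\tau\int_{t^{n-1}}^{t^n}\|\partial_t p\|_{H^1}^2$; but $\partial_t p\in L^2(0,T;H^1(\Omega))$ is \emph{not} among the stated regularity assumptions. The paper avoids this by telescoping in $\xi_h^n$ alone and bounding $-b(\err,\pi_h p^n)=\tau\aelip(\phi_h^n,\pi_h p^n)\leq \frac{\tau}{8}|\phi_h^n|^2_{\DG}+C\tau|p^n|^2_{H^1(\Omega)}$, accepting an $O(\tau^2)$ loss per step that still fits the $O(\tau)$ budget — but this requires having $\frac{\tau^2}{4}|\phi_h^n|^2_{\DG}$ (gradient part included) as dissipation on the left.

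That is precisely the third gap: by testing \eqref{eq:sec_err_eq} only with $\bm{e}_h^n$, your energy identity produces only the jump seminorm $\sum_e\tilde{\sigma}h_e^{-1}\|[\phi_h^n]\|^2_{L^2(e)}$ on the left, not $\|\nabla_h\phi_h^n\|^2$, not $\aelip(\phi_h^n,\phi_h^n)$, and not $\sum_e\tilde{\sigma}h_e^{-1}\|[\phi_h^n-\phi_h^{n-1}]\|^2_{L^2(e)}$. The paper obtains all of these by additionally testing with $\bm{e}_h^n-\err$ and by expanding $\|\err-\bm{e}_h^{n-1}\|^2$ via the identity \eqref{eq:error_diff} of Lemma \ref{prep:weak_div_error} (which also generates the $\delta_{n,1}\,b(\bm{e}_h^0,\phi_h^1)$ correction you do not account for, since $\bm{e}_h^0\neq\bm{0}$). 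These missing left-hand-side terms are not cosmetic: the conclusion of Lemma \ref{lemma:bounds_Rnl} contains $\frac{1}{4}\tau\sum_e\tilde{\sigma}h_e^{-1}\|[\phi_h^n-\phi_h^{n-1}]\|^2_{L^2(e)}$, which after multiplication by $\tau$ must be absorbed by a matching $\frac{\tau^2}{4}$ term on the left; replacing it by $\frac{\tau^2}{2}(\|[\phi_h^n]\|^2+\|[\phi_h^{n-1}]\|^2)$-type bounds and summing in $n$ demands a coefficient larger than the $\frac{3\tau^2}{4}$ your identity supplies. So either you must reinstate the second test function and \eqref{eq:error_diff}, or you must re-derive Lemma \ref{lemma:bounds_Rnl} with different weights — as written, the absorption does not close.
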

        \begin{proof}
        Let $\bm{\theta}_h = \err$ in \eqref{eq:first_err_eq}. Using the positivity property of $a_\mathcal{C}$ \eqref{eq:cpositivity} and the coercivity property of $a_\epsilon$ \eqref{eq:coercivity_a_epsilon},  we have 
        \begin{multline}
        \frac{1}{2}(\|\err\|^2 - \|\bm{e}_h^{n-1}\|^2 +  \|\err - \bm{e}_h^{n-1}\|^2)+ \kappa \mu \tau  \|\err\|_{\DG}^2 + \tau R_{\mathcal{C}} (\err) \leq \tau b(\err, p_h^{n-1} - p^n) \\  - \tau \mu a_\epsilon(\Pi_h \bm{u}^n  - \bm{u}^n, \err ) + R_t(\err). \nonumber 
        \end{multline}
        Substituting \eqref{eq:error_diff} in the above expression, we obtain 
        \begin{equation}
            \begin{split}
                \frac{1}{2}(&\|\err\|^2 - \|\bm{e}_h^{n-1}\|^2 +  \|\bm{e}_h^n - \bm{e}_h^{n-1}\|^2 )  + \frac{\tau^2}{2}(\| \nabla_h \phi_h^n \|^2 + \|\bm{G}_h([\phi_h^n])\|^2)  \\ & + \kappa \mu \tau \|\err\|_{\DG}^2  + \frac{\tau^2}{2}(A^n_1 - A^n_2) + \frac{\tau^2}{2} \sum_{e\in\Gamma_h }  \frac{\tilde{\sigma}}{h_e} \| [\phi_h^n - \phi_h^{n-1}]\|_{L^2(e)}^2  \leq - \tau  R_{\mathcal{C}} (\err)\\  & + \tau b(\err, p_h^{n-1} - p^n) -  \tau \mu a_{\epsilon}(\Pi_h \bm{u}^n - \bm{u}^n ,\err )  +  R_t(\err)  + \frac{\tau^2}{2} \|\bm{G}_h([\phi_h^n - \phi_h^{n-1}])\|^2 \\  &  + \tau^2 (\nabla_h \phi_h^n, \bm{G}_h([\phi_h^n])) - \delta_{n,1} \tau b(\bm{e}_h^{0}, \phi_h^1) .\label{eq:conv_1}
            \end{split}
        \end{equation}
We begin by handling the fifth and sixth terms. With \eqref{eq:lift_prop_g} and the assumption that $\tilde{\sigma} \geq 2\tilde{M}_{k}^2$, we have: 
\begin{multline}
 \|\bm{G}_h([\phi_h^n - \phi_h^{n-1}])\|^2  \leq  \sum_{e\in \Gamma_h} \frac{\tilde{M}_k^2}{h_e} \| [\phi_h^{n} - \phi_h^{n-1}]\|^2_{L^2(e)} \\ \leq  \frac{1}{2}\sum_{e\in \Gamma_h} \frac{\tilde{\sigma}}{h_e} \| [\phi_h^{n} - \phi_h^{n-1}]\|^2_{L^2(e)} .  \label{eq:bound_A3}
\end{multline}
    Using Cauchy-Schwarz's inequality,  Young's inequality, \eqref{eq:lift_prop_g}, and the assumption that $\tilde{\sigma} \geq 4\tilde{M}_k^2$, we obtain 
        \begin{multline}
         |(\nabla_h \phi_h^n , \bm{G}_h([\phi_h^n ]))|  
             \leq \frac{1}{4}\| \nabla_h \phi_h^n\|^2 + \| \bm{G}_h([\phi_h^n])\|^2 \\ \leq \frac{1}{4}\| \nabla_h \phi_h^n\|^2 + \sum_{e\in\Gamma_h} \frac{\tilde{M}_k^2}{h_e} \|[\phi_h^n] \|^2_{L^2(e)}    \leq  \frac{1}{4}\| \nabla_h \phi_h^n\|^2 + \frac{1}{4} \sum_{e\in\Gamma_h} \frac{\tilde{\sigma}}{ h_e} \|[\phi_h^n] \|^2_{L^2(e)}. 
             \label{eq:bound_A4}
        \end{multline}
         Next, let $\bm{\theta}_h = \bm{e}_h^n$ in \eqref{eq:sec_err_eq} and use \eqref{eq:def_b_lift_2}. 
        \begin{equation}
        \frac12 \left( \|\bm{e}_h^n\|^2 - \|\err\|^2 \right) + \frac12 \|\bm{e}_h^n - \err \|^2 = \tau b(\bm{e}_h^{n}, \phi_h^n). 
        \end{equation}
        Using \eqref{eq:div_error_2}, we obtain 
        \begin{equation}
        \frac12 \left( \|\bm{e}_h^n\|^2 - \|\err\|^2 \right) + \frac12 \|\bm{e}_h^n - \err \|^2 + \tau^2 \sum_{e\in\Gamma_h} \frac{\tilde{\sigma}}{h_e} \|[\phi_h^n] \|_{L^2(e)}^2 = \tau^2 \|\bm{G}_h([\phi_h^n]) \|^2. \label{eq:conv_2_step1}
        \end{equation}
        Similar to the stability proof, we let $\bm
        {\theta}_h = \bm{e}_h^{n} - \err$ in \eqref{eq:sec_err_eq} and use \eqref{eq:div_error}: 
        \begin{multline}
        \| \bm{e}_h^n - \err\|^2 = \tau b(\bm{e}_h^n - \err, \phi_h^n)= \tau^2 \aelip(\phi_h^n,\phi_h^n) \\  -  \tau^2 \sum_{e\in\Gamma_h} \frac{\tilde{\sigma}}{h_e} \|[\phi_h^n] \|_{L^2(e)}^2 + \tau^2 \|\bm{G}_h([\phi_h^n]) \|^2. 
        \end{multline}
        Hence, \eqref{eq:conv_2_step1} reads 
        \begin{multline}
            \frac12 \left( \|\bm{e}_h^n\|^2 - \|\err\|^2 \right) + \frac{\tau^2}{2} \aelip(\phi_h^n,\phi_h^n) \\ + \frac{\tau^2}{2} \sum_{e\in \Gamma_h} \frac{\tilde{\sigma}}{h_e} \|[\phi_h^n] \|^2_{L^2(e)} = \frac{\tau^2}{2} \|\bm{G}_h([\phi_h^n])\|^2.  \label{eq:conv_2}
        \end{multline}
        We add \eqref{eq:conv_2} to \eqref{eq:conv_1}. With bounds \eqref{eq:bound_A3}  and  \eqref{eq:bound_A4}, we obtain
        \begin{equation}
            \begin{aligned}                
                &\frac{1}{2}(\|\bm{e}_h^n\|^2 - \|\bm{e}_h^{n-1}\|^2+ \|\bm{e}_h^n - \bm{e}_h^{n-1}\|^2)  + \frac{\tau^2}{4} \vert \phi_h^n \vert_{\DG}^2  +\frac{\tau^2}{2}\aelip(\phi_h^n, \phi_h^n) + \kappa \mu \tau \|\err\|_{\DG}^2 \\ & + \frac{\tau^2}{2}(A^n_1 - A^n_2) + \frac{\tau^2}{4} \sum_{e\in\Gamma_h }  \frac{\tilde{\sigma}}{h_e} \| [\phi_h^n - \phi_h^{n-1}]\|_{L^2(e)}^2 \leq - \tau R_{\mathcal{C}}(\err) + \tau b(\err, p_h^{n-1} - p^n) \\ & -\tau \mu a_{\epsilon}(\Pi_h \bm{u}^n - \bm{u}^n ,\err )  +  R_t(\err) - \delta_{n,1} \tau  b(\bm{e}_h^{0}, \phi_h^1). \label{eq:conv_add}
            \end{aligned}
        \end{equation}
    We now focus on the second term on the right hand side of \eqref{eq:conv_add}. We write 
        $$  b(\err, p_h^{n-1} - p^n) = b(\err, p_h^{n-1}) - b(\err, \pi_h p^n) + b(\err, \pi_h p^n - p^n).  $$ 
         Since the operator $\pi_h$ preserves cell averages and $p^n$ satisfies \eqref{eq:zero_avg}, we know that $\pi_h p^n \in M_{h0}$.  By \eqref{eq:error_pressure_correction}, continuity of $\aelip(\cdot, \cdot)$ on $M_h \times M_h$ see \eqref{eq:continuity_aellip}, Young's inequality, and \eqref{eq:bd_phN}, we obtain 
        \begin{multline} \label{eq:bounding_b_zh}
           | -b(\err, \pi_h p^n)| = \tau| \aelip(\phi_h^n, \pi_h p^n)| \leq C\tau \vert \phi^n_h\vert_{\DG} \vert \pi_h p^n\vert_{\DG}\\ \leq \frac{\tau}{8} |\phi_h^n|_{\DG}^2 + C\tau |p^n |_{H^1(\Omega)}^2. 
        \end{multline}
        Since $\nabla_h \cdot \err \in M_h$, by the definition of $\pi_h p^n$ we have:  
        \begin{align*}
      b(\err, \pi_h p^n - p^n ) = - \sum_{e\in \Gamma_h \cup \partial \Omega} \int_{e} \{ \pi_h p^n - p^n \} [\err]\cdot \bm{n}_e. 
         \end{align*}
    By using trace inequalities and \eqref{eq:l2_proj_approximation}, we obtain 
        \begin{align}
         |b(\err, \pi_h p^n - p^n ) |  \leq  \frac{C}{ \mu }  h^{2k} \vert p^n\vert_{H^{k}(\Omega)}^2 + \frac{\kappa \mu}{16} \|\err \|_{\DG}^2.\label{eq:bd_b2}
        \end{align}
        Thus, with bounds \eqref{eq:bounding_b_zh} and \eqref{eq:bd_b2}, \eqref{eq:conv_add} becomes: 
        \begin{equation}
            \begin{aligned}                
                &\frac{1}{2}(\|\bm{e}_h^n\|^2 - \|\bm{e}_h^{n-1}\|^2 +\|\bm{e}_h^n - \bm{e}_h^{n-1}\|^2)  + \frac{\tau^2}{8} \vert \phi_h^n \vert_{\DG}^2  +\frac{\tau^2}{2}\aelip(\phi_h^n, \phi_h^n) + \kappa  \mu \tau  \|\err\|_{\DG}^2   \\ & + \frac{\tau^2}{2}(A^n_1 - A^n_2) + \frac{\tau^2}{4} \sum_{e\in\Gamma_h }  \frac{\tilde{\sigma}}{h_e} \| [\phi_h^n - \phi_h^{n-1}]\|^2_{L^2(e)} \leq - \tau R_{\mathcal{C}} (\err) + \tau b(\err, p_h^{n-1}) \\ &   + C\tau^2 \vert p^n\vert _{H^1(\Omega)}^2 + \frac{C}{ \mu} \tau  h^{2k} \vert p^n\vert_{H^{k}(\Omega)}^2 - \tau \mu a_{\epsilon}(\Pi_h \bm{u}^n - \bm{u}^n ,\err)   +  \frac{\kappa \mu }{16} \tau  \|\err \|_{\DG}^2 \\ & + R_t(\err) -\delta_{n,1} \tau  b(\bm{e}_h^{0}, \phi_h^1).     \label{eq:error_eq_semi_final}  
            \end{aligned}
        \end{equation}
    Note that by \eqref{eq:def_pi_h}, $b(\err, p_h^{n-1}) = b(\bm{v}_h^n ,p_h^{n-1})$. Hence, from \eqref{eq:rhs_final_form}, we have 
        \begin{multline}
            b(\err, p_h^{n-1}) = -\frac{\tau}{2} \left( a_{\mathrm{ellip}}(\xi_h^n, \xi_h^n ) -  a_{\mathrm{ellip}}(\xi_h^{n-1},\xi_h^{n-1}) - a_{\mathrm{ellip}}(\phi_h^n,\phi_h^n)\right) \\ - \frac{1}{2\delta \mu} \left(\|S_h^n \|^2 - \|S_h^{n-1}\|^2 - \|S_h^n - S_h^{n-1}\|^2 \right).
            \label{eq:rhs_final_form_err} 
        \end{multline}  
        With this expression, \eqref{eq:error_eq_semi_final} becomes  
            \begin{equation}
                \begin{aligned}
            &\frac{1}{2}(\|\bm{e}_h^n\|^2 - \|\bm{e}_h^{n-1}\|^2 + \|\bm{e}_h^n - \bm{e}_h^{n-1}\|^2 ) + \frac{\tau^2}{2}(\aelip(\xi_h^n, \xi_h^n) - \aelip(\xi_h^{n-1},\xi_h^{n-1})) \\ &+ \frac{\tau^2}{8} \vert \phi_h^n \vert_{\DG}^2 +  \kappa  \mu \tau \|\err\|_{\DG}^2 + \frac{\tau^2}{4} \sum_{e\in\Gamma_h }  \frac{\tilde{\sigma}}{h} \| [\phi_h^n - \phi_h^{n-1}]\|^2_{L^2(e)}+ \frac{\tau^2}{2}(A^n_1 - A^n_2)   \\ &+ \frac{\tau}{2\delta\mu}(\|S_h^n\|^2- \|S_h^{n-1}\|^2)   \leq    - \tau \mu a_{\epsilon}(\Pi_h \bm{u}^n - \bm{u}^n , \err )- \tau  R_{\mathcal{C}} (\err)+ C\tau^2 \vert p^n\vert _{H^1(\Omega)}^2  \\ 
&    + \frac{\tau}{2\delta \mu} \|S_h^n -S_h^{n-1}\|^2 +  \frac{C}{\mu} \tau h^{2k} \vert p^n\vert_{H^{k}(\Omega)}^2 + \frac{\kappa \mu}{16} \tau\|\err \|_{\DG}^2 + R_t(\err) \\
& -\delta_{n,1} \tau b(\bm{e}_h^{0}, \phi_h^1).   \label{eq:error_eq_semi_final_2}
            \end{aligned}
        \end{equation} 
        We now consider $R_t(\err)$ (see \eqref{eq:defRt}). Using Cauchy-Schwarz's inequality, Young's inequality and  \eqref{eq:discreter_poincare},  we obtain 
        \begin{align}
            |R_t(\err)| & =\left| \tau((\partial_t \bm{u})^n, \err ) +  (\Pi_h\bm{u}^{n-1} - \Pi_h \bm{u}^{n},\err) \right|  \nonumber   \\
        & \leq \frac{C}{\mu} \tau^{-1} \| \tau (\partial_t \bm{u})^n - (\bm{u}^n - \bm{u}^{n-1}) \|^2 \nonumber \\ & \quad + \frac{C}{\mu}\tau^{-1} \|(\Pi_h \bm{u}^{n-1} - \bm{u}^{n-1}) - (\Pi_h \bm{u}^n - \bm{u}^n )\|^2  +  \frac{\kappa  \mu}{16}\tau \| \err \|^2_{\DG}. \nonumber  
        \end{align}
        With a Taylor expansion and approximation property \eqref{eq:approximation_prop_1}, we have
        \begin{multline}
             |R_t(\err)|   \leq  \frac{C}{\mu}   \tau^2 \int_{t^{n-1}}^{t^n} \| \partial_{tt} \bm{u}\|^2  + \frac{C}{\mu}   \int_{t^{n-1}}^{t^n} \|\partial_t  (\Pi_h\bm{u} - \bm{u})\|^2  + \frac{\kappa  \mu}{16} \tau \| \err \|^2_{\DG}  \\
             \leq \frac{C}{\mu}    \tau^2 \int_{t^{n-1}}^{t^n} \| \partial_{tt} \bm{u}\|^2   
     + \frac{C}{\mu} h^{2k} \int_{t^{n-1}}^{t^n} \vert \partial_t  \bm{u}\vert_{H^{k}(\Omega)}^2 + \frac{\kappa  \mu}{16} \tau\| \err \|^2_{\DG}. \label{eq:bound_on_Rt}  
        \end{multline}
        With the definition of $S^n_h$ \eqref{eq:def_S} and \eqref{eq:equivalent_def_Pi_h}, we have 
        \begin{equation}
            S_h^n - S_h^{n-1} = \delta \mu (\nabla_h \cdot \err - R_h([\err])). 
        \end{equation}
        With the assumptions that $\delta \leq \kappa/(2d)$ and $\sigma \geq M_{k-1}^2/d$ and with \eqref{eq:lift_prop_r}, we have 
        \begin{align}
        \frac{1}{2\delta\mu}\| S_h^n - S_h^{n-1}\|^2 &\leq \delta \mu \| \nabla_h \cdot \err \|^2 +  \delta \mu  \| R_h([\err])\|^2  \nonumber \\
            & \leq \delta \mu d \| \nabla_h \err\|^2 + \delta \mu M^2_{k-1} \sum_{e\in\Gamma_h \cup \partial \Omega} h_e^{-1}\|[\err]\|_{L^2(e)}^2 \nonumber \\ 
            & \leq \frac{\kappa  \mu}{2}  \| \nabla_{h} \err \|^2 + \frac{\kappa \mu }{2}  \sum_{e\in \Gamma_h \cup \partial \Omega} \sigma h_e^{-1} \| [\err] \|^2_{L^2(e)}. \label{eq:bd_tildeS}
        \end{align}
        To handle $a_{\epsilon} (\Pi_h \bm{u}^n - \bm{u}^n, \err )$, we write: 
        \begin{align*}
         & a_{\epsilon} (\Pi_h \bm{u}^n - \bm{u}^n, \err ) =  \sum_{E \in \mesh_h} \int_{E} \nabla(\Pi_h \bm{u}^n - \bm{u}^n) : \nabla \err   \\ & - \sum_{e\in\Gamma_h \cup \partial \Omega} \int_{e} \{\nabla(\Pi_h \bm{u}^n - \bm{u}^n )\} \bm{n}_e\cdot [\err] 
  + \epsilon \sum_{e\in \Gamma_h \cup \partial \Omega} \int_e \{ \nabla \err\}\bm{n}_e \cdot [\Pi_h \bm{u}^n - \bm{u}^n] \\ &+ \sum_{e\in \Gamma_h \cup \partial \Omega} \frac{\sigma}{h_e}\int_e [\Pi_h \bm{u}^n - \bm{u}^n]\cdot [\err]  = Q_1 +Q_2 +Q_3 +Q_4.
    \end{align*}
    The terms $Q_1,Q_3$ and $Q_4$ are handled via standard arguments. By using trace inequalities, we have 
        \begin{equation}
        |Q_1| +|Q_3| +|Q_4| \leq \frac{\kappa}{32}  \|\err \|^2_{\DG} + C h^{2k} \vert \bm{u}^n \vert_{H^{k+1}(\Omega)}^2. \label{eq:bd_W_1_W_3}
        \end{equation} 
        The term $Q_2$ is more delicate. We follow an approach similar to the one in Chapter 6 of \cite{riviere2008discontinuous} and \cite{girault2005discontinuous}. By Cauchy-Schwarz's inequality, we have 
        \begin{align}
        |Q_2| \leq Ch^{1/2}\left(\sum_{e\in\Gamma_h \cup \partial \Omega} \|\{ \nabla(\Pi_h \bm{u}^n - \bm{u}^n)\} \|_{L^2(e)}^2\right)^{1/2} \| \err\|_{\DG}. \label{eq:bd_W2}
        \end{align}
Consider one element $E_e^1$ adjacent to $e$. By the trace theorem, we have 
\begin{multline}
\| \nabla(\Pi_h \bm{u}^n - \bm{u}^n) \vert_{E_e^1}\|_{L^2(e)} \leq Ch_{E_e^1}^{-1/2} (\|\nabla(\Pi_h \bm{u}^n - \bm{u}^n) \|_{L^2(E_e^1)} \\ + h_{E_e^1} \|\nabla^2 (\Pi_h \bm{u}^n - \bm{u}^n)\|_{L^2(E_e^1)}). \label{eq:edge_grad}
\end{multline}
Let $\tilde{\bm{u}}^n \in \bm{X}_h$ be an interpolant of $\bm{u}^n$, see Theorem 2.6 in \cite{riviere2008discontinuous}, satisfying:
\begin{equation}
\|\nabla (\bm{u}^n - \tilde{\bm{u}}^n )\|_{L^2(E)} + h_E \|\nabla^2(\bm{u}^n -\tilde{\bm{u}}^n)\|_{L^2(E)} \leq Ch_{E}^{k}\vert \bm{u}^n \vert_{H^{k+1}(E)}, \quad \forall E \in \mesh_h.  \nonumber
\end{equation}
By the triangle inequality, an inverse estimate, the above bound and \eqref{eq:approximation_prop_2}, we obtain: 
\begin{align*}
& \|\nabla^2 (\Pi_h \bm{u}^n - \bm{u}^n)\|_{L^2(E_e^1)} \leq \|\nabla^2 (\Pi_h \bm{u}^n - \tilde{\bm{u}}^n)\|_{L^2(E_e^1)} + \| \nabla^2 (\tilde{\bm{u}}^n - \bm{u}^n)\|_{L^2(E_e^1)} \\ 
& \leq  Ch_{E_e^1}^{-1} \| \nabla (\Pi_h \bm{u}^n - \tilde{\bm{u}}^n)\|_{L^2(E_e^1)} +C h_{E_e^1}^{k-1}\vert \bm{u}^n \vert_{H^{k+1}(E_e^1)}
\leq C h_{E_e^1}^{k-1} \vert \bm{u}^n \vert_{H^{k+1}(E_e^1)}.
\end{align*}
Hence, \eqref{eq:edge_grad} becomes 
\begin{align}
\| \nabla(\Pi_h \bm{u}^n - \bm{u}^n)\vert_{E_e^1}\|_{L^2(e)} \leq Ch_{E_e^1}^{k-1/2} \vert \bm{u}^n \vert_{H^{k+1}(E_e^1)}. 
\end{align}
The term involving the neighboring element $E_e^2$ is handled similarly.
Thus, we can conclude that 
\begin{equation}
|Q_2| \leq \frac{\kappa}{32}  \|\err \|^2_{\DG} + C h^{2k} \vert \bm{u}^n \vert_{H^{k+1}(\Omega)}^2. \label{eq:bd_W_2} \end{equation}
We employ the above bounds on $Q_1$--$Q_4$, the bound on the nonlinear terms $| R_{\mathcal{C}} (\err)|$ \eqref{eq:bound_nonlinear_term}, \eqref{eq:bd_tildeS}, and the bound on $|R_t(\err)|$ \eqref{eq:bound_on_Rt} in \eqref{eq:error_eq_semi_final_2}. With these bounds and the assumptions that $\bm{u} \in L^{\infty}(0,T;H^{k+1}(\Omega)^d)$ and $p \in L^{\infty}(0,T;H^k(\Omega))$, \eqref{eq:error_eq_semi_final_2} becomes:  
\begin{equation}
\begin{aligned}
    &\frac{1}{2}(\|\bm{e}_h^n\|^2 - \|\bm{e}_h^{n-1}\|^2 + \|\bm{e}_h^n - \bm{e}_h^{n-1}\|^2 ) + \frac{\tau^2}{2}(\aelip(\xi_h^n, \xi_h^n) - \aelip(\xi_h^{n-1}, \xi_h^{n-1}))\\& + \frac{\tau^2}{16} \vert \phi_h^n \vert_{\DG}^2 + \frac{\kappa  \mu}{4} \tau \|\err\|_{\DG}^2 +  \frac{\tau^2}{2}(A^n_1 - A^n_2) + \frac{\tau}{2\delta\mu}(\|S_h^n\|^2- \| S_h^{n-1}\|^2)   \\ &  \leq  \frac{C}{\mu}  \tau^2 \int_{t^{n-1}}^{t^n} (\|\partial_t \bm{u} \|^2 + \|\partial_{tt} \bm{u} \|^2) +  \frac{C}{\mu} h^{2k} \int_{t^{n-1}}^{t^n} \vert \partial_t \bm{u} \vert^2_{H^{k}(\Omega)}   +  C\left( \mu + \frac{1}{\mu}\right)\tau h^{2k} \\ & + \frac{C}{\mu} \tau \|\bm{e}_h^{n-1}\|^2 + C\tau^2  -\delta_{n,1} \tau  b(\bm{e}_h^{0}, \phi_h^1) + \frac12\delta_{n,1}\tau | b(\bm{e}_h^{0}, \Pi_h \bm{u}^1 \cdot \tilde{\bm{e}}_h^1)|. \label{eq:error_eq_semi_final_3}
    \end{aligned}
\end{equation} 
We sum \eqref{eq:error_eq_semi_final_3} from $n = 1$ to $n = m$, multiply by $2$, and use the regularity assumptions. We  obtain 
         \begin{multline}
             \|\bm{e}_h^m\|^2 + \sum_{n=1}^m \|\bm{e}_h^n - \bm{e}_h^{n-1} \|^2 + \tau^2\aelip(\xi_h^m, \xi_h^m) + \frac{\tau^2}{8} \sum_{n=1}^m \vert \phi_h^n \vert^2_{\DG}+\frac{\kappa  \mu}{2} \tau   \sum_{n=1}^{m}  \|\err\|_{\DG}^2  \\  +\tau^2 \sum_{n=1}^{m} (A^n_1 - A^n_2)  + \frac{\tau}{\delta \mu} \|S^m_h\|^2 \leq  C\left(1 + \mu+\frac{1}{\mu}\right)( \tau + h^{2k}) +  \frac{C}{\mu} \tau \sum_{n=0}^{m-1} \| \bm{e}_h^n \|^2 \\ +\|\bm{e}_h^0\|^2 + 2\tau|b(\bm{e}_h^0, \phi_h^1)| + \tau |b(\bm{e}_h^0, \Pi_h \bm{u}^1 \cdot \tilde{\bm{e}}_h^1) |.
             \label{eq:error_eq_semi_final_4}
         \end{multline}  
        We recall that  $\phi_h^{0} = 0$ and the definitions of $A_1^n$ and $A_2^n$ (see \eqref{eq:def_A1} - \eqref{eq:def_A2}). We use \eqref{eq:lift_prop_g} with the assumption that $\tilde{\sigma} \geq  \tilde{M}_k^2$ to obtain 
        \begin{align}
        \sum_{n=1}^{m} (A^n_1 - A^n_2) = &\sum_{e\in\Gamma_h} \frac{\tilde{\sigma}}{h_e} \|[\phi_h^{m}]\|^2_{L^2(e)}  - \|\bm{G}_h([\phi_h^{m}])\|^2 \nonumber  \\ 
         &\geq  \sum_{e\in \Gamma_h} \frac{\tilde{\sigma} - \tilde{M}_k^2}{h_e}\|[\phi_h^{m}]\|^2_{L^2(e)} \geq 0.  \nonumber
        \end{align}
        With \eqref{eq:def_b_lift_2}, \eqref{eq:lift_prop_g}, and approximation properties we have 
        \begin{equation*}
            |b(\bm{e}_h^0, \phi_h^1)| \leq C\|\bm{e}_h^0\||\phi_h^1|_{\DG} \leq C\tau^{-1}h^{2k+2}|\bm{u}^0|_{H^{k+1}(\Omega)}^2 + \frac{\tau}{16} |\phi_h^1|_{\DG}^2. 
        \end{equation*}
        With H\"{o}lder's inequality, trace inequality, stability of the interpolant, and \eqref{eq:discreter_poincare}, we obtain
        \begin{multline*}      
            |b(\bm{e}_h^0, \Pi_h \bm{u}^1 \cdot \tilde{\bm{e}}_h^1)| \leq C\| \nabla_h \bm{e}_h^0 \| \|\Pi_h \bm{u}^1\|_{L^3(\Omega)}\|\tilde{\bm{e}}_h^1\|_{\DG}  + \\ 
           C \sum_{e \in \Gamma_h \cup \partial \Omega} \sum_{i=1}^2 \|\Pi_h \bm{u}^1 \|_{L^3(E_e^i)} \|\tilde{\bm{e}}_h^1\|_{L^6(E_e^i)}h_{E_e^i}^{-1/2} \|[\bm{e}_h^0]\|_{L^2(e)} \\ \leq C \|\bm{e}_h^0\|_{\DG} \|\bm{u}^1\|_{W^{1,3}(\Omega)}\|\tilde{\bm{e}}_h^1\|_{\DG} \leq \frac{C}{\kappa \mu}h^{2k}\|\bm{u}^1\|_{H^{k+1}(\Omega)}^2 + \frac{\kappa \mu }{4}\|\tilde{\bm{e}}_h^1\|_{\DG}^2 . 
        \end{multline*}
        The above bounds, approximation properties,  the coercivity property of $\aelip$ \eqref{eq:coercivity_a_ellip}, discrete Gronwall's inequality, and the triangle inequality yield bound  \eqref{eq:first_error_estimate_0}. 
    \end{proof}
      Two immediate results of the above arguments are as follows. Under the same regularity assumptions of Theorem \ref{prep:first_error_estimate_velocity} and for $\tau \leq \gamma_1 \mu$ and for $ 1\leq m \leq N_T$, 
    \begin{multline} 
        \sum_{n=1}^m \| \bm{e}_h^{n} - \bm{e}_h^{n-1} \|^2 + \frac{1}{16}\tau^2  \sum_{n=1}^m \vert \phi_h^n \vert_{\DG}^2 + \frac{\kappa \mu}{4} \tau \sum_{n=1}^m  \|\err \|_{\DG}^2 \\ \leq C_\mu \left(1 + \mu+\frac{1}{\mu}\right)( \tau + h^{2k}).  \label{eq:first_error_estimate}
    \end{multline}
    In addition, with \eqref{eq:update_velocity} and \eqref{eq:lift_prop_g}, we have 
\begin{multline} 
\|\bm{v}_h^n - \bm{u}^n \|^2 \leq 2 \|\bm{v}_h^n - \bm{u}_h^n\|^2 + 2 \|\bm{u}_h^n - \bm{u}^n\|^2 \\  \leq C \tau^2 |\phi_h^n|^2_{\DG}  + 2 \|\bm{u}_h^n - \bm{u}^n\|^2 \leq C_\mu \left(1 + \mu+\frac{1}{\mu}\right)( \tau + h^{2k}).
\end{multline}
\begin{remark}
Assume that the mesh is quasi-uniform. If we use \eqref{eq:update_velocity} and the following inverse inequality. $$ \| \bm{\theta}_h\|_{\DG} \leq Ch^{-1} \| \bm{\theta}_h\|, \quad \forall \bm{\theta}_h \in \bm{X}_h, $$
we obtain the following
    \begin{align*}
        \mu \tau \sum_{n=1}^m \| \bm{u}_h^n - \bm{u}^n \|_{\DG}^2 &\leq  \mu \tau \sum_{n=1}^m \|\bm{v}_h^n - \bm{u}^n \|_{\DG}^2 +  \mu \tau \sum_{n=1}^m \tau^2 \| \nabla_h \phi_h^n + \bm{G}_h([\phi_h^n]) \|_{\DG}^2 \nonumber  \\
        & \leq C_\mu \left(1 + \mu+\frac{1}{\mu}\right)( \tau + h^{2k})+ C\mu \tau h^{-2} \sum_{n=1}^m \tau^2 \vert \phi_h^n \vert_{\DG}^2.  
    \end{align*}
Therefore, under a CFL condition, $\tau \leq \gamma_2 h^{2}$, we have the following error bound:
    \begin{align}
     \mu \tau \sum_{n=1}^m \|\bm{u}_h^n - \bm{u}^n \|^2_{\DG} \leq C_\mu \left(1 + \mu+\frac{1}{\mu}\right)( \tau + h^{2k}) . \label{eq:error_estimate_dg_uh}
    \end{align}        
\end{remark}

\section{Numerical experiments}
In this section,  we use the manufactured solution method to compute the convergence rates. Let $\Omega=(0,1)^3$ denote the computational domain and 
let $T = 1$ be the end time for all numerical experiments. The domain $\Omega$ is partitioned into cubic elements. We select the following Beltrami flow with parameter $\mu=1$ as the prescribed solution.
\begin{align*}
u_x(x,y,z,t) &= -\exp{(-t+x)}\sin{(y+z)}-\exp{(-t+z)}\cos{(x+y)},\\
u_y(x,y,z,t) &= -\exp{(-t+y)}\sin{(x+z)}-\exp{(-t+x)}\cos{(y+z)},\\
u_z(x,y,z,t) &= -\exp{(-t+z)}\sin{(x+y)}-\exp{(-t+y)}\cos{(x+z)},\\
p(x,y,z,t)   &= -\exp{(-2t)}\Big(\exp(x+z)\sin{(y+z)}\cos{(x+y)}\\
&\hspace{2.075cm}+\exp(x+y)\sin{(x+z)}\cos{(y+z)}\\
&\hspace{2.075cm}+\exp(y+z)\sin{(x+y)}\cos{(x+z)}\\
&\hspace{2.075cm}+\frac{1}{2}\exp{(2x)} + \frac{1}{2}\exp{(2y)} + \frac{1}{2}\exp{(2z)}
-\bar{p}\Big).
\end{align*}
Here $\bar{p}=7.63958172715414$ is used to force the average pressure over $\Omega$ equal to zero (up to machine precision). The Beltrami flow has the property that the velocity and vorticity vectors are parallel to each other and that the nonlinear convection term is balanced by the pressure gradient, namely, $\partial_t{\bm{u}} - \Delta{\bm{u}} = \bm{0}$ and $\bm{u}\cdot\nabla{\bm{u}} + \nabla{p} = \bm{0}$, which implies $\bm{f} = \bm{0}$. The initial condition and Dirichlet boundary condition on $\partial{\Omega}$ for velocity are imposed by the exact solution.
\par
First, we derive temporal rates of convergence by computing the solutions with time step size $\tau \in \{1/2^3, 1/2^4, ..., 1/2^7\}$. The $\mathbbm{P}2-\mathbbm{P}1$ dG scheme with mesh resolution $h_e = 1/160$ 
is employed to guarantee the error in space does not dominate the error in time. We set $\epsilon = -1$, $\sigma = 64$ on $\Gamma_h$ and $\sigma = 128$ on $\partial\Omega$. The other penalty parameter $\tilde{\sigma}$ is set equal to $2$. If $\mathtt{err}_\tau$ denotes the error when time step size is equal to $\tau$, the convergence rate is defined by $\ln(\mathtt{err}_\tau/\mathtt{err}_{\tau/2})/\ln(2)$. \Cref{tab:time_rate} displays the errors and convergence rates for the velocities and pressure. We observe second order convergence rate in time, which is higher than the expected first order rate.
\par
Next, we obtain spatial rates of convergence by computing the solutions on a sequence of uniformly refined meshes (see the second column of \Cref{tab:space_rate} for $h_e$). We fix $\tau = 1/2^{10}$ for the $\mathbbm{P}1-\mathbbm{P}0$ dG scheme, we fix $\tau = 1/2^{13}$ for the $\mathbbm{P}2-\mathbbm{P}1$ dG scheme, and we fix $\tau = 1/2^{15}$ for the $\mathbbm{P}3-\mathbbm{P}2$ dG scheme to guarantee the spatial error dominates. For $\mathbbm{P}1-\mathbbm{P}0$, we set $\tilde{\sigma} = 1$, $\epsilon = -1$, $\sigma = 8$ on $\Gamma_h$ and $\sigma = 16$ on $\partial\Omega$. For $\mathbbm{P}2-\mathbbm{P}1$, we set $\tilde{\sigma} = 2$, $\epsilon = -1$, $\sigma = 64$ on $\Gamma_h$ and $\sigma = 128$ on $\partial\Omega$. For $\mathbbm{P}3-\mathbbm{P}2$, we set $\tilde{\sigma} = 8$, $\epsilon = -1$, $\sigma = 128$ on $\Gamma_h$ and $\sigma = 256$ on $\partial\Omega$. If  $\mathtt{err}_{h_e}$ is the error on a  mesh with resolution $h_e$, then the rate is defined by $\ln(\mathtt{err}_{h_e}/\mathtt{err}_{h_e/2})/\ln(2)$. We show the errors and rates in \Cref{tab:space_rate}.  The convergence rates are optimal.
\par
In all numerical experiments, the values $\|\bm{v}_h^{N_T}-\bm{u}(T)\|$ and $\|\bm{u}_h^{N_T}-\bm{u}(T)\|$ are identical for at least nine digits.
%
\begin{table}[ht!]
\centering
\begin{tabularx}{\linewidth}{@{~}c@{~}|@{~}c@{~}|C@{\hspace{-0.6666em}}C@{\hspace{-0.6666em}}|C@{\hspace{-0.6666em}}C@{\hspace{-0.6666em}}|C@{\hspace{-0.6666em}}C@{\hspace{-0.6666em}}}
\toprule
$k$ & $\tau$ & $\|\bm{v}_h^{N_T}\!-\bm{u}(T)\|$ & rate & $\|\bm{u}_h^{N_T}\!-\bm{u}(T)\|$ & rate & $\|p_h^{N_T}\!-p(T)\|$ & rate\\
\midrule
$2$ & $1/2^3$ & $1.911$\,E$-2$ & ---     & $1.911$\,E$-2$ & ---     & $2.196$\,E$-1$ & ---    \\
$~$ & $1/2^4$ & $6.437$\,E$-3$ & $1.570$ & $6.437$\,E$-3$ & $1.570$ & $1.025$\,E$-1$ & $1.099$\\
$~$ & $1/2^5$ & $1.699$\,E$-3$ & $1.922$ & $1.699$\,E$-3$ & $1.922$ & $3.093$\,E$-2$ & $1.729$\\
$~$ & $1/2^6$ & $4.506$\,E$-4$ & $1.915$ & $4.506$\,E$-4$ & $1.915$ & $7.790$\,E$-3$ & $1.989$\\
$~$ & $1/2^7$ & $1.207$\,E$-4$ & $1.901$ & $1.207$\,E$-4$ & $1.901$ & $1.919$\,E$-3$ & $2.021$\\
\bottomrule
\end{tabularx}
\caption{Errors and temporal convergence rates of velocity and pressure.}
\label{tab:time_rate}
\end{table}
\begin{table}[ht!]
\centering
\begin{tabularx}{\linewidth}{@{~}c@{~}|@{~}c@{~}|C@{\hspace{-0.6666em}}C@{\hspace{-0.6666em}}|C@{\hspace{-0.6666em}}C@{\hspace{-0.6666em}}|C@{\hspace{-0.6666em}}C@{\hspace{-0.6666em}}}
\toprule
$k$ & $h_e$ & $\|\bm{v}_h^{N_T}\!-\bm{u}(T)\|$ & rate & $\|\bm{u}_h^{N_T}\!-\bm{u}(T)\|$ & rate & $\|p_h^{N_T}\!-p(T)\|$ & rate\\
\midrule
$1$ & $1/2^3$  & $2.849$\,E$-3$ & ---     & $2.849$\,E$-3$ & ---     & $5.255$\,E$-2$ & ---    \\
$~$ & $1/2^4$  & $7.204$\,E$-4$ & $1.984$ & $7.204$\,E$-4$ & $1.984$ & $2.152$\,E$-2$ & $1.288$\\
$~$ & $1/2^5$  & $1.815$\,E$-4$ & $1.989$ & $1.815$\,E$-4$ & $1.989$ & $9.752$\,E$-3$ & $1.142$\\
$~$ & $1/2^6$  & $4.572$\,E$-5$ & $1.989$ & $4.572$\,E$-5$ & $1.989$ & $4.706$\,E$-3$ & $1.051$\\
$~$ & $1/2^7$  & $1.215$\,E$-5$ & $1.912$ & $1.215$\,E$-5$ & $1.912$ & $2.330$\,E$-3$ & $1.014$\\
\midrule
$2$ & $1/2^1$  & $6.322$\,E$-3$ & ---     & $6.322$\,E$-3$ & ---     & $1.925$\,E$-1$ & ---    \\
$~$ & $1/2^2$  & $7.874$\,E$-4$ & $3.005$ & $7.874$\,E$-4$ & $3.005$ & $2.274$\,E$-2$ & $3.082$\\
$~$ & $1/2^3$  & $9.717$\,E$-5$ & $3.019$ & $9.717$\,E$-5$ & $3.019$ & $3.850$\,E$-3$ & $2.562$\\
$~$ & $1/2^4$  & $1.198$\,E$-5$ & $3.020$ & $1.198$\,E$-5$ & $3.020$ & $8.130$\,E$-4$ & $2.244$\\
$~$ & $1/2^5$  & $1.502$\,E$-6$ & $2.996$ & $1.502$\,E$-6$ & $2.996$ & $1.918$\,E$-4$ & $2.084$\\
\midrule
$3$ & $1/2^0$  & $5.129$\,E$-3$ & ---     & $5.129$\,E$-3$ & ---     & $5.021$\,E$-1$ & ---    \\
$~$ & $1/2^1$  & $4.272$\,E$-4$ & $3.586$ & $4.272$\,E$-4$ & $3.586$ & $4.376$\,E$-2$ & $3.521$\\
$~$ & $1/2^2$  & $2.692$\,E$-5$ & $3.988$ & $2.692$\,E$-5$ & $3.988$ & $3.690$\,E$-3$ & $3.568$\\
$~$ & $1/2^3$  & $1.611$\,E$-6$ & $4.063$ & $1.611$\,E$-6$ & $4.063$ & $3.170$\,E$-4$ & $3.541$\\
$~$ & $1/2^4$  & $1.066$\,E$-7$ & $3.918$ & $1.066$\,E$-7$ & $3.918$ & $2.920$\,E$-5$ & $3.441$\\
\bottomrule
\end{tabularx}
\caption{Errors and spatial convergence rates of velocity and pressure.}
\label{tab:space_rate}
\end{table}

\section{Conclusion} \label{sec:conclusions}
The main contribution of this work is a theoretical error analysis of a dG pressure correction scheme for solving the incompressible Navier-Stokes equations. We establish stability of the method
and derive a priori error estimates for the discrete velocity that are optimal in the broken gradient norm. The convergence analysis is technical and relies on lift operators and appropriate local approximations. Future work will consist of deriving error bounds for the pressure and 
optimal error estimates for the discrete velocity in the L$^2$ norm.
\bibliographystyle{amsplain}
\bibliography{references}

\end{document}